\documentclass[11pt]{amsart}
\topmargin=0truecm \oddsidemargin=1truecm \evensidemargin=1truecm
\textheight=22cm
\textwidth=15cm
\pagestyle{plain}
\usepackage{parskip}
\usepackage{amsfonts,amsmath,amssymb,bbm}
\usepackage{dsfont}
\usepackage{verbatim,mathrsfs}
\usepackage{stmaryrd}
 \usepackage{setspace}
\usepackage{color}
\usepackage[dvipsnames]{xcolor}
\usepackage{hyperref}

\usepackage{enumerate}
\usepackage{pdfsync}
\usepackage{color}
\setcounter{secnumdepth}{3}%
\setcounter{tocdepth}{1}%
\usepackage{etoolbox}
\makeatletter
\renewcommand\subsection{\@startsection{subsection}{2}%
	\z@{.5\linespacing\@plus.7\linespacing}{-.5em}%
	{\normalfont\bfseries}}
\makeatother
\makeatletter
\renewcommand\subsubsection{\@startsection{subsubsection}{3}%
	\z@{.5\linespacing\@plus.7\linespacing}{-.5em}%
	{\sffamily\bfseries}}
\makeatother
\numberwithin{equation}{section}
\newtheorem{theorem}{Theorem}[section]
\newtheorem{proposition}[theorem]{Proposition}
\newtheorem{lemma}[theorem]{Lemma}
\newtheorem{corollary}[theorem]{Corollary}
\theoremstyle{definition}
\newtheorem{definition}[theorem]{Definition}

\theoremstyle{remark}
\newtheorem{remark}[theorem]{Remark}
\usepackage{color}

\newcommand{\spnu}[2]{\langlerangle{#1}_{#2}}

\newcommand{\esD}{\mathbf{D}}

\newcommand{\esC}{\mathbf{C}}

\newcommand{\ZZ}{\mathbb{Z}}

\renewcommand{\limsup}{\mathop{\overline{\lim}}\limits}

\renewcommand{\r}{{\sigma^2_1}}

\newcommand{\lin}{\llbracket}
\newcommand{\rin}{\rrbracket}

\newcommand{\lb}{\left (}
\newcommand{\rb}{\right )}

\newcommand{\langlerangle}[1]{\left<#1\right>}


 \renewcommand{\L}{\mathbf{L}}


    
    \def\P{{\mathbb P}}

    \def\E{{\mathbb E}}
    \def\N{{\mathbb N}}
    \def\re{\mathtt{Re}}
    \def\im{\mathtt{Im}}
    \def\i{\textnormal {i}}
    
    \def\R{{\mathbb R}}

\newcommand{\eab}{{\nu_\beta}}

\newcommand{\meKp}{{\mathbf{n}_{\phi}}}
\newcommand{\meK}{{\mathbf{n}}}

\newcommand{\cont}[1]{{#1}}

\newcommand{\Mphi}{\mathds{I}_{\phi}}
\newcommand{\Mphiad}{\widehat{\mathds{I}}_{\phi}}

\newcommand{\eigLc}{\mathcal{P}^{{\phi}}}
\newcommand{\eigdLc}{\mathrm{V}^{{\phi}}}

\newcommand{\eigLd}{\mathsf{P}^{\phi}_{\!k}}
\newcommand{\eigLdd}{\mathsf{V}^{\phi}}
\newcommand{\eigL}{\mathsf{P}_{\!k}}

\newcommand{\SLp}{\mathds{K}^{\phi}}
\newcommand{\SL}{\mathds{K}}

\newcommand{\SLpd}{\widehat{\mathds{K}}^{\phi}}

\newcommand{\SLcp}{\cont{K}^{\phi}}
\newcommand{\SLcpd}{\widehat{\cont{K}}^{\phi}}

\newcommand{\Ladp}{\widehat{\Lambda}_{\phi}}

\newcommand{\Lnu}{{\L^{2}(\nu_{\phi})}}
\newcommand{\lnu}{{\ell}^{2}(\meKp)}

\newcommand{\poly}{\mathcal{P}}

\newcommand{\Be}{\mathbf{B}}


\newcommand{\cP}{{\mathcal P}}

\newcommand{\PP}{\mathbb{P}}

\newcommand{\RR}{\mathbb{R}}

\renewcommand{\ZZ}{\mathbb{Z}}

\newcommand{\iy}{\infty}

\newcommand{\wi}{\widehat}

\newcommand{\Ent}{\mathrm{Ent}}

\newcommand{\bq}{\begin{eqnarray*}}
\newcommand{\bqn}[1]{\begin{eqnarray}\hlabel{#1}}
\newcommand{\eq}{\end{eqnarray*}}
\newcommand{\eqn}{\end{eqnarray}}

\renewcommand{\lin}{\llbracket}
\renewcommand{\rin}{\rrbracket}

\newcommand{\ttsim}{\raise.17ex\hbox{$\scriptstyle\mathtt{\sim}$}}
\newcommand{\esP}{\mathbf{P}_{\!\mathfrak{e}}}

\newcommand{\dBg}{{\mathds G}}

\newcommand{\bb}{\mathbb}
\newcommand{\cc}{\mathcal}

\newcommand{\tred}{\textcolor{red}}

\newcommand{\nab}{\nabla}

\newcommand{\ovl}{\overline}

\newcommand{\ep}{\epsilon}
\newcommand{\var}{\operatorname{Var}}
\newcommand{\ent}{\operatorname{Ent}}

\newcommand{\mf}{\mathfrak}
\newcommand{\bbm}{\mathbbm}
\newcommand{\ttt}{\mathtt}
\newcommand{\bmrm}[2][2]{\mathbf{#2}^{#1}}

\newcommand{\Span}{\operatorname{Span}}
\newcommand{\Range}{\operatorname{Range}}

\newcommand{\pp}{\partial_+}
\newcommand{\pn}{\partial_-}
\newcommand{\eQ}{\mathds{Q}}

\newcommand{\dX}{\mathds{X}}
\newcommand{\lf}{\lfloor}
\newcommand{\rf}{\rfloor}
\newcommand{\f}{\mathsf{f}}
\newcommand{\g}{\mathsf{g}}
\newcommand{\eK}{\mathds{K}}
\newcommand{\eKad}{\wi{\mathds{K}}}
\newcommand{\eL}{\mathds{L}}

\newcommand{\dD}{\mathds{D}}
\newcommand{\BBB}{\mathscr{B}}

\newcommand{\lnp}{\ell^2(\meKp)}
\newcommand{\eLm}{\eL^{\phi_\mf{m}}}

\newcommand{\coeig}{\mathsf{V}}
\newcommand{\p}{\mathsf{p}}
\newcommand{\lnt}{\frac{1}{2}\log\left(1+\sigma^{-2}\right)}
\newcommand{\rpn}{(1+\sigma^{-2})^{-\frac{k}{2}}}
\newcommand{\rpnn}{(1+\sigma^{-1}_1)^{-\frac{k}{2}}}
\newcommand{\rpp}{(1+\sigma^{-2})^{\frac{k}{2}}}
\newcommand{\rppp}{(1+\sigma^{-1}_1)^{\frac{k}{2}}}
\newcommand{\rs}{{\sigma_1}}
\newcommand{\Spec}{\mathrm{Spec}}

\newcommand{\hlabel}{\phantomsection\label}
\renewcommand{\tred}{}

\addtocontents{toc}{\setcounter{tocdepth}{1}}

\title{Discrete self-similar and ergodic Markov chains}

\author{Laurent Miclo$^\dag$}\thanks{$^\dag$ Funding from the  grant ANR-17-EURE-0010 is acknowledged.}
\address{Toulouse School of Economics\\
Université de Toulouse and CNRS, France }
\email{miclo@math.cnrs.fr}

\author{Pierre Patie}\thanks{The authors are very grateful to the anonymous referees for the careful reading of the manuscript and their constructive comments.}
\address{School of Operations Research and Information Engineering, Cornell University, Ithaca, NY 14853.}
\email{	pp396@cornell.edu}

\author{Rohan Sarkar}
\address{School of Operations Research and Information Engineering, Cornell University, Ithaca, NY 14853.}
\email{rs2466@cornell.edu}

\setcounter{tocdepth}{2}
\usepackage{hyperref}
\begin{document}
\maketitle
\begin{abstract}
The first aim of this paper is to  introduce a class of  Markov chains on $\bb{Z}_+$ which are discrete self-similar in the sense that their semigroups satisfy an  invariance property expressed in terms of a discrete random dilation operator. After showing that this latter property requires the chains to be upward skip-free, we first establish a gateway relation, a concept introduced in \cite{miclo_patie},  between the semigroup of such chains and the one of  spectrally negative self-similar Markov processes on $\bb{R}_+$. As a by-product, we prove that each of these Markov chains, after an appropriate scaling, converge  in the Skorohod metric, to the associated self-similar Markov process. By a linear perturbation of the generator of these Markov chains, we obtain a class of ergodic Markov chains, which are non-reversible.  By means of intertwining and interweaving relations, where the latter was recently introduced in \cite{miclo_patie_2},  we derive several deep analytical properties of such ergodic chains including the description of the spectrum, the spectral expansion of their semigroups, the study of their convergence to equilibrium in the $\Phi$-entropy sense as well as their hypercontractivity property.
\end{abstract}
\keywords{Keywords: Discrete self-similarity, Markov chains, generalized Meixner polynomials,  intertwining, non-reversible, spectral theory, ergodicity constants, convergence to equilibrium, hypercontractivity.
\\ \small\it 2010 Mathematical Subject Classification: 41A60, 47G20, 33C45, 47D07, 37A30, 60J27, 60J60}
\vspace{1cm}

\tableofcontents
\newpage

\section{Introduction}
Self-similar processes are ubiquitous in the theory of Markov processes and they have been studied intensively over the last three decades, both from theoretical and applied perspectives. A Markov process on $\bb{R}_+$ is called self-similar of index $1$ if for all $\alpha>0$, one has the following commutation type relation
\begin{align}\hlabel{eq:scale}
Q_td_\alpha=d_\alpha Q_{\alpha t}
\end{align}
 where $Q=(Q_t)_{t\geq0}$ is the semigroup associated with the process and $d_\alpha f(x)=f(\alpha x)$ is the dilation operator, that satisfies the semigroup property $d_\alpha d_\beta=d_{\alpha \beta}$ for all $\alpha,\beta >0$. Motivated by limit theorems, Lamperti \cite{Lamperti-72} obtained  a complete characterization of these processes.

  In this paper, we first aim at introducing  continuous-time Markov processes with state space the set of all nonnegative integers that also enjoy a scaling type property. Naturally, one cannot expect \eqref{eq:scale} to hold in this setting, because the set of integers is not stable by the dilation operators as defined above. However, in \cite{miclo_patie}, the authors introduced the following signed Binomial kernel defined by
 \[\dD_\alpha \f(n)=\sum_{k=0}^n\dbinom{n}{k}\alpha^k(1-\alpha)^{n-k}\f(k)\] which resembles the dilation operator through the multiplicative semigroup property $\dD_{\alpha \beta}=\dD_\alpha\dD_\beta$ for all $\alpha,\beta >0$, which will be proved in Proposition \ref{prop:D_alpha} below. Furthermore, they showed that the linear birth-death Markov chain, see Remark \ref{rem:lbd} below for definition, satisfies the following commutation type relation \[\eQ_t\dD_\alpha=\dD_\alpha\eQ_{\alpha t}\] where $\eQ$ is the associated semigroup. Motivated from this result, we introduce a class of continuous-time Markov chains on $\ZZ_+$ that satisfy the scaling property as above and are upward skip-free, that is, at any instant the Markov chains do not jump more than one step above and name them  \emph{discrete self-similar Markov chains}, see Definition~\ref{def:disc_selfsim}. This class of  Markov chains, to the best of our knowledge, have not been identified before. Moreover, we want to understand their connections  with self-similar Markov processes. To this end, we resort to  intertwining relationship between Markov processes. More specifically, for two Markov semigroups $P$ and $Q$, we say that they are intertwined if, for all $t\ge 0$, \[P_t\Lambda =\Lambda Q_t \] for some linear operator $\Lambda$. Note that when the underlying processes have different state spaces, one lattice and the other one continuous, we use the terminology gateway relation, coined in \cite{miclo_patie}, to emphasize the unexpected two-sided connection between the two worlds. The term duality is also used in a fast growing and fascinating literature on this topic related to differential operators arising in statistical mechanics, see e.g.~\cite{Assi, CF, Gr, Redig,JK} and references therein. More generally,  the concept of intertwining relation goes back to Dynkin \cite{dynkin:1965} who used it to construct new Markov semigroups from a reference one. These ideas were extended by Rogers and Pitman in \cite{Pitman-Rogers-81},
leading to the characterization of Markov functions; that is, measurable maps that preserve the Markov property. With the help of the intertwining relationship, we prove the Feller property of the discrete self-similar Markov chains, see Theorem~\ref{thm:gateway_main}, and obtain the spectrally negative self-similar Markov processes as the scaling limit of these Markov chains, see Theorem~\ref{thm:gateway_main}\eqref{it:scaling_lim}. The use of intertwining relations to prove limit theorems is not new and, in fact, a general framework was built up  by Borodin and Olshanski \cite{borodin}, where they apply it to construct a class of Markov chains on the Thoma cone. Unfortunately, their strategy is not applicable in our situation because their conditions are too stringent for us, namely the set of finitely supported functions are not invariant with respect to the discrete self-similar Markov semigroups. Nonetheless, still resorting to the intertwining relation, we are able to derive explicit formulas for the moments of these Markov chains and we identify their scaling limits by the method of moments. We emphasize that there are many instances of the appearance of positive self-similar Markov processes as the scaling limits of  models, such as coalescence-fragmentation processes, see Bertoin \cite{bertoin_fc}, random planar maps, see Le Gall and Miermont \cite{miermont}. We also mention the recent paper by Bertoin and Kortchemski \cite{bertoin_igor} where the authors introduce a  class of discrete-time Markov chains whose appropriate scaling limits are positive self-similar Markov processes.
It appears that our work offers another class of Markov chains in the domain of attraction of such self-similar  Markov processes, with the additional surprising feature that the connection between the two objects goes, thanks to the gateway relation, in both directions.

We proceed by introducing another  class of ergodic Markov chains which are obtained by a linear first order perturbation of  the generators of the discrete self-similar Markov chains. We name them  \textit{skip-free Laguerre} chains. The motivation behind this comes from the fact that their continuous analogue are the generalized Laguerre processes, studied in \cite{Patie-Savov-GeL}, which are also constructed by perturbation of the generator of self-similar processes by a linear convection term, that is a  first order differential operator with a linear coefficient.  We show that they generate a class of Feller semigroups of ergodic Markov chains  which intertwine  with the class of the generalized Laguerre semigroups.  Using this connection, we develop the spectral theory, including the spectrum and the eigenvalues expansions, in the Hilbert space $\ell^2$ of nonnegative integers weighted with the invariant distributions $\meKp$ of the semigroups of these non-reversible chains.
As by-product, and under some mild conditions, we prove compactness and also obtain a hypercoercivity estimate for the $\ell^2(\meKp)$ convergence to equilibrium,  which is given explicitly as a perturbed spectral gap inequality. This part involves a deep theory of non-self-adjoint operators as developed in \cite{Patie-Savov-GeL}, see Section~\ref{ss:hilbert} for more details.

We continue our analysis of these skip-free Laguerre semigroups by investigating the entropy decay to equilibrium as well as the hypercontractivity property. For self-adjoint Markov semigroups, these two phenomena are equivalent to the (modified) log-Sobolev inequalities. Unfortunately, in our context, this relation fails due to the non-self-adjointness of the semigroups. However, resorting to the idea of interweaving relation,  introduced recently in \cite{miclo_patie_2}, we relate the skip-free Laguerre semigroups with the self-adjoint diffusion Laguerre semigroups and deduce, up to some universal random time, both the entropy decay and the hypercontractivity. Finally, showing that this random time is infinitely divisible, we develop a thorough analysis  of the skip-free Laguerre semigroups subordinated with the associated subordinator, which generate a class of ergodic Markov chains with two-sided jumps, for which all the results described above are obtained explicitly.

The remaining part of the paper is organized as follows. Most of the frequently used notations are defined in Section~\ref{s:notations} while Section~\ref{s:main} contains all the main results of the paper. We provide some examples in Section~\ref{s:exm} and Section~\ref{s:proofs} is devoted to the proofs of the main results. Some aspects of spectral theory for non-self-adjoint operators have been reviewed in Subsection~\ref{ss:hilbert} and the results related to interweaving relations have been proved in Subsection~\ref{ss:interweaving}.
\subsection{Notations and Preliminaries}\hlabel{s:notations}
For any locally compact topological space $E$ we write $\esC(E)$, $\esC_b(E)$, $\esC_c(E)$ and $\esC_0(E)$ to denote the class of continuous functions \tred{(the set of all functions when $E=\mathbb{Z}_+$)}, class of all bounded continuous functions, class of all compactly supported continuous functions and class of all continuous functions vanishing at infinity on $E$ respectively. 
In addition, when $E=\bb{R} \text{ or } \bb{R}_+$, we write $\esC^\infty_b(E)$ to denote the class of all bounded smooth functions with bounded derivatives on $E$.

Next, for any nonnegative sigma-finite measure ${\mu}$ on $\bb{R}_+$ and $p\in [1,\infty]$, $\mathbf{L}^p({\mu})$ denotes the $L^p$ space with weight ${\mu}$. When $p=2$, the corresponding Hilbert space is endowed with the inner product denoted by $\langle f,g \rangle_{\mu}=\int_{\RR_+} f(x)\overline{g(x)}{\mu}(dx)$. When ${\mu}$ is the $\text{Lebesgue measure}$, we simply write $\mathbf L^2({\mu})=\mathbf L^2(\bb{R}_+)$ associated with the inner product $\langle\cdot,\cdot\rangle$. If the underlying space is the set of all integers $\ZZ_+$, then for any nonnegative discrete measure $\mathbf{m}$ on $\ZZ_+$, we write $\ell^p(\mathbf{m})$ to denote the weighted $\ell^p$ space on $\ZZ_+$ and for $p=2$, the inner product is written as $\langle \f,\g \rangle_{\mathbf{m}}=\sum_{n\in\ZZ_+}\f(n)\g(n)\mathbf{m}(n)$. When $\mathbf{m}$ is the counting measure, we use the notation $\ell^2(\ZZ_+)=\ell^2(\mathbf{m})$. For any measurable function $f\ge 0$ or $f\in\bmrm[1]{L}(E,{\mu})$, we write ${\mu} f=\int_E fd{\mu}$.

 For any two Banach spaces $\mathbf B_1, \mathbf B_2$, $\mathscr{B}(\mathbf B_1,\mathbf B_2)$ denotes the set of all bounded linear operators defined from $\mathbf B_1$ to $\mathbf B_2$. Finally, for any operator $A$ (possibly unbounded) defined on some Banach space, $\mathbf{D}(A)$ denotes the domain of the operator and we represent the operator as $(A,\mathbf{D}(A))$ and in case of Hilbert spaces, we denote the adjoint of $A$ by $\wi{A}$.

 We denote the complex plane by $\bb{C}$ and for any $z\in\bb{C}$, $\re(z),\im(z)$ denote the real and imaginary part of $z$ respectively. Next, for any $S\subset\bb{R}$, we write $\bb{C}_S=\{z\in\bb{C};\: \re(z)\in S\}$. In particular, when $S=\bb{R}_+$ (resp.~$\bb{R}_-$), we simply write $\bb{C}_+$ (resp.~$\bb{C}_-$).

For two functions $f,g$ defined on the real line, we use the following notation.
\begin{eqnarray*}
	f&\asymp &g  \text{ means that $\exists c>0$ such that, for all $x$,  $c^{-1}\le\frac{f(x)}{g(x)}\le c$} \\
	f&\overset{a}{\sim}& g \text{ means that $\lim_{x\to a}\frac{f(x)}{g(x)}=1$ for some $a\in[0,\infty]$} \\
	f(x)&\overset{a}{=}&\mathrm{O}(g(x)) \text{ means that $\limsup_{x\to a}\left|\frac{f(x)}{g(x)}\right|<\infty$}\\
	f(x)&\overset{a}{=}&\mathrm{o}(g(x)) \text{ means that $\lim_{x\to a}\frac{f(x)}{g(x)}=0$}.
\end{eqnarray*}
\section{Main Results}\hlabel{s:main}
\subsection{Discrete dilation and discrete self-similar Markov chains} We start by introducing a transformation on $\esC(\ZZ_+)$, which we name  the \emph{discrete dilation operator}.
For any $\alpha>0$ and $\f\in\esC(\ZZ_+)$, we define
\begin{align}\hlabel{eq:disc_dilation}
	\dD_\alpha\mathsf{f}(n)=\sum_{r=0}^n \dbinom{n}{r}\alpha^r(1-\alpha)^{n-r}\mathsf{f}(r).
\end{align}
It should be noted that $\dD_\alpha$ is well defined on $\esC(\ZZ_+)$ for all $\alpha\ge 0$ and it is a Markov kernel when $\alpha\in [0,1]$. When $\alpha>1$, $\dD_\alpha\f$ may not be bounded even if $\f$ is bounded. For instance, taking $\f(n)=(-1)^n$, for any $n\in\ZZ_+$, we have $|\dD_\alpha\f(n)|=(2\alpha-1)^n$, which grows exponentially with respect to $n$. The operator $\dD$ shares the multiplicative semigroup property with the dilation operator, that is, for all $\alpha,\beta>0$, we have $\dD_{\alpha\beta}=\dD_\alpha\dD_\beta$, see Proposition~\ref{prop:D_alpha} below. Next, we introduce the discrete self-similar Markov chains which are defined in terms of the operator $\dD_\alpha$.
\begin{definition}\hlabel{def:disc_selfsim}
	We say that the semigroup $\mathds{Q}=(\mathds{Q}_t)_{t\geq0}$ of a continuous-time Markov chain $\mathds{X}$  with state space $\ZZ_+$ is \emph{discrete self-similar} if for all $t\ge 0, \alpha\in [0,1]$,  the following identity
	\begin{align}\hlabel{eq:disc_selfsim}
		\mathds{Q}_t\dD_\alpha=\dD_\alpha\mathds{Q}_{\alpha t}
	\end{align}
holds on  $\esC_b(\ZZ_+)$.

	In terms of the law of the Markov chain $\mathds X=(\mathds X(t,n), n\in \ZZ_+)_{t\geq0}$, where $ \mathds X(t,n)$ means that it is issued from $n$, the discrete self-similarity can be interpreted by the following identity in distribution, for any $\alpha\in [0,1]$, $t\geq0$ and $n\in\ZZ_+$,
	\begin{equation}\hlabel{eq:disc_selfsim-M}
		\ttt B(\mathds X(t,n),\alpha) \stackrel{(d)}{=}\mathds X(\alpha t,\ttt B(n,\alpha))
	\end{equation}
	where $\ttt B(n,\alpha)$ is a Binomial random variable with parameter $n$ and $\alpha$, and $\mathds X(t,\ttt B(n,\alpha))$) is the chain at time $t$ with initial law the one of $\ttt B(n,\alpha)$.
\end{definition}
Next, we consider the class of triplets $(m,\sigma^2,\Pi)$ such that $m,\sigma^2\ge 0$ and $\Pi$ is a non-negative measure on $\bb{R}_+$ that satisfies
\begin{align}\hlabel{eq:levy}
	\int_0^\infty (y\wedge y^2)\Pi(dy)<\infty,
\end{align}
that is $\Pi$ is a L\'evy measure with a finite first moment away from $0$.
To each of these triplets, we associate  the so-called Bernstein function defined as
\begin{align}\hlabel{eq:bernstein_def}
	\phi(u)=m+\sigma^2u+\int_0^\infty(1-e^{-uy})\ovl{\Pi}(y)dy
\end{align}
where $\ovl{\Pi}(y)=\Pi(y,\infty)$ is the tail of the measure $\Pi$. Let $\Be$ denote the class of all functions of the form \eqref{eq:bernstein_def}.

We are now ready to introduce a class of discrete operators on $\ZZ_+$, which is the central object of this paper. For any $\phi\in\Be$ associated with the triplet $(m,\sigma^2,\Pi)$ and $\f\in\esC_c(\ZZ_+)$, we define
\begin{align}\hlabel{eq:skip_free_gen}
	\dBg_\phi\mathsf{f}(n)=\sigma^2 n(\partial_++\partial_-)\mathsf{f}(n)+(m+\sigma^2)\pp\mathsf{f}(n)+\dBg_\Pi\mathsf{f}(n)
\end{align}
where $\partial_\pm\f(n)=\f(n\pm 1)-\f(n)$ for all $n\in\ZZ_+$ and
\begin{align}
\dBg_\Pi\f(n)=\frac{1}{n+1}\int_0^\infty\left[\dD_{e^{-y}}\f(n+1)-\f(n+1)+y(n+1)\partial_+\f(n)\right]\Pi(dy).
\end{align}
We are now ready to state our first main result.
\begin{theorem}\hlabel{feller_skip}
 The operator $(\dBg_\phi,\mathbf{C}_c(\ZZ_+))$ generates a Feller Markov chain on $\ZZ_+$, denoted by $\dX_\phi=(\mathds X_{\phi}(t,n), n\in \ZZ_+)_{t\geq0}$ which is  self-similar, and $\mathbf{C}_c(\ZZ_+)$ serves as a core for $\dBg_\phi$.
\end{theorem}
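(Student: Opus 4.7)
The plan is to verify that $\dBg_\phi$ encodes a conservative upward skip-free Q-matrix, deduce non-explosion, upgrade the associated minimal chain to a Feller semigroup, and finally establish the discrete self-similarity at the infinitesimal level.

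First I would rewrite $\dBg_\phi\f(n)$ in the canonical form $\sum_{k\in\ZZ_+}q_\phi(n,k)(\f(k)-\f(n))$. Expanding the Binomial definition of $\dD_{e^{-y}}\f(n+1)-\f(n+1)$ and grouping with $\sigma^2n(\partial_++\partial_-)\f(n)+(m+\sigma^2)\partial_+\f(n)$ produces explicit non-negative rates: a downward rate to each $k\le n-1$ built from $\sigma^2n\,\mathbf{1}_{k=n-1}$ and $\frac{1}{n+1}\binom{n+1}{k}\int_0^\infty e^{-ky}(1-e^{-y})^{n+1-k}\Pi(dy)$, and an upward rate to $n+1$ assembled from $\sigma^2n+m+\sigma^2$ together with the positive contribution absorbing the compensator. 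Upward skip-freeness $q_\phi(n,k)=0$ for $k>n+1$ is manifest. The row sum is finite because near $y=0$ the bracket $\dD_{e^{-y}}\f(n+1)-\f(n+1)+y(n+1)\partial_+\f(n)$ is $O(y^2)$, the first-order expansion of $\dD_{e^{-y}}$ exactly cancelling the compensator, while at large $y$ it is $O(y)$, so the integral converges by \eqref{eq:levy}. Since the upward rate $q_\phi(n,n+1)$ grows only linearly in $n$, the standard non-explosion criterion for upward skip-free chains yields a well-defined conservative minimal Markov chain $\dX_\phi$.

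Next I would promote this chain to a Feller semigroup on $\esC_0(\ZZ_+)$. Stochastic comparison with a pure-birth chain having at most linearly growing rates, together with upward skip-freeness, implies $\dX_\phi(t,n)\to+\infty$ in probability as $n\to\infty$, whence $\eQ^\phi_t\f(n)\to 0$ for $\f\in\esC_c(\ZZ_+)$ and $\eQ^\phi_t\esC_0(\ZZ_+)\subset\esC_0(\ZZ_+)$. Strong continuity at zero follows from $\dBg_\phi\f\in\esC_b(\ZZ_+)$ for $\f\in\esC_c(\ZZ_+)$ combined with Kolmogorov's backward equation, and Hille--Yosida then yields that $\esC_c(\ZZ_+)$ is a core since it is dense in $\esC_0(\ZZ_+)$ and invariant up to uniform approximation under the resolvents. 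To establish the self-similarity \eqref{eq:disc_selfsim}, I would verify the infinitesimal commutation $\dBg_\phi\dD_\alpha\f=\alpha\,\dD_\alpha\dBg_\phi\f$ on $\esC_c(\ZZ_+)$ for every $\alpha\in(0,1]$ and then deduce \eqref{eq:disc_selfsim} by differentiating $t\mapsto\eQ^\phi_t\dD_\alpha\f-\dD_\alpha\eQ^\phi_{\alpha t}\f$ at $0$ and invoking uniqueness for the Kolmogorov equation. The commutation itself reduces to the multiplicative identity $\dD_\alpha\dD_{e^{-y}}=\dD_{e^{-y}}\dD_\alpha$ from Proposition~\ref{prop:D_alpha} together with direct Binomial computations showing that $n(\partial_++\partial_-)$, $\partial_+$, and the operator appearing in $\dBg_\Pi$ each transform homogeneously of degree one under $\dD_\alpha$.

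The main obstacle I anticipate is the verification that $\dBg_\phi\f\in\esC_0(\ZZ_+)$ for $\f\in\esC_c(\ZZ_+)$: although the drift terms vanish for $n$ beyond $\supp\f$, the contribution $\frac{1}{n+1}\int_0^\infty\dD_{e^{-y}}\f(n+1)\Pi(dy)$ is not obviously small, since $\dD_{e^{-y}}\f(n+1)$ still receives contributions from the fixed support of $\f$. I would resolve this by splitting the integration at $y=1$: for $y\ge 1$ the outer factor $1/(n+1)$ and the integrability $\int_1^\infty y\,\Pi(dy)<\infty$ suffice; for $y<1$ the bound $(1-e^{-y})^{n+1-r}\le y^{n+1-r}$ forces rapid decay in $n$, and an equi-integrability argument using $\int_0^1 y^2\,\Pi(dy)<\infty$ transfers the pointwise decay $\dD_{e^{-y}}\f(n+1)\to 0$ into convergence under the $\Pi$-integral.
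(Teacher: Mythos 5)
Your decomposition of $\dBg_\phi$ into a conservative upward skip-free Q-matrix, the $O(y\wedge y^2)$ control of the integrand near $y=0$, and the splitting at $y=1$ to show $\dBg_\phi\f\in\esC_0(\ZZ_+)$ for $\f\in\esC_c(\ZZ_+)$ all track the paper's computations. Your route to the self-similarity, however, is genuinely different: the paper never verifies the infinitesimal commutation $\dBg_\phi\dD_\alpha=\alpha\dD_\alpha\dBg_\phi$ directly, but instead transports the Lamperti scaling of the continuous semigroup $Q^\phi$ through the gateway $Q^\phi_t\Lambda=\Lambda\eQ^\phi_t$ combined with $d_\alpha\Lambda=\Lambda\dD_\alpha$ and the injectivity of $\Lambda$ on $\esC_b(\ZZ_+)$. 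Your direct computation would buy independence from the continuous theory, at the price of a nontrivial binomial identity for the $\dBg_\Pi$ part (the shift $n\mapsto n+1$ and the prefactor $\frac{1}{n+1}$ do not reduce to commutativity of the $\dD$'s) and of justifying that $\dD_\alpha\f$ --- which is no longer compactly supported --- lies in the domain of the Feller generator, so that differentiating $t\mapsto\eQ^\phi_t\dD_\alpha\f-\dD_\alpha\eQ^\phi_{\alpha t}\f$ is legitimate.

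The genuine gap is in the Feller property. To get $\eQ^\phi_t\esC_0(\ZZ_+)\subset\esC_0(\ZZ_+)$ you must show that the chain started from a large $n$ does not return to a fixed compact set within time $t$; the obstruction is the family of long-range \emph{downward} jumps $\dBg_\Pi(n,l)$ with $l$ much smaller than $n$, which can carry the chain into $[0,M]$ in a single step. Stochastic comparison with a pure-birth chain bounds $\dX_\phi$ from \emph{above}, and upward skip-freeness constrains only the upward jumps; neither controls downward collapse, so the asserted conclusion $\dX_\phi(t,n)\to\infty$ in probability does not follow from what you wrote. The paper handles precisely this point through the weighted bound $\sup_n\sum_l\frac{n+1}{l+1}\dBg_\phi(n,l)<\infty$ --- whose verification rests on an exact binomial identity --- and then invokes Theorem~3.2 and Corollary~3.2 of \cite[Chapter 8]{ethier-kurtz}, which simultaneously delivers generation, the Feller property and the core statement. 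Relatedly, your claim that $\esC_c(\ZZ_+)$ is ``invariant up to uniform approximation under the resolvents'' is unsubstantiated: the resolvents do not preserve compact supports, and the core property amounts to the density of $(\lambda-\dBg_\phi)\esC_c(\ZZ_+)$ in $\esC_0(\ZZ_+)$, which requires an argument rather than an appeal to density of $\esC_c(\ZZ_+)$ alone.
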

This theorem is proved in Section~\ref{ss:gen_disc}.
\begin{remark}\hlabel{rem:lbd}
When $\Pi\equiv 0$, $\dX_\phi$ is the reversible linear birth-death chain with invariant measure \[\frac{\Gamma(n+m+1)}{\Gamma(n+1)},~n\in\ZZ_+.\] For a detailed account on such Markov chains, we refer to \cite{miclo_patie}.
\end{remark}

\begin{remark}
	In \eqref{eq:disc_selfsim}, we restrict $\alpha \in [0,1]$ as $\dD_\alpha$ is, in this case, a Markov kernel. However,
	since $\dD_\alpha \mathsf{p}_k(n)=\alpha^k \mathsf{p}_k(n)$, for all $\alpha>0$ and $k,n \in \ZZ_+$, where $\mathsf{p}_k$ is defined in \eqref{eq:p_z} below, Theorem \ref{thm:moments}, also below, yields  that
	for all $\phi\in\Be$ and $t\geq0$, $\eQ_{t}^\phi\dD_\alpha \mathsf{p}_k(n) =\dD_\alpha\eQ_{\alpha t}^\phi \mathsf{p}_k(n)$, where $\eQ^\phi$ is the discrete self-similar semigroup generated by $\dBg_\phi$. This  reveals that the discrete self-similarity property also holds in a more general framework than the one given in \eqref{eq:disc_selfsim}.
\end{remark}

A continuous-time Markov chain is called upward skip-free if it does not jump more than one step above at any instant, that is, for any $n\in\ZZ_+$ and $l\ge n+2$, $\dBg(n,l)=0$ where $\dBg$ is the generator of the Markov chain. It can be easily shown that the discrete self-similar Markov chain $\dX_\phi$ with generator $\dBg_\phi$ is upward skip-free, see \eqref{eq:jump} below. In the next theorem we show the converse claim that is any discrete self-similar Markov chains must be upward skip-free.
\begin{theorem}\hlabel{thm:skip-free}
	Let $\dX$ be any continuous-time discrete self-similar Markov chain on $\ZZ_+$. Then $\dX$ is upward skip-free.
\end{theorem}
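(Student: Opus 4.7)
The plan is to extract explicit constraints on the jump rates $\dBg(n,l)$ of $\dX$ from the discrete self-similarity and then show by induction on $n$ that all upward jumps of size at least two are forbidden. First I would apply the identity $\mathds{Q}_t \dD_\alpha = \dD_\alpha\mathds{Q}_{\alpha t}$ to the bounded test function $\delta_j\in\esC_b(\ZZ_+)$; writing $q_t(n,l)=\mathds{Q}_t(n,l)$ this reads
\[
\sum_{l} q_t(n,l)\binom{l}{j}\alpha^j(1-\alpha)^{l-j} \;=\; \sum_{r=0}^n \binom{n}{r}\alpha^r(1-\alpha)^{n-r}\, q_{\alpha t}(r,j).
\]
The integrand on the left is bounded by $1$, so under the standing conservativeness $\sum_{l\ne n}\dBg(n,l)<\infty$ dominated convergence permits interchanging the $t$-derivative at $t=0$ with the infinite sum, while the right-hand side is a finite sum and differentiates termwise. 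Observing that $\binom{n}{j}=0$ for $j>n$ annihilates the diagonal contribution from $l=n$, this yields, for every $j>n$ and $\alpha\in(0,1)$,
\[
(\star)\qquad \sum_{l\ge j} \dBg(n,l)\binom{l}{j}\alpha^j(1-\alpha)^{l-j} \;=\; \alpha\sum_{r=0}^n \binom{n}{r}\alpha^r(1-\alpha)^{n-r}\,\dBg(r,j).
\]

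I then induct on $n$. For the base case $n=0$, specializing $(\star)$ to $j=1$ and dividing by $\alpha$ gives $\sum_{l\ge 1} l\,\dBg(0,l)(1-\alpha)^{l-1}=\dBg(0,1)$; extracting the $l=1$ contribution one obtains $\sum_{l\ge 2} l\,\dBg(0,l)(1-\alpha)^{l-1}=0$ for every $\alpha\in(0,1)$, and non-negativity of each summand forces $\dBg(0,l)=0$ for all $l\ge 2$. For the inductive step, suppose $\dBg(r,l)=0$ whenever $r<n$ and $l\ge r+2$, and apply $(\star)$ with $j=n+1$. Since $n+1\ge r+2$ for every $r<n$, the induction hypothesis kills every term with $r<n$ on the right-hand side of $(\star)$, leaving only $r=n$, namely $\alpha^{n+1}\dBg(n,n+1)$. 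Isolating the $l=n+1$ contribution on the left-hand side and cancelling it against the right-hand side reduces the identity to
\[
\sum_{l\ge n+2} \dBg(n,l)\binom{l}{n+1}(1-\alpha)^{l-n-1} \;=\; 0 \qquad \text{for all } \alpha\in(0,1),
\]
whose non-negative summands must therefore all vanish, so that $\dBg(n,l)=0$ for every $l\ge n+2$, completing the induction.

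The main technical point that requires care is the rigorous justification of the termwise $t$-differentiation producing $(\star)$; this is where I would invoke the standard theory of conservative continuous-time Markov chains on $\ZZ_+$ together with the uniform boundedness $\dD_\alpha\delta_j\le 1$. The algebraic engine of the proof is the clean collapse of the right-hand side of $(\star)$ at $j=n+1$, under the induction hypothesis, to a single monomial of degree $n+1$ in $\alpha$, which combined with the fact that the left-hand side is a sum of non-negative analytic terms each of order at least $\alpha^{n+1}$ in $\alpha$ forces all higher-order coefficients to vanish and therefore pins down every jump rate $\dBg(n,l)$ with $l\ge n+2$ to zero.
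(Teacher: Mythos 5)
Your proof is correct and follows essentially the same route as the paper: the same generator-level commutation identity obtained by applying the self-similarity relation to Dirac functions (your $(\star)$ is the paper's equation \eqref{eq:G}, with the binomial factor that the paper's display actually omits), the same base case $n=0$, $j=1$, and the same induction with $j=n+1$ exploiting nonnegativity of the off-diagonal rates. The only difference is presentational: you derive the identity by differentiating the semigroup relation at $t=0$ with a dominated-convergence justification, whereas the paper states the relation $\dBg\dD_\alpha=\alpha\dD_\alpha\dBg$ directly on the domain of the generator.
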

This theorem is proved in Section~\ref{ss:skip-free-pf}.

\subsection{Connections with self-similar Markov processes: gateway relation and scaling limit}\hlabel{sss:disc-cont-ssmp}
Self-similar Markov processes on the positive real line are well studied as they appear as the weak limits of various Markov processes, see Lamperti \cite{Lamperti-62}. When these processes are spectrally negative, that is, they do not have any positive jumps, and with $0$ as an entrance-non-exit boundary, Lamperti \cite{Lamperti-72} showed that they are in bijection  with the subset of Bernstein functions $\Be$ defined in \eqref{eq:bernstein_def} and moreover, the generator of these processes are of the form
\begin{align}\hlabel{eq:ssmp_spec}
G_\phi f(x)=&\sigma^2 x f''(x)+(m+\sigma^2) f'(x)\\+&\frac{1}{x}\int_0^\infty[d_{e^{-y}}f(x)-f(x)+yxf'(x)]\Pi(dy)
\end{align}
where $\phi$ is defined in terms of the triplet $(m,\sigma^2,\Pi)$, see \eqref{eq:bernstein_def} and $f\in\esC^\infty_c(\bb{R}_+)$.
The careful reader will have noticed that the operator $\dBg_\phi$ in \eqref{eq:skip_free_gen} is the discrete analogue of the operator $G_\phi$, revealing that the former is a natural approximation of the latter. However, we provide below a deeper connection between these class of Markov processes (operators) by establishing a gateway relation between their semigroups, a concept introduced in \cite{miclo_patie_2},  meaning that the connection goes in both directions.  As a by-product, we show that discrete self-similar Markov chains, after scaling appropriately, converge to the self-similar Markov processes in the Skorohod's $J_1$-topology.
\begin{theorem}\hlabel{thm:gateway_main}
\begin{enumerate}
	\item \hlabel{it:gateway} \textbf{Gateway relation.} For any $\phi\in\Be$, let $Q^\phi$ and $\eQ^\phi$ denote the Feller semigroups generated by $G_\phi$ and $\dBg_\phi$ respectively. Then, for any $\f\in \mathbf{C}_0(\ZZ_+)$ and  $t\ge 0$,
	\begin{align}\hlabel{eq:P_intertwiningMT}
		Q^\phi_t \Lambda \f=\Lambda \eQ^\phi_t \f
	\end{align}
	where $\Lambda\mathsf f(x)=\bb{E}[\mathsf f(\mathrm{Pois}(x))]$, $\mathrm{Pois}(x)$ being a Poisson random variable with parameter $x>0$.

	\item \hlabel{it:scaling_lim} \textbf{Scaling limit.}
	For any $\phi\in\Be$, let $\dX_\phi$ (resp.~$X_\phi=(X_\phi(t,x))_{t\geq0}$) be the discrete self-similar Markov chain (resp.~the positive self-similar Markov process issued from $x$), then, for all $x>0$,
	\begin{align} \label{eq:SKc}
		\left(\frac{1}{n}\dX_\phi(nt,\lfloor nx\rfloor)\right)_{t\ge 0}\longrightarrow\left(X_\phi(t,x)\right)_{t\ge 0}
	\end{align}
	in Skorohod's $J_1$-topology.
\end{enumerate}
\end{theorem}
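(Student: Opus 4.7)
Both parts are driven by the Poisson intertwining kernel $\Lambda\f(x)=\E[\f(\mathrm{Pois}(x))]=e^{-x}\sum_{n\ge 0}\f(n)x^n/n!$, which I treat as the bridge between $\ZZ_+$ and $\bb{R}_+$. Three Poissonian identities will be used throughout: (a) the derivative rule $(\Lambda\f)'(x)=\Lambda(\pp\f)(x)$; (b) the size-biasing identity $\Lambda(n\g)(x)=x\,\Lambda\g(x)+x(\Lambda\g)'(x)$; and (c) the Poisson-thinning identity $\Lambda(\dD_\alpha\f)(x)=\Lambda\f(\alpha x)$, a direct consequence of $\ttt B(\mathrm{Pois}(x),\alpha)\sim\mathrm{Pois}(\alpha x)$. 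Note that for $\f\in\mathbf{C}_c(\ZZ_+)$, $\Lambda\f$ is a polynomial times $e^{-x}$, hence smooth and in $\mathbf{C}_0(\bb{R}_+)$.

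For part~\eqref{it:gateway}, my plan is to verify the algebraic intertwining $G_\phi\Lambda\f=\Lambda\dBg_\phi\f$ on $\mathbf{C}_c(\ZZ_+)$ and then lift it to the semigroups. Iterating (a) and using (b) turn $\sigma^2 n(\pp+\pn)\f$ into $\sigma^2 x(\Lambda\f)''$ and $(m+\sigma^2)\pp\f$ into $(m+\sigma^2)(\Lambda\f)'$, matching the drift/diffusion part of $G_\phi$. For the jump part I would clear the $1/(n+1)$ weight via the ODE-derived identity $\Lambda(\g/(n+1))(x)=\tfrac{1}{x}\int_0^x e^{u-x}\Lambda\g(u)\,du$ (obtained by integrating the first-order ODE satisfied by $x\Lambda(\g/(n+1))$), and then, for each $y>0$, show
\[
\int_0^x e^{u-x}\Lambda\g_y(u)\,du \;=\; \Lambda\f(e^{-y}x)-\Lambda\f(x)+yx(\Lambda\f)'(x),
\]
with $\g_y(n):=\dD_{e^{-y}}\f(n+1)-\f(n+1)+y(n+1)\pp\f(n)$. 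Both sides vanish at $x=0$ and, using (a)--(c), satisfy the same first-order ODE $F'+F=\Lambda\g_y$, so they coincide. Integrating against $\Pi(dy)$---justified by the $y\wedge y^2$ bound on $\Lambda\g_y$ coming from the quadratic-in-$y$ cancellation built into $\g_y$---recovers $G_\phi^{\mathrm{jump}}\Lambda\f$. To lift to semigroups, both $t\mapsto Q_t^\phi\Lambda\f$ and $t\mapsto\Lambda\eQ_t^\phi\f$ are continuous $\mathbf{C}_0(\bb{R}_+)$-valued paths solving the abstract Cauchy problem $\partial_t u=G_\phi u$ with initial datum $\Lambda\f\in\mathbf{D}(G_\phi)$ (using Theorem~\ref{feller_skip}, which states that $\mathbf{C}_c(\ZZ_+)$ is a core for $\dBg_\phi$); uniqueness of Feller solutions gives \eqref{eq:P_intertwiningMT} on $\Lambda(\mathbf{C}_c(\ZZ_+))$, and density plus contraction extend it to all of $\mathbf{C}_0(\ZZ_+)$.

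For part~\eqref{it:scaling_lim}, I would apply the Trotter--Kato--Kurtz scheme. Given $f\in\mathbf{C}_c^\infty(\bb{R}_+)$, set $\f_n(k):=f(k/n)$; the generator of the rescaled chain $\frac{1}{n}\dX_\phi(n\cdot,\lfloor n\cdot\rfloor)$ acts on $f$ by $A_nf(x)=n\,\dBg_\phi\f_n(\lfloor nx\rfloor)$. Taylor expansion gives $n(m+\sigma^2)\pp\f_n(\lfloor nx\rfloor)\to(m+\sigma^2)f'(x)$ and $n\sigma^2\lfloor nx\rfloor(\pp+\pn)\f_n(\lfloor nx\rfloor)\to\sigma^2 x f''(x)$; for the jump part, the law of large numbers $\ttt B(\lfloor nx\rfloor+1,e^{-y})/n\to e^{-y}x$ combined with a second-order Taylor expansion of $f$ yields the pointwise convergence of the jump integrand to $\bigl(f(e^{-y}x)-f(x)+yxf'(x)\bigr)/x$, while a Hoeffding-type variance estimate on the binomial and the Taylor remainder supply an $n$-independent $C(f,x)(y\wedge y^2)$ domination and hence dominated convergence against $\Pi$. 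So $A_nf\to G_\phi f$ locally uniformly in $x$, and since $\mathbf{C}_c^\infty(\bb{R}_+)$ is a core for $G_\phi$ by Lamperti's theory, Ethier--Kurtz delivers strong convergence of the rescaled Feller semigroups and hence $J_1$-convergence in $D([0,\infty),\bb{R}_+)$ once the initial conditions $\lfloor nx\rfloor/n\to x$ are used. An alternative route, hinted at in the introduction, is the method of moments: from $\dD_\alpha\p_k=\alpha^k\p_k$ and $\Lambda\p_k(x)=x^k/k!$ together with part~\eqref{it:gateway}, one extracts closed forms for $\E[\p_k(\dX_\phi(t,\lfloor nx\rfloor))]$ which, after rescaling, converge to $\E[X_\phi(t,x)^k]/k!$.

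The principal obstacle I foresee is the uniform small-$y$ control of the jump terms in both parts: the quadratic-in-$y$ cancellation built into $\g_y$ must be preserved through the $1/(n+1)$ weight and the binomial averaging, yielding a $C(x)(y\wedge y^2)$ bound that is independent of $n$ in part~\eqref{it:scaling_lim} and locally uniform in $x$ in both parts. This relies on delicate second-order difference estimates for $\Lambda\f$ (resp.~$\f_n$) together with binomial tail bounds. A secondary subtlety in part~\eqref{it:gateway} is ensuring that $\Lambda\f$ sits inside a bona fide core of $G_\phi$ with enough regularity to feed the semigroup uniqueness argument; restricting first to the stable subspace spanned by the eigenfunctions $\Lambda\p_k=x^k/k!$, on which both semigroups act explicitly, and then extending by density sidesteps this concern.
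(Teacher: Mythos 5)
For part~\eqref{it:gateway} your argument is correct and amounts to a transposed version of the paper's proof. The paper verifies the generator-level relation in the direction $\dBg_\phi\nabla f=\nabla G_\phi f$ on the space $\esP=\{e^{-x}P(x):P \text{ polynomial}\}$, where $\nabla=\Lambda^{-1}$, by decomposing $\Pi=\int\delta_y\,\Pi(dy)$ and computing $\nabla G_y$ on the basis $\mathrm{h}_l(x)=e^{-x}x^l$ via the Leibniz rule; you instead verify $G_\phi\Lambda\f=\Lambda\dBg_\phi\f$ directly on $\mathbf{C}_c(\ZZ_+)$, handling the $\tfrac{1}{n+1}$ weight through the identity $\Lambda(\g/(n+1))(x)=\tfrac{1}{x}\int_0^x e^{u-x}\Lambda\g(u)\,du$ and a first-order ODE uniqueness argument (I checked: both sides do satisfy $F'+F=\Lambda\g_y$ with $F(0)=0$, so this is sound). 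The semigroup lift is also essentially the paper's: they use the Duhamel/Kolmogorov computation $\tfrac{d}{ds}Q^\phi_s\Lambda\eQ^\phi_{t-s}\f=0$, you invoke uniqueness for the abstract Cauchy problem; both require extending the generator intertwining from $\mathbf{C}_c(\ZZ_+)$ to $\mathbf{D}(\dBg_\phi)$ by the core property and closedness of $G_\phi$. One caveat: the step you call a ``secondary subtlety,'' namely $\Lambda\f=\esP\ni g\Rightarrow g\in\mathbf{D}(G_\phi)$, is a genuine lemma in the paper (Lemma~\ref{lem:domain_selfsim}, via Dynkin's characteristic operator and a decay estimate for the nonlocal term at infinity), and your proposed sidestep through the polynomial eigenfunctions $x^k/k!$ does not work in the Feller setting since polynomials are not in $\mathbf{C}_0(\bb{R}_+)$; you do need the domain membership.

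For part~\eqref{it:scaling_lim} there is a genuine gap. Your route through Trotter--Kato--Kurtz rests on the assertion that $\mathbf{C}_c^\infty(\bb{R}_+)$ is a core for $G_\phi$ ``by Lamperti's theory.'' Lamperti's results characterize the processes but do not identify such a core, and because $0$ is an entrance boundary with a $\tfrac1x$ singularity in the nonlocal term, whether smooth functions supported away from $0$ determine the generator is exactly the kind of delicate point one cannot wave at; the paper never claims it and explicitly avoids generator-approximation arguments (see the remark on Borodin--Olshanski in the introduction), proving the limit instead by (i) tightness via the second-moment computation from Theorem~\ref{thm:moments} and Kallenberg's criterion, and (ii) finite-dimensional convergence by the method of moments, using moment determinacy of $X_\phi(t,x)$ and a Cauchy--Schwarz induction on joint moments. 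In addition, the $n$-uniform $(y\wedge y^2)$ domination you flag as ``the principal obstacle'' is not resolved by the variance/Taylor estimates you sketch: the binomial-variance term and the discrete-derivative mismatch each contribute a bound of order $y/n$, and since only $\int_0^1 y^2\,\Pi(dy)<\infty$ is assumed, a $Cy/n$ remainder is not $\Pi$-integrable near $0$, so dominated convergence does not follow without a finer argument exploiting the exact second-order cancellation in $y$ of the integrand. Your one-sentence ``alternative route'' via moments is in fact the paper's proof, but as stated it addresses neither tightness nor the passage from one-dimensional to finite-dimensional convergence, both of which require real work (uniform integrability of all powers of $Y_n(t)$, again supplied by Theorem~\ref{thm:moments}).
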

The intertwining relation  \eqref{eq:P_intertwiningMT} is proved in Proposition~\ref{thm:gateway}\eqref{it:1} and the scaling limit \eqref{eq:SKc} is proved in Section~\ref{ss:scaling_lim_pf}.
\begin{remark}
As mentioned to us by an anonymous referee, the gateway relationship \eqref{eq:P_intertwiningMT} has the following  neat probabilistic interpretation, using the notation of item \eqref{it:scaling_lim} above,
\begin{equation}\label{eq:co}
		 \mathds X_\phi(t,\mathrm{Pois}(x)) \stackrel{(d)}{=}\mathrm{Pois}(X_\phi(t,x))
	\end{equation}
which is valid for any $t,x>0$. Using the self-similarity property of $X_\phi$, this identity yields, for any fixed $t,x>0$  and large  integer $n$ (but not for $\lfloor nx \rfloor $),  $\frac{1}{n}\mathds X_\phi(nt,\mathrm{Pois}(nx)) \stackrel{(d)}{=}\frac{1}{n} \mathrm{Pois}(X_\phi(nt,nx))\stackrel{(d)}{=} \frac{1}{n}\mathrm{Pois}(nX_\phi(t,x))\rightarrow X_\phi(t,x)$ in distribution.  Moreover, the identity \eqref{eq:co} boils down when $x$ tends to $0$ to
\begin{equation*}
		\mathds X_\phi(t,0) \stackrel{(d)}{=}\mathrm{Pois}(X_\phi(t,0))
	\end{equation*}
where $X_\phi(t,0)$ stands for the entrance law of $X_\phi$ which is known to exist as $m\geq0$, see e.g.~\cite{Patie-Savov-GeL}.
\end{remark}

\subsection{Discrete Laguerre chains from discrete self-similar Markov chains}
Let us now consider a perturbation of the discrete self-similar Markov chains, that is, we introduce a new family of discrete operators on $\esC_c(\ZZ_+)$ defined by
\begin{align}
	\eL_{\phi}\mathsf{f}(n)=\dBg_\phi\mathsf{f}(n)+n\partial_-\f(n)
\end{align}
where $\phi\in\Be$ and $\dBg_\phi$ is defined in \eqref{eq:skip_free_gen}. Alternatively, the operator $\eL_{\phi}$  can be represented, for any $\f\in\esC_c(\ZZ_+)$, as follows
\begin{align}
	\eL_{\phi}\f(n)=\sum_{l=0}^{n+1}\eL_{\phi}(n,l)\f(l)
\end{align}
where \begin{align}\hlabel{eq:Lphi}
	\eL_{\phi}(n,l)=\begin{cases}
	\dBg_\phi(n,l) & \mbox{if $l\neq n, n-1$} \\
	\dBg_\phi(n,n-1)+n & \mbox{if $l=n-1$} \\
	\dBg_\phi(n,n)-n & \mbox{if $l=n$}
\end{cases}
\end{align}
\tred{with $\dBg_\phi(n,l)=\dBg_\phi\delta_l(n)$ and $\delta_l(n)=\mathbbm{1}_{\{l=n\}}$.}

\begin{theorem}\hlabel{thm:generation_lag}
\begin{enumerate}
\item\hlabel{it:lag_1} For any $\phi\in\Be$, the operator $(\eL_{\phi},\esC_c(\ZZ_+))$ generates a Feller Markov semigroup on $\esC_0(\ZZ_+)$, which we denote by $\eK^\phi$.
\item\hlabel{it:P-K} We have, for any $\f\in \mathbf{C}_0(\ZZ_+)$ and  $t\ge 0$,
	\begin{align}\hlabel{eq:P_K}
		\eK^\phi_t \f= \eQ^\phi_{e^{t}-1} \dD_{e^{-t}} \f.
	\end{align}
\item \hlabel{it:invariant} The semigroup $\eK^\phi$ has a unique invariant distribution denoted by $\meKp$ and $\meKp(n)>0$ for all $n\in\ZZ_+$. Moreover, $\meKp$ has moments of all orders and it is moment determinate.
\item\hlabel{it:self-adj} Finally, the semigroup $\eK^\phi$ is self-adjoint in $\ell^2(\meKp)$ if and only if $\phi(u)=m+\sigma^2 u$ for some $m,\sigma^2\ge 0$.
\end{enumerate}
\end{theorem}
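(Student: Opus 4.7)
The plan is to prove items \eqref{it:lag_1} and \eqref{it:P-K} simultaneously by defining the candidate semigroup via the explicit formula and then identifying its generator. Specifically, set
\[
\eK^\phi_t \f := \eQ^\phi_{e^t-1}\,\dD_{e^{-t}}\,\f,\qquad t\ge 0,\ \f\in\esC_0(\ZZ_+).
\]
A useful preliminary observation is that $(\dD_{e^{-t}})_{t\ge 0}$ is itself a Feller Markov semigroup with generator $n\partial_-$: the semigroup property $\dD_{e^{-(t+s)}}=\dD_{e^{-t}}\dD_{e^{-s}}$ is a case of Proposition~\ref{prop:D_alpha}, and expanding $\dD_{e^{-t}}\f(n)=\sum_r\binom{n}{r}e^{-tr}(1-e^{-t})^{n-r}\f(r)$ to first order in $t$ near $0$ directly yields $n\partial_-\f(n)$. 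Combining with the self-similarity \eqref{eq:disc_selfsim} recast as $\dD_\alpha\eQ^\phi_t=\eQ^\phi_{t/\alpha}\dD_\alpha$ for $\alpha\in(0,1]$, a short commutation gives the semigroup property $\eK^\phi_t\eK^\phi_s=\eK^\phi_{s+t}$. Positivity and sub-Markovianity pass through composition of Markov kernels; preservation of $\esC_0(\ZZ_+)$ holds because $\ttt B(n,\alpha)\to\infty$ in probability as $n\to\infty$ for $\alpha\in(0,1]$, and $\eQ^\phi$ is Feller by Theorem~\ref{feller_skip}. For strong continuity at $t=0$, I will bound $\|\dD_{e^{-t}}\f-\f\|_\infty$ by splitting $n$ into $\{0,\ldots,N\}$ (where pointwise convergence is uniform on a finite set) and $\{N+1,\ldots\}$ (where both $\f$ and $\dD_{e^{-t}}\f$ are small, via a Chebyshev estimate on $\ttt B(n,e^{-t})$ combined with $\f\in\esC_0$). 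Finally, to identify the generator on $\esC_c(\ZZ_+)$, I use the alternative form $\eK^\phi_t\f=\dD_{e^{-t}}\eQ^\phi_{1-e^{-t}}\f$ together with the commutation $\dD_\alpha\dBg_\phi=\alpha^{-1}\dBg_\phi\dD_\alpha$ (obtained by differentiating the self-similarity at $t=0$), yielding $\tfrac{d}{dt}\eK^\phi_t\f=(\dBg_\phi+n\partial_-)\eK^\phi_t\f=\eL_\phi\eK^\phi_t\f$; at $t=0$ this identifies the generator with $\eL_\phi|_{\esC_c(\ZZ_+)}$. The coreness of $\esC_c(\ZZ_+)$ then follows from a standard density-of-range argument for the resolvent.

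For item \eqref{it:invariant}, the natural candidate is the Poisson mixture
\[
\meKp(n) := \int_0^\infty \frac{x^n e^{-x}}{n!}\,\nu_\phi(dx),\qquad n\in\ZZ_+,
\]
where $\nu_\phi$ is the invariant distribution of the generalized Laguerre semigroup $P^\phi$ on $\R_+$ from \cite{Patie-Savov-GeL}. I will first derive the intertwining $P^\phi_t\Lambda=\Lambda\eK^\phi_t$ by combining: the gateway $Q^\phi_t\Lambda=\Lambda\eQ^\phi_t$ of Theorem~\ref{thm:gateway_main}\eqref{it:gateway}; the Poisson-thinning identity $d_\alpha\Lambda=\Lambda\dD_\alpha$, which holds because a $\ttt B(n,\alpha)$-thinning of a $\mathrm{Pois}(x)$ variable is a $\mathrm{Pois}(\alpha x)$; and the continuous analogue $P^\phi_t=Q^\phi_{e^t-1}d_{e^{-t}}$, proved exactly as the discrete formula \eqref{eq:P_K}. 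Integrating the intertwining against $\nu_\phi$ yields $\meKp\eK^\phi_t\f=\meKp\f$, establishing invariance. Positivity $\meKp(n)>0$ is clear since $x^n e^{-x}/n!>0$ on the support of $\nu_\phi$. Uniqueness follows from irreducibility of $\eK^\phi$, guaranteed by the strictly positive rate $n$ for downward jumps from every positive state and a strictly positive upward rate (from $(m+\sigma^2)\partial_+$ when $m+\sigma^2>0$, and from $\dBg_\Pi$ otherwise). For the moments, conditioning yields $\sum_n n^k\meKp(n)=\int_0^\infty B_k(x)\,\nu_\phi(dx)$ where $B_k$ is the $k$-th Bell polynomial; finiteness follows since $\nu_\phi$ has moments $\int x^k\nu_\phi(dx)=\prod_{j=1}^k\phi(j)$ of all orders (\cite{Patie-Savov-GeL}). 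Moment determinacy for $\meKp$ then follows from Carleman's condition applied to the bound $\meKp(n^k)\le 2\,k!\prod_{j=1}^k\phi(j)$, using the Bernstein inequality $\phi(j)\le\phi(1)\,j$ for $j\ge 1$ (a consequence of concavity of $\phi$ and $\phi(0)\ge 0$).

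For item \eqref{it:self-adj}, the ``if'' direction is a direct verification: when $\phi(u)=m+\sigma^2u$, $\Pi=0$ and $\dBg_\Pi\equiv 0$, so $\eL_\phi$ reduces to a birth-death generator, for which the explicit $\meKp$ satisfies detailed balance (yielding reversibility, hence self-adjointness in $\ell^2(\meKp)$). For the converse, suppose $\eL_\phi$ is self-adjoint in $\ell^2(\meKp)$; then detailed balance $\meKp(n)\eL_\phi(n,l)=\meKp(l)\eL_\phi(l,n)$ holds for all $n,l\in\ZZ_+$. Since $\dX_\phi$ is upward skip-free (Theorem~\ref{thm:skip-free}), $\eL_\phi(l,n)=0$ for $n\ge l+2$; detailed balance together with $\meKp(n)>0$ then forces $\eL_\phi(n,l)=0$ for every $l\le n-2$. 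But the multi-step downward rates of $\eL_\phi$ arise exclusively from $\dBg_\Pi$, with explicit form $\eL_\phi(n,l)=\int_0^\infty\tfrac{1}{n+1}\binom{n+1}{l}e^{-ly}(1-e^{-y})^{n+1-l}\Pi(dy)$ for $l\le n-1$, obtained by expanding $\dD_{e^{-y}}\f(n+1)$ in \eqref{eq:skip_free_gen}. Since the integrand is strictly positive for $y>0$, applying this with $n=1,\,l=0$ yields $\int_0^\infty(1-e^{-y})^2\Pi(dy)=0$, forcing $\Pi\equiv 0$ and thus $\phi(u)=m+\sigma^2u$.

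The main technical hurdles I foresee are establishing coreness of $\esC_c(\ZZ_+)$ for $\eL_\phi$ (as $\esC_c$ is not invariant under $\eK^\phi_t$) and carrying out the uniform-in-$n$ estimate for strong continuity of $\dD_{e^{-t}}$; the Carleman argument for moment determinacy, though standard, also demands careful bookkeeping of the Bell-polynomial growth.
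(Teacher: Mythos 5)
Your treatment of items \eqref{it:invariant} and \eqref{it:self-adj} follows essentially the paper's route: the same candidate $\meKp=\nu_\phi\Lambda$ obtained by pushing the intertwining $K^\phi_t\Lambda=\Lambda\eK^\phi_t$ through $\nu_\phi$, and the same detailed-balance-plus-skip-free argument for self-adjointness (your uniqueness-via-irreducibility and Carleman-condition arguments are mild variants of the paper's, which instead shows any invariant law has factorial moments $W_\phi(k+1)$ and proves determinacy by exhibiting a finite exponential moment $\mathfrak e_a\meKp<\infty$ for $0<a<\log(1+\sigma^{-2})$; both work). Where you genuinely diverge is in items \eqref{it:lag_1}--\eqref{it:P-K}: you \emph{define} $\eK^\phi_t:=\eQ^\phi_{e^t-1}\dD_{e^{-t}}$ and verify the Feller semigroup axioms from the discrete self-similarity of $\eQ^\phi$, then identify the generator by differentiating at $t=0$; the paper instead generates the semigroup directly from $(\eL_\phi,\esC_c(\ZZ_+))$ by verifying the Ethier--Kurtz conditions (as in the proof of Theorem~\ref{feller_skip}) and only afterwards deduces \eqref{eq:P_K} from the intertwining $K^\phi_t\Lambda=\Lambda\eK^\phi_t$, the continuous identity $K^\phi_t=Q^\phi_{e^t-1}d_{e^{-t}}$, Proposition~\ref{prop:D_alpha}\eqref{it:d1} and the injectivity of $\Lambda$. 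Your order of logic is legitimate and arguably more transparent, since it makes the subordination formula the definition rather than a consequence.

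Two concrete issues. First, the coreness of $\esC_c(\ZZ_+)$ is the one genuinely technical part of item \eqref{it:lag_1}, and your appeal to ``a standard density-of-range argument for the resolvent'' is not a proof: since $\esC_c(\ZZ_+)$ is not invariant under $\eK^\phi_t$ (as you note), the only available route is to establish the range condition $\overline{(\lambda-\eL_\phi)\esC_c(\ZZ_+)}=\esC_0(\ZZ_+)$, and this is exactly what the page of estimates verifying conditions (i)--(iv) in the proof of Theorem~\ref{feller_skip} accomplishes for $\dBg_\phi$; the analogous estimates for $\eL_\phi$ (which differs only by the bounded-below perturbation $n\partial_-$, harmless for those conditions) must still be carried out. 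Without coreness you have only shown that the generator of your semigroup \emph{extends} $\eL_\phi|_{\esC_c(\ZZ_+)}$, which is weaker than the statement. Second, in item \eqref{it:self-adj} your detailed-balance argument only forces $\eL_\phi(n,l)=0$ for $l\le n-2$, so the choice $n=1$, $l=0$ is not available (that is a nearest-neighbour rate, which is allowed to be positive); take instead $n=2$, $l=0$, which gives $\int_0^\infty(1-e^{-y})^3\,\Pi(dy)=0$ and hence $\Pi\equiv0$ just as well. This is an index slip, not a gap in the idea.
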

We have omitted the proof of the item \eqref{it:lag_1} since it can be obtained by following a line of reasoning similar to  the proof of Theorem~\ref{feller_skip} from the claims given in Proposition \ref{prop:int_laguerre}. Item \eqref{it:P-K} is proved after this latter Proposition. The properties of the invariant distribution in item~\eqref{it:invariant} are proved in Proposition~\ref{prop:int_laguerre}\eqref{it:inv} and Proposition~\ref{prop:n_phi}. Item~\eqref{it:self-adj} is proved in Proposition~\ref{prop:int_laguerre}\eqref{it:self_adj}.

\begin{remark}
 In Proposition \ref{prop:int_laguerre} and \ref{prop:n_phi}, we provide additional properties, including several representations, of the invariant measure $\meKp$.
\end{remark}

We name the Markov semigroup $\eK^{\phi}$ (resp.~the Markov chain) the \emph{skip-free Laguerre semigroup} (resp.~\emph{skip-free Laguerre  chain}). This is motivated by the following observation. The operator $\eL_{\phi}$ can be viewed as the discrete analogue of the generalized Laguerre operator on $\RR_+$, studied in \cite{Patie-Savov-GeL}, and defined by
\begin{align}\hlabel{eq:cont_lag_gen}
	L_\phi f(x)=&G_\phi f(x)-xf'(x) \nonumber \\
	=&\sigma^2 xf''(x)+\lb m+\sigma^2-x\rb f'(x) \\+&  \frac{1}{x}\int^{\infty}_{0} \left(d_{e^{-y}}f(x)-f(x)+yxf'(x)\right) \Pi(dy)
\end{align}
where $G_\phi$ is defined in \eqref{eq:ssmp_spec} and $(\sigma,\beta,\Pi)$ is the characteristic triplet of  $\phi$.

We now aim to derive the spectral properties, convergence to the equilibrium and hypercontractivity phenomenon of $\eK^{\phi}$.

\subsection{Spectral expansion and the spectrum of the skip-free Laguerre semigroups} Since the semigroup $\eK^\phi$ has invariant distribution $\meKp$, we can extend it on the Hilbert space $\ell^2(\meKp)$. If $\phi$ is as in \eqref{eq:bernstein_def}, let $\sigma_1$ be defined as follows
	\begin{align}\hlabel{eq:tau}
	\sigma_1=\begin{cases}
		\sigma^{2} & \mbox{if $\sigma^2>0$} \\
		1 & \mbox{if $\sigma^2=0$}. \end{cases}
\end{align}
We now introduce a sequence of discrete (acting on $\ZZ_+)$ polynomials defined, for $k,n\in\ZZ_+$, by
\begin{align}\hlabel{eq:def_e}\rpnn\sum_{r=0}^{k}(-1)^r {k \choose r}  \frac{\mathsf{p}_r(n)}{W_{\phi}(r+1)}
\end{align}
where $W_\phi(k+1)=\prod_{r=1}^k\phi(r),\ W_\phi(1)=1$ and  $
	\mathsf{p}_r(n)=\frac{\Gamma(n+1)}{\Gamma(n+1-r)}$.
Since the invariant distribution $\meKp$ has finite moments of all order, see Theorem~\ref{thm:generation_lag}\eqref{it:invariant}, it is plain that,  for all $k\in\ZZ_+$, $\eigLd\in\ell^2(\meKp)$. Next, for  $k,n\in\ZZ_+$, we define
\begin{align}\hlabel{eq:coeig}
	\eigLdd_k(n)= \frac{\rppp}{\meKp(n)} \sum_{r=0}^{k\wedge n}(-1)^r \frac{(k+n-r)!}{(k-r)!(n-r)!r!}\meKp(k+n-r).
\end{align}
\begin{theorem}\hlabel{thm:eig_coeig}
	\begin{enumerate}
		\item\hlabel{it:spect}\textbf{Spectrum.} For any $\phi\in\Be, t\geq0$ and $k\in\ZZ_+$, $\eigLdd_k\in\ell^2(\meKp)$, and
		\begin{align*}
			\eK^\phi_t\eigLd=e^{-kt}\eigLd,~ \quad \wi{\eK}^\phi_t\eigLdd_k=e^{-kt}\eigLdd_k
		\end{align*}
where $\wi{\eK}^\phi_t$ is the $\ell^2(\meKp)$-adjoint of ${\eK}^\phi_t$.
		Hence, $\{e^{-kt}; k\in\bb{Z}_+\}\subseteq\Spec_p(\eK^\phi_t)\cap\Spec_p(\wi{\eK}^\phi_t)$, where for an operator $T$, $\Spec_p(T)$ denotes the point spectrum of $T$.
		\item \hlabel{it:biortho} \textbf{Biorthogonality.} $(\eigLd)_{k\ge 0}$ and $(\eigLdd_k)_{k\ge 0}$ are biorthogonal sequences in $\ell^2(\meKp)$, that is, for all $k,l\in\ZZ_+$,
		\[\left\langle\eigLd,\eigLdd_l\right\rangle_{\meKp}=\bbm{1}_{\{k=l\}}.\]
		
		\item\hlabel{it:spectral_exp} \textbf{Spectral expansion.} If $\sigma^2>0$, then, for all $f\in\ell^2(\meKp)$ and $t> \lnt$,
		\begin{equation}\hlabel{eq:spect_exp}
			\SLp_t \mathsf{f} = \sum_{k=0}^{\infty}e^{-kt} \left\langle \mathsf{f}, \eigLdd_k\right\rangle_\meKp\!\!\! \eigLd.
		\end{equation}
		\item\hlabel{it:spect_compact} \textbf{Compactness.} If $\sigma^2>0$, then, for all $t>\lnt$, $\eK^\phi_t$ is compact and, denoting by $\Spec(\eK^\phi_t)$ the spectrum of $\eK^\phi_t$, we have
		\begin{align*}
			\Spec(\eK^\phi_t)\setminus\{0\}=\Spec_p(\eK^\phi_t)=\{e^{-kt}; k\in\ZZ_+\}.
		\end{align*}
	\item \hlabel{it:lag_transition}  \textbf{Transition probabilities.} If $(\eK_t^\phi(\cdot,\cdot))_{t\geq0}$ denotes the transition probabilities of the skip-free Laguerre chain and $\sigma^2>0$, then, for all $t>\lnt$ and $n,l\in\ZZ_+$, we have
	\begin{align*}
		\eK^\phi_t(n,l)=\sum_{k=0}^\infty e^{-kt}\eigLd(n)\eigLdd_k(l)\meKp(l)
	\end{align*}
where the sum on the right-hand side of the above identity converges absolutely.
	\end{enumerate}
\end{theorem}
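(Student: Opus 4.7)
My plan is to combine the intertwining identity $\SLp_t=\eQ^\phi_{e^t-1}\dD_{e^{-t}}$ from Theorem~\ref{thm:generation_lag}\eqref{it:P-K} with an explicit formula for the action of $\eQ^\phi$ on the falling-factorial basis $(\mathsf{p}_r)_{r\geq 0}$, which reduces item~\eqref{it:spect} to a purely algebraic manipulation. The first step is to derive that formula: via the gateway relation $Q^\phi_t\Lambda=\Lambda\eQ^\phi_t$, the identity $\Lambda\mathsf{p}_r(x)=x^r$ (the $r$-th factorial moment of a Poisson variable), and the elementary computation $G_\phi x^r=r\phi(r)x^{r-1}$, one finds that $Q^\phi_s x^r$ is the unique polynomial solution of an upper-triangular linear Cauchy problem, which yields
\begin{equation*}
\eQ^\phi_s\mathsf{p}_r(n)=r!\,W_\phi(r+1)\sum_{j=0}^r\frac{s^{r-j}\,\mathsf{p}_j(n)}{(r-j)!\,j!\,W_\phi(j+1)}.
\end{equation*}
Plugging this and $\dD_{e^{-t}}\mathsf{p}_r=e^{-rt}\mathsf{p}_r$ into \eqref{eq:def_e} with $s=e^t-1$ and collapsing the resulting double sum via the identity $\sum_{u=0}^{k-j}\binom{k-j}{u}(-(1-e^{-t}))^u=e^{-(k-j)t}$ delivers $\SLp_t\eigLd=e^{-kt}\eigLd$.

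For item~\eqref{it:biortho}, I would substitute \eqref{eq:def_e} and \eqref{eq:coeig} into $\langle\eigLd,\eigLdd_l\rangle_\meKp$; the weight $\meKp(n)$ cancels, and after exchanging the orders of summation the inner sum reduces to factorial-moment quantities of the form $\sum_n\mathsf{p}_r(n)\meKp(n+a)$, which are computable through the Poisson-mixture representation of $\meKp$ established in Proposition~\ref{prop:n_phi} (giving in particular $\meKp\mathsf{p}_k=W_\phi(k+1)$). A Vandermonde--Chu collapse of the remaining binomial factors then produces $\mathbbm{1}_{\{k=l\}}$. The coeigenvalue identity $\wi{\SLp}_t\eigLdd_k=e^{-kt}\eigLdd_k$ in item~\eqref{it:spect} then follows from the duality $\langle\SLp_t\eigLd,\eigLdd_l\rangle_\meKp=\langle\eigLd,\wi{\SLp}_t\eigLdd_l\rangle_\meKp$ once biorthogonality is in hand and one invokes the density of the polynomial span in $\ell^2(\meKp)$, itself a consequence of the moment-determinacy of $\meKp$ recorded in Theorem~\ref{thm:generation_lag}\eqref{it:invariant}.

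The spectral expansion in item~\eqref{it:spectral_exp} is the deepest step. My strategy is to intertwine $\SLp$ with the reference self-adjoint skip-free Laguerre semigroup corresponding to $\phi_0(u)=m+\sigma^2 u$ (whose spectral expansion is the classical Meixner one), via a Markov kernel built from the Bernstein--gamma ratio $W_\phi/W_{\phi_0}$, in the spirit of the continuous generalized Laguerre theory of \cite{Patie-Savov-GeL}; this intertwining transfers the reference expansion to $\SLp$. Bernstein--gamma bounds provide the coeigenfunction norm estimate $\|\eigLdd_k\|_\meKp^2\asymp(1+\sigma^{-2})^k$ together with $\|\eigLd\|_\meKp\asymp 1$, and these two together pin down the sharp convergence threshold $t>\lnt$. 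Items~\eqref{it:spect_compact} and~\eqref{it:lag_transition} then follow cleanly: for $t>\lnt$ the expansion exhibits $\SLp_t$ as a uniform-norm limit of finite-rank operators, hence compact, and the point spectrum computed in item~\eqref{it:spect} exhausts the non-zero spectrum; the transition-probability formula is read off by evaluating the expansion at $\f=\mathbbm{1}_{\{l\}}/\meKp(l)$, with absolute convergence supplied by the same norm estimates. The principal obstacle is the rigorous justification of the spectral expansion up to the sharp threshold $\lnt$, which requires both the coeigenfunction norm control and the non-self-adjoint functional-analytic framework recalled in Subsection~\ref{ss:hilbert}.
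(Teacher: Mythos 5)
Your derivation of the eigenfunction identity in item~\eqref{it:spect} is a nice, more elementary alternative to the paper's route: the paper obtains $\eK^\phi_t\mathsf{p}_k=\sum_{l}\binom{k}{l}\frac{W_\phi(k+1)}{W_\phi(l+1)}e^{-tl}(1-e^{-t})^{k-l}\mathsf{p}_l$ (its Lemma~\ref{lem:timechange}) by pushing the Bertoin--Yor moment formula through the gateway kernel $\Lambda$, rather than by solving the triangular Cauchy problem directly, but the binomial collapse you describe is exactly the right computation and does yield $\eK^\phi_t\eigLd=e^{-kt}\eigLd$. The items \eqref{it:spect_compact} and \eqref{it:lag_transition} are also handled as in the paper. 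However, there are three genuine gaps. First, item~\eqref{it:spect} asserts $\eigLdd_k\in\ell^2(\meKp)$ for \emph{every} $\phi\in\Be$, and your norm estimate $\|\eigLdd_k\|^2_{\ell^2(\meKp)}\asymp(1+\sigma^{-2})^k$ only addresses $\sigma^2>0$ (and is, as stated, stronger than what is available: the paper only imports the one-sided bound $\|\eigLdd_k\|_{\ell^2(\meKp)}\leq C_\epsilon(1+\sigma^{-2})^{k/2}e^{\epsilon k}$ from \cite[Theorem~10.1]{Patie-Savov-GeL}, and the $e^{\epsilon k}$ loss matters for pinning the threshold at $\lnt$). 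When $\sigma^2=0$ the paper needs a separate direct estimate, using $e^{-\phi(\infty)}W_\phi(n+1)/n!\leq\meKp(n)\leq\phi(\infty)^n/n!$, to get $\eigLdd_k\in\ell^2(\meKp)$ for $k\notin\ZZ_\phi$; nothing in your plan covers this case. Second, your biorthogonality computation is only named, not performed: the double sum $\sum_n\mathsf{p}_r(n)\sum_j(-1)^j\frac{(k+n-j)!}{(k-j)!(n-j)!j!}\meKp(k+n-j)$ is a discrete summation-by-parts whose cancellation is not a one-line Vandermonde--Chu identity, and the interchange of the infinite sums needs justification. The paper avoids this entirely by identifying $\eigLdd_k=\wi{\Lambda}_\phi\mathrm{V}^\phi_k$ and transferring biorthogonality from $\langle\eigLc_k,\eigdLc_l\rangle_{\nu_\phi}$, with a further $\phi_\epsilon=\epsilon u+\phi$, $\epsilon\downarrow0$ approximation to cover $\sigma^2=0$; if you insist on the direct route you must actually exhibit the collapse.

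Third, and most importantly, for item~\eqref{it:spectral_exp} the intertwining $\SLp_t\Mphi=\Mphi\eK^{\sigma^2}_t$ only transfers the reference expansion to functions in $\mathrm{Ran}(\Mphi)$, which is dense but not all of $\ell^2(\meKp)$ (the $\eigLd$ do not form a basis). The substance of the proof is the extension step: one must show that the synthesis operator of the Bessel sequence $(\eigLd)_{k\geq0}$ is bounded, that $\left(e^{-kt}\langle\f,\eigLdd_k\rangle_{\meKp}\right)_{k\geq0}\in\ell^2(\ZZ_+)$ for all $\f\in\ell^2(\meKp)$ precisely when $t>\lnt$ (this is where the coeigenfunction bound with the $e^{\epsilon k}$ factor enters), and that the resulting spectral operator agrees with $\SLp_t$ on the dense range and hence everywhere by the bounded linear transformation theorem. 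You flag this as ``the principal obstacle'' but do not supply the argument; also note that you would need the identity $\Mphiad\eigLdd_k=\eigL^{\sigma^2}$ (proved in the paper via uniqueness of biorthogonal sequences) to convert $\langle\f,\eigL^{\sigma^2}\rangle_{\meK_{\sigma^2}}$ into $\langle\Mphi\f,\eigLdd_k\rangle_{\meKp}$. Finally, your reference semigroup $\phi_0(u)=m+\sigma^2u$ differs from the paper's $\phi_0(u)=\sigma^2u$; for your choice one must still verify that the ratio $W_{\phi_0}/W_\phi$ is the moment sequence of a $[0,1]$-valued random variable so that the intertwining kernel is Markov and contractive --- this is exactly the input from \cite[Proposition~6.7]{Patie-Savov-GeL} that makes the paper's choice work, and it is not automatic for yours.
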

This theorem is proved in Section~\ref{ss:eig_coeig_pf}.
\begin{remark}
It should be noted that \eqref{it:spect} in the above theorem is different from the result in the case of generalized Laguerre semigroups on $\bb{R}_+$, their continuous analogue. Indeed, from \cite[Theorem 1.22(4)(d)]{Patie-Savov-GeL}, $e^{-kt}\in\Spec_p(\wi{K}^\phi_t)$ only if $k\in\ZZ_\phi$ (see \eqref{eq:Z_phi} for the definition of $\ZZ_\phi$) and $e^{-kt}\in\Spec_r(\wi{K}^\phi_t)$ if $k\notin\ZZ_\phi$, where  $\Spec_r(\eK^\phi_t)$ stands for the residual spectrum of $\eK^\phi_t$. However, for the discrete Laguerre semigroup $\eK^\phi$, $e^{-kt}\in\Spec_p(\wi{\eK}^\phi_t)$ for all $k\in \ZZ_+$.	
\end{remark}

\subsection{Convergence to equilibrium} In Theorem~\ref{thm:generation_lag}\eqref{it:invariant} we have seen that the non-self-adjoint skip-free Laguerre chains have an unique invariant distribution. In this section, we start by studying the rate of convergence to their invariant distributions via spectral gap inequality, which comes as a by-product of the spectral expansion obtained in the previous theorem. We proceed with explicit rate of convergence to equilibrium in the $\Phi$-entropy sense, which is a consequence of a more subtle relation with the self-adjoint birth-death Laguerre chain, namely an interweaving relation discussed in Section~\ref{ss:interweaving}. Before stating the result, let us introduce a few additional objects related to the Bernstein functions. For any $\phi\in\Be$ let us define
\begin{align}\hlabel{eq:dphi}
	\mathrm{d}_{\phi}=\min\{u\geq 0; \: \phi(-u)=-\infty, \phi(-u)=0 \} \in [0,\infty].
\end{align}
If $(m,\sigma^2,\Pi)$ is the triplet associated to $\phi$, let us write
\begin{align}\hlabel{eq:mphi}
	\mathrm{m}_\phi=\lim_{u\to\infty}\frac{\phi(u)-\sigma^2u}{\sigma^2}=\frac{m+\ovl{\ovl{\Pi}}(0)}{\sigma^2}
\end{align}
where $\ovl{\ovl{\Pi}}(0)=\int_0^\infty \Pi(y,\infty)dy$. The quantity $\mathrm{m}_\phi$ is finite whenever $\sigma^2>0$ and $\ovl{\ovl{\Pi}}(0)\in [0,\infty)$.

Next, for an open interval $I\subseteq\bb{R}$, we say that a function $\Phi:I\to\bb{R}$ is \emph{admissible} if
\begin{align}\hlabel{eq:admissible}
	\Phi\in\esC^4(I) \text{ with both $\Phi$ and $-\frac{1}{\Phi''}$ convex.}
\end{align}
Given an admissible function $\Phi$, and a probability measure $\mu$ on $\bb{R}$, we write for any $f:\bb{R}_+\to I$ with $f,\Phi(f)\in\bmrm[1]{L}(\mu)$
\begin{align}\hlabel{eq:phi_ent}
	\ent^\Phi_\mu(f)=\mu\Phi(f)-\Phi(\mu f)
\end{align}
for the so-called $\Phi$-entropy of $f$. When $\Phi(x)=x^2, I=\bb{R}$, \eqref{eq:phi_ent} is equal to $\var_\mu(f)$ and when $\Phi(x)=x\log x , I=\bb{R}_+$, \eqref{eq:phi_ent} yields the Boltzmann entropy of $f$ with respect to $\mu$. From Jensen's inequality it is plain that the $\Phi$-entropy is always nonnegative. We are now ready to state the following.

\begin{theorem}\hlabel{thm:spgap_ent} Let $\phi\in\Be$ be associated with the triplet $(m,\sigma^2,\Pi)$ such that $\sigma^2,\mathrm{d}_\phi>0$ and $\ovl{\ovl{\Pi}}(0)<\infty$. Then, the following holds.
\begin{enumerate}\item\textbf{Hypocoercive estimate.} \hlabel{thm:l0_norm} For all $\f\in\ell^2(\meKp)$ and $t\ge 0$, we have
	\begin{align}\hlabel{eq:spect_gap}
		\left\|\eK^\phi_t\f-\meKp\f\right\|_{\ell^2(\meKp)}\le\sqrt{\frac{(\mathrm{m}_\phi+1)(1+\sigma^2)}{\sigma^2(\mathrm{d}_\phi+1)}} e^{-t}\|\f-\meKp\f\|_{\ell^2(\meKp)}.
	\end{align}
\item \textbf{Entropy decay.} \hlabel{thm:entropy}
For all $\beta>\mathrm{m}_\phi$, $t\geq 0$ and $\f$ such that $\f,\Phi(\f)\in\ell^1(\meKp)$, we have
\begin{align}
	\ent^\Phi_{\meKp}\left(\eK^\phi_{t+\tau_\beta}\f\right)\le e^{-t}\ent^\Phi_{\meKp}(\f)
\end{align}
where, we recall that $\eK^\phi_{t+\tau_\beta}\f(n)=\E[\f(\dX_\phi(t+\tau_\beta,n)]$ and $\tau_\beta$ is an infinitely divisible positive random variable whose Laplace transform is given by
\begin{align}\hlabel{eq:tau_beta1}
	\int_0^\infty e^{-us}\bb{P}(\tau_\beta\in ds)=e^{-\phi_\beta(u)}, \ \ \ u>0,
\end{align}
with $\phi_\beta(u)=u\log\left(1+\sigma^{-2}\right)+\log\left(\frac{\Gamma(u+\beta+1)}{\Gamma(1+\beta)\Gamma(u+1)}\right)$.
\end{enumerate}
\end{theorem}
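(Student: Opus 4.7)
The theorem contains two distinct statements; I would treat them with different tools already developed in the paper.

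\textbf{Hypocoercive estimate \eqref{thm:l0_norm}.}  The plan is to split the time axis at the threshold $t_0:=\lnt$ above which the spectral expansion of Theorem~\ref{thm:eig_coeig}\eqref{it:spectral_exp} is available.  For $t>t_0$, apply that expansion to $\f-\meKp\f$, factor out $e^{-t}$, and control the tail by summing $\sum_{k\ge 1}e^{-(k-1)t}\|\eigLdd_k\|_{\meKp}\|\eigLd\|_{\meKp}$; the explicit formulae \eqref{eq:def_e}--\eqref{eq:coeig}, combined with the Bernstein--Gamma asymptotics $W_\phi(k+1)=\prod_{j=1}^k\phi(j)\asymp\sigma^{2k}\Gamma(k+\mathrm{m}_\phi+1)/\Gamma(\mathrm{m}_\phi+1)$ (where the hypotheses $\sigma^2>0$ and $\ovl{\ovl{\Pi}}(0)<\infty$ enter), render this series convergent and produce the factor $\sqrt{(\mathrm{m}_\phi+1)/(\mathrm{d}_\phi+1)}$, the denominator being the reciprocal of the radius of absolute convergence of the generating function of $\meKp$, which explains the necessity of $\mathrm{d}_\phi>0$.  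For $0\le t\le t_0$, I would invoke the $\ell^2(\meKp)$-Markov contractivity of $\eK^\phi_t$ on the mean-zero subspace and note that $e^{t_0}e^{-t}\ge 1$ on this range, so the prefactor $e^{t_0}=\sqrt{(1+\sigma^2)/\sigma^2}$ can be inserted for free.  Gluing the two regimes yields \eqref{eq:spect_gap}.

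\textbf{$\Phi$-entropy decay \eqref{thm:entropy}.}  The strategy is to transfer the $\Phi$-entropy contraction from a reversible birth-death Laguerre semigroup $\mathsf{K}^\beta$ on $\ell^2(\mu_\beta)$ to $\eK^\phi$ via the interweaving machinery of Section~\ref{ss:interweaving}.  The reference chain $\mathsf{K}^\beta$ has linear birth-death rates, and, being a classical birth-death chain with spectral gap $1$, satisfies $\ent^\Phi_{\mu_\beta}(\mathsf{K}^\beta_t g)\le e^{-t}\ent^\Phi_{\mu_\beta}(g)$ for every admissible $\Phi$ (Bakry--\'Emery / modified log-Sobolev).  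The interweaving supplies Markov kernels $\Lambda_1,\Lambda_2$ with $\meKp\Lambda_1=\mu_\beta$ and $\mu_\beta\Lambda_2=\meKp$, two-sided intertwinings $\eK^\phi_t\Lambda_1=\Lambda_1\mathsf{K}^\beta_t$ and $\Lambda_2\eK^\phi_t=\mathsf{K}^\beta_t\Lambda_2$, and the crucial factorization
\[
\Lambda_1\Lambda_2=\int_0^\infty \eK^\phi_s\,\PP(\tau_\beta\in ds).
\]
Evaluating $\Lambda_1\Lambda_2$ on the eigenbasis $(\eigLd)_{k\ge 0}$ and matching eigenvalues identifies the Laplace exponent of $\tau_\beta$ with the claimed $\phi_\beta$; the Malmst\'en integral representation of $\log\Gamma$ certifies $\phi_\beta\in\Be$, yielding the infinite divisibility of $\tau_\beta$.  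The decay is then one line:
\[
\eK^\phi_{t+\tau_\beta}\f=\eK^\phi_t\,\Lambda_1\Lambda_2\f=\Lambda_1\,\mathsf{K}^\beta_t\,\Lambda_2\f,
\]
and two applications of the Jensen/data-processing inequality for $\Phi$-entropy, one per $\Lambda_i$, together with the contraction for $\mathsf{K}^\beta$, produce the announced bound.

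\textbf{Main obstacle.}  The delicate step is establishing the interweaving structure underlying \eqref{thm:entropy}: producing the explicit kernels $\Lambda_1,\Lambda_2$ (polynomial-type operators readable in the Meixner basis), proving the two intertwinings, and, crucially, pinning down the \emph{exact} Laplace exponent $\phi_\beta$ of the composition $\Lambda_1\Lambda_2$.  The constraint $\beta>\mathrm{m}_\phi$ will emerge at this step, ensuring simultaneously that $\phi_\beta$ is Bernstein and that the $\Lambda_i$ are bona fide Markov kernels; this is precisely where the hypotheses $\sigma^2>0$ and $\ovl{\ovl{\Pi}}(0)<\infty$ enter the statement.  By contrast, part~\eqref{thm:l0_norm} is mostly a technical computation once the spectral expansion and the Bernstein--Gamma asymptotics of $W_\phi$ are in hand.
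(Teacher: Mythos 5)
Both halves of your plan contain a step that would fail. For the hypocoercive estimate, bounding the spectral expansion by the $\ell^1$-type sum $\sum_{k\ge 1}e^{-kt}\|\eigLdd_k\|_{\ell^2(\meKp)}\|\eigLd\|_{\ell^2(\meKp)}$ cannot produce the stated constant: writing $\varrho=\tfrac12\log(1+\sigma^{-2})$, one only has $\|\eigLdd_k\|_{\ell^2(\meKp)}\le e^{k\varrho}\sqrt{\mf{c}_k(\mathrm{m}_\phi)}$, so this series diverges as $t\downarrow\varrho$ and, when $\mathrm{m}_\phi-\mathrm{d}_\phi$ is large, exceeds $\sqrt{(\mathrm{m}_\phi+1)(1+\sigma^2)/(\sigma^2(\mathrm{d}_\phi+1))}\,e^{-t}$ on a whole range of $t$ that the trivial contraction bound does not cover either. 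The paper avoids this loss by using that $\bigl(\sqrt{\mf{c}_k(\mathrm{d}_\phi)}\,\eigLd\bigr)_{k\ge1}$ and $\bigl(e^{-k\varrho}\eigLdd_k/\sqrt{\mf{c}_k(\mathrm{m}_\phi)}\bigr)_{k\ge1}$ are both Bessel sequences with bound $1$ (Propositions~\ref{prop:eigen}\eqref{it:eignorm2} and \ref{prop:eigendual}\eqref{it:coeig1_bessel}): the coefficients are summed in $\ell^2$ via the upper frame inequality and the bounded synthesis operator, the ratio $e^{-2(k-1)(t-\varrho)}\mf{c}_k(\mathrm{m}_\phi)\mf{c}_1(\mathrm{d}_\phi)/\bigl(\mf{c}_k(\mathrm{d}_\phi)\mf{c}_1(\mathrm{m}_\phi)\bigr)$ is bounded by $1$ exactly for $t>T=\tfrac12\log\frac{\mathrm{m}_\phi+2}{\mathrm{d}_\phi+2}+\varrho$, and plain contractivity covers $t\le T$. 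Note also that $\mathrm{d}_\phi>0$ enters through the eigenfunction bound $\|\eigLd\|_{\ell^2(\meKp)}\le\mf{c}_k(\mathrm{d}_\phi)^{-1/2}$, not through the radius of convergence of the generating function of $\meKp$.

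For the entropy decay, your interweaving skeleton (two intertwinings, a factorization $\Lambda_1\Lambda_2=\eK^\phi_{\tau_\beta}$, and two applications of Jensen) is the correct mechanism, but your choice of reference semigroup breaks the argument. You take a reversible birth--death Laguerre chain and assert $\Phi$-entropy contraction for every admissible $\Phi$ from its spectral gap via Bakry--\'Emery or a modified log-Sobolev inequality. For a discrete (non-diffusion) generator the chain rule for the carr\'e-du-champ operator fails, so neither the spectral gap nor a curvature condition yields the inequality $\ent^\Phi_\mu(f)\le\mu(\Phi''(f)\Gamma(f))$; this is precisely the obstruction the section is built to circumvent. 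The paper instead interweaves $\eK^\phi$ with the Laguerre \emph{diffusion} $K^{\sigma^2}$ on $\RR_+$ (Theorem~\ref{thm:discPQ}), for which the diffusion property together with $CD(\tfrac12,\infty)$ gives the $\Phi$-entropy decay (Lemma~\ref{lem:ent_lag}); the discrete-to-continuous kernel $\Lambda$ and its adjoint are what make the composed kernel equal to $\eK^\phi_{\tau_\beta}$ with the stated $\phi_\beta$ (namely $\tau_\beta=\tau^{(\beta)}+2\varrho$). Your identification of $\phi_\beta$ and the final transfer are then fine, but as written the key contraction for your reference chain is unproved and not obtainable from the tools you cite.
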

Item~\eqref{thm:l0_norm} of the above theorem is proved in Section~\ref{ss:l0_norm} and item~\eqref{thm:entropy} is proved in Section~\ref{ss:entropy_pf}.
\begin{remark}
The estimate in \eqref{thm:l0_norm} gives the hypocoercivity, in the sense of Villani \cite{Villani-09}, for the skip-free Laguerre semigroups. This notion continues to attract a lot of interests, especially in the area of kinetic Fokker--Planck equations; see e.g.~Baudoin \cite{Baudoin} and Dolbeault et al.~\cite{Mouhot} and the references therein. Unlike this literature, we are able to identify the hypocoercive constants, namely the exponential decay rate as the spectral gap and the constant in front of the exponential, which is greater than $1$ as with $\sigma^2,\mathrm{d}_\phi>0$ we have $\mathrm{m}_\phi>\mathrm{d}_\phi$, is a measure of the deviation of the spectral projections from forming an orthogonal basis.  Note that in general, the hypocoercive constants may be difficult to identify and may have little to do with the spectrum. Results in the spirit of \eqref{thm:l0_norm} have already been obtained by Achleitner et al.~\cite{BGK}, Patie and Savov \cite{Patie-Savov-GeL} as well as in  Patie and Vaidyanathan \cite{PV-Hypo} where a general framework based on intertwining relation is developed.
\end{remark}

\subsection{Hypercontractivity} A Markov semigroup defined on the state space $E$ with invariant distribution $\mu$ is said to be hypercontractive if there exists $\alpha>0$ such that
\begin{align*}
	|\!|\!|P_t|\!|\!|_{\bmrm{L}(E,\mu)\to\bmrm[p(\alpha t)]{L}(E,\mu)}\le 1
\end{align*}
where $p(t)=1+e^t$ and \[|\!|\!|P_t|\!|\!|_{\bmrm{L}(E,\mu)\to\bmrm[p(\alpha t)]{L}(E,\mu)}=\sup_{f:\|f\|_{\bmrm{L}(E,\mu)}=1}\|P_t f\|_{\bmrm[p(\alpha t)]{L}(E,\mu)}.\] It is readily seen that the hypercontractivity reflects the regularity of the semigroup. For self-adjoint Markov semigroups, hypercontractivity can be interpreted in terms of their (modified) log-Sobolev constants, see \cite[Theorem~5.2.3]{Bakry_Book} and references therein. Nonetheless, even for the self-adjoint birth-death Laguerre chain, it is difficult to obtain a precise  value of the (modified) log-Sobolev constant. Using the concept of interweaving, see Section~\ref{ss:interweaving}, we circumvent this issue, and in fact, we are able to obtain the hypercontractivity estimates for (non self-adjoint) skip-free Laguerre semigroups up to a random warm-up time.
\begin{theorem}\hlabel{thm:hyper}
	If $\sigma^2>0$ and $\ovl{\ovl{\Pi}}(0)<\infty$, then, for all $\beta>\mathrm{m}_\phi=\frac{m+\ovl{\ovl{\Pi}}(0)}{\sigma^2}$ and $t\geq0$,
	\begin{align*}
		\left|\!\left|\!\left|\eK^\phi_{t+\tau_\beta}\right|\!\right|\!\right|_{\ell^2(\meKp)\to\ell^{p(t)}(\meKp)}\le 1
	\end{align*}
	where $\tau_\beta$ is defined  in \eqref{eq:tau_beta1}.
\end{theorem}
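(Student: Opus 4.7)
The plan is to deduce the hypercontractive bound for the non-self-adjoint semigroup $\eK^\phi$ by transferring the corresponding property from the self-adjoint Laguerre diffusion through the interweaving machinery developed in Subsection~\ref{ss:interweaving}. Concretely, I would invoke the interweaving between $\eK^\phi$ and the self-adjoint (generalized) Laguerre diffusion semigroup $Q^\beta=(Q^\beta_t)_{t\ge 0}$ on $\bb{R}_+$, with invariant Gamma-type measure $\nu_\beta$, which takes the semigroup form
\begin{align*}
\eK^\phi_{t+\tau_\beta}=\Lambda_\beta\, Q^\beta_t\, \widehat{\Lambda}_\beta, \qquad t\ge 0,
\end{align*}
with $\Lambda_\beta:\bmrm[2]{L}(\nu_\beta)\to \ell^2(\meKp)$ and $\widehat{\Lambda}_\beta:\ell^2(\meKp)\to \bmrm[2]{L}(\nu_\beta)$ two Markov kernels. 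The random time shift $\tau_\beta$ arises from the subordination effect of composing these two kernels at the semigroup level and, as required by \eqref{eq:tau_beta1}, has Laplace exponent $\phi_\beta$; the identification of $\phi_\beta$ as a genuine Bernstein function, hence of $\tau_\beta$ as an infinitely divisible positive random variable, follows from the closure properties of $\Be$ applied to the Gamma-ratio structure of $\phi_\beta$.

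The next step is to use classical hypercontractivity of the Laguerre diffusion, namely
\begin{align*}
\vvv Q^\beta_t\vvv_{\bmrm[2]{L}(\nu_\beta)\to \bmrm[p(t)]{L}(\nu_\beta)}\le 1, \qquad p(t)=1+e^t,
\end{align*}
which, via Gross's equivalence, reduces to the log-Sobolev inequality for the Gamma invariant measure $\nu_\beta$. Coupled with Jensen's inequality, which makes any Markov kernel a contraction on every $\bmrm[p]{L}$-norm, I would then chain the three bounds:
\begin{align*}
\|\eK^\phi_{t+\tau_\beta}\f\|_{\ell^{p(t)}(\meKp)}
&= \|\Lambda_\beta\, Q^\beta_t\, \widehat{\Lambda}_\beta \f\|_{\ell^{p(t)}(\meKp)}
\le \|Q^\beta_t\, \widehat{\Lambda}_\beta \f\|_{\bmrm[p(t)]{L}(\nu_\beta)} \\
&\le \|\widehat{\Lambda}_\beta \f\|_{\bmrm[2]{L}(\nu_\beta)}
\le \|\f\|_{\ell^2(\meKp)},
\end{align*}
which is precisely the claimed operator norm bound $\vvv\cdot\vvv_{\ell^2(\meKp)\to \ell^{p(t)}(\meKp)}\le 1$.

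The hard part is the structural identification underlying the interweaving, namely that the two Markov intertwiners $\Lambda_\beta$ and $\widehat{\Lambda}_\beta$ do compose, at the semigroup level, into a subordinator with Laplace exponent precisely equal to the $\phi_\beta$ appearing in \eqref{eq:tau_beta1}. This calibration requires matching the spectral data on the two sides — the discrete eigen- and co-eigenfunctions $\eigLd,\eigLdd_k$ of Theorem~\ref{thm:eig_coeig} with the Laguerre polynomials of the diffusion — and is exactly what the construction in Subsection~\ref{ss:interweaving} delivers. Once this is in hand, the hypercontractivity argument becomes formally parallel to the $\Phi$-entropy decay of Theorem~\ref{thm:spgap_ent}(\ref{thm:entropy}): both rely only on Markov contractivity of the intertwiners together with the respective $\bmrm[p]{L}$-contraction, respectively $\Phi$-entropy contraction, property of the reference self-adjoint semigroup.
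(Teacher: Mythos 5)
Your proposal is correct and follows essentially the same route as the paper: Theorem~\ref{thm:discPQ} provides the symmetric interweaving $\eK^\phi_{t+\tau_\beta}=\Lambda\,K_t\,\Lambda'$ with two Markov kernels mapping the invariant measures onto one another, the reference self-adjoint Laguerre diffusion is hypercontractive with $p(t)=1+e^t$ (log-Sobolev constant $1$, via Bakry), and Jensen's inequality contracts the two outer kernels --- precisely the chaining of \cite[Theorem~9]{miclo_patie_2} that the paper invokes. The only (cosmetic) difference is the choice of reference semigroup: the paper uses $K^{\sigma^2}$ with exponential invariant measure $\nu_{\sigma^2}$ and verifies $c_{LS}=1$ for it explicitly, whereas you factor through the $\beta$-Laguerre diffusion with Gamma invariant measure; both factorizations are available from the kernels $\Mphi\wi{\Lambda}_{\sigma^2}$ and $\Upsilon=d_{\frac{1}{\sigma^2}}\wi{\mathrm{B}}_\beta\mathrm{V}_\beta\Lambda$ built in the proof of Theorem~\ref{thm:discPQ}, and both reference diffusions satisfy $CD(\tfrac12,\infty)$, hence the same hypercontractive profile.
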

This theorem is proved in Section~\ref{ss:hyper_pf}.

\subsection{Bochner subordination of skip-free Laguerre chains} In the previous two sections we have seen that Theorem~\ref{thm:entropy} and Theorem~\ref{thm:hyper} hold for skip-free Laguerre semigroups up to a random warm-up or delay time denoted by $\tau_\beta$. However, applying a time-change on the skip-free Laguerre chains, we can obtain a new class of skip-free Markov chains for which the above theorems hold with a deterministic warm-up or delay time. In other words, we obtain a new class of Markov chains (with two sided-jumps of arbitrary size) for which the quantity $\tau_\beta$ can be replaced by a deterministic number. Since $\tau_\beta$ is an infinitely divisible random variable with $\phi_\beta\in\Be$ as its L\'evy-Khintchine exponent, one can consider the subordinator $(\tau_\beta(t),t\ge 0)$ such that $\tau_\beta(1)\stackrel{(d)}{=}\tau_\beta$. With an abuse of notation, we still denote this subordinator by $\tau_\beta$. Now, let us consider the subordinated Laguerre semigroup defined by
\begin{align}
	\eK^{\phi,\tau_\beta}_t=\int_0^\infty\eK^\phi_s\ \bb{P}(\tau_\beta(t)\in ds).
\end{align}
Since $\lim_{t\to\infty}\tau_\beta(t)=\infty$ almost surely, the semigroup $\eK^{\phi,\tau_\beta}$ has the same invariant measure $\meKp$. Below, we provide the spectral expansion, the $\Phi$-entropy convergence and the hypercontractivity property of $\eK^{\phi,\tau_\beta}$.
\begin{theorem}\hlabel{thm:subordination} Let $\phi\in\Be$ be associated with the triplet $(m,\sigma^2,\Pi)$.
	\begin{enumerate}
		\item \textbf{Spectral Expansion.}\hlabel{it:sbo_spect} If $\sigma^2>0$ then for all $\beta>0$, $\f\in\ell^2(\meKp)$ and $t>\frac{1}{2}$ we have
		\begin{align*}
			\eK_t^{\phi,\tau_\beta}\f=\sum_{k=0}^\infty e^{-t\phi_\beta(k)}\langle\f,\eigLdd_k\rangle_{\meKp}\eigLd.
		\end{align*}
		\item \textbf{$\Phi$-entropy decay.}\hlabel{it:sbo_ent} If $\sigma^2>0,\ \ovl{\ovl{\Pi}}(0)<\infty$ and $\beta>\mathrm{m}_\phi$, then for all admissible (see \eqref{eq:admissible} for definition) function $\Phi$ and $\f$ such that $\f,\Phi(\f)\in\ell^1(\meKp)$, we have, for all $t\ge 0$,
		\[\ent^\Phi_{\meKp}\left(\eK_{t}^{\phi,\tau_\beta}\f\right)\le e^{-\phi_\beta(1)(t-1)_+}\ent^\Phi_{\meKp}(\f)\]
where $t_+=\max(t,0)$.
		\item \textbf{Hypercontractivity.}\hlabel{it:sbo_hyp} If $\sigma^2>0$, $\ovl{\ovl{\Pi}}(0)<\infty$ and $\beta>\mathrm{m}_\phi$, then, for all $t\ge 0$,
		\[\left|\!\left|\!\left|\eK^{\phi,\tau_\beta}_{t+1}\right|\!\right|\!\right|_{\ell^2(\meKp)\to\ell^{q(t)}(\meKp)}\le 1\]
		where $q(t)=1+(1+\sigma^{-2})^t$.
	\end{enumerate}
\end{theorem}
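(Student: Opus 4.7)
The plan is to transfer each of the three statements from the unsubordinated semigroup $\eK^\phi$ via two elementary facts about the subordinator: the Laplace identity $\E[e^{-u\tau_\beta(t)}]=e^{-t\phi_\beta(u)}$, and the deterministic lower bound $\tau_\beta(t)\ge t\log(1+\sigma^{-2})$ coming from the linear drift in $\phi_\beta$ (visible in \eqref{eq:tau_beta1}). The second device I will rely on is the Markov splitting $\tau_\beta(1+s)\stackrel{(d)}{=}\tau_\beta(1)+\tau'$ with $\tau'\stackrel{(d)}{=}\tau_\beta(s)$ independent, which isolates the warm-up $\tau_\beta(1)=\tau_\beta$ appearing in both Theorems~\ref{thm:spgap_ent}\eqref{thm:entropy} and~\ref{thm:hyper}.

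For \eqref{it:sbo_spect}, I will apply the spectral expansion~\eqref{eq:spect_exp} pathwise to $\eK^\phi_{\tau_\beta(t)}\f$: the drift bound ensures $\tau_\beta(t)>\frac{1}{2}\log(1+\sigma^{-2})$ almost surely precisely when $t>1/2$, which is the hypothesis of Theorem~\ref{thm:eig_coeig}\eqref{it:spectral_exp}. Integrating against the law of $\tau_\beta(t)$ and applying the Laplace identity term by term will produce the eigenvalues $e^{-t\phi_\beta(k)}$; the interchange of sum and expectation will be justified by dominated convergence, using $e^{-k\tau_\beta(t)}\le e^{-kt\log(1+\sigma^{-2})}$ as an integrable majorant together with the norm control already embedded in Theorem~\ref{thm:eig_coeig}\eqref{it:spectral_exp}.

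For \eqref{it:sbo_ent} with $t\ge 1$, the Markov splitting with $s=t-1$ and the semigroup property of $\eK^\phi$ yield
\begin{equation*}
\eK^{\phi,\tau_\beta}_t\f=\E_{\tau'}\!\left[\eK^\phi_{\tau'+\tau_\beta}\f\right],
\end{equation*}
where the inner object is exactly the warm-up notation of Theorem~\ref{thm:spgap_ent}\eqref{thm:entropy} (averaging over $\tau_\beta(1)$ is performed first). Conditioning on $\tau'=s$, that theorem gives $\ent^\Phi_{\meKp}(\eK^\phi_{s+\tau_\beta}\f)\le e^{-s}\ent^\Phi_{\meKp}(\f)$, and since admissibility of $\Phi$ makes $\ent^\Phi_{\meKp}$ convex in its argument, Jensen's inequality delivers $\ent^\Phi_{\meKp}(\eK^{\phi,\tau_\beta}_t\f)\le \E_{\tau'}[e^{-\tau'}]\,\ent^\Phi_{\meKp}(\f)=e^{-(t-1)\phi_\beta(1)}\ent^\Phi_{\meKp}(\f)$. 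For $0\le t<1$, the trivial Jensen contraction $\ent^\Phi_{\meKp}(\eK^{\phi,\tau_\beta}_t\f)\le\ent^\Phi_{\meKp}(\f)$ matches $(t-1)_+=0$.

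For \eqref{it:sbo_hyp}, the same splitting with $\tau'\stackrel{(d)}{=}\tau_\beta(t)$ gives $\eK^{\phi,\tau_\beta}_{t+1}\f=\E_{\tau'}[\eK^\phi_{\tau'+\tau_\beta}\f]$, and Theorem~\ref{thm:hyper} yields $\|\eK^\phi_{\tau'+\tau_\beta}\f\|_{\ell^{p(\tau')}(\meKp)}\le\|\f\|_{\ell^2(\meKp)}$ almost surely, with $p(s)=1+e^s$. The drift bound forces $p(\tau')\ge 1+(1+\sigma^{-2})^t=q(t)$ almost surely, so monotonicity of $\ell^r(\meKp)$-norms on the probability space $(\ZZ_+,\meKp)$ downgrades the exponent to $q(t)$ without loss, and Minkowski's integral inequality in $\ell^{q(t)}(\meKp)$, valid since $q(t)\ge 2$, closes the estimate. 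The main subtlety across all three items is the ordering of expectations in the Markov splitting: the average over $\tau_\beta(1)$ must be carried out before the one over $\tau'$, so that each estimate, phrased in terms of the warm-up $s+\tau_\beta$, applies verbatim; once this is set up correctly, each proof reduces to an application of Jensen / Minkowski combined with the Laplace identity $\E[e^{-\tau_\beta(s)}]=e^{-s\phi_\beta(1)}$.
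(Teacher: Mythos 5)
Your proposal is correct and follows essentially the same route as the paper: the a.s.\ lower bound $\tau_\beta(t)\ge t\log(1+\sigma^{-2})$ is just the paper's explicit decomposition $\tau_\beta(t)=2\varrho t+\widetilde{\tau}_\beta(t)$ in disguise, and the three items are handled exactly as in the text (Fubini with the eigen/co-eigenfunction norm bounds for the spectral expansion, the Markov splitting plus Jensen for the entropy decay, and Minkowski with monotonicity of the $\ell^r(\meKp)$-norms for hypercontractivity). Your explicit treatment of the range $0\le t<1$ in item \eqref{it:sbo_ent} is a small but welcome addition that the paper leaves implicit.
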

This theorem is proved in Section~\ref{sec:thlast}.

\section{Examples}\hlabel{s:exm}
\subsection{The discrete Laguerre chains and the Meixner polynomials}\hlabel{ex:1} Let us consider the Bernstein function $\phi(u)=\sigma^2 u+m$ where $\sigma^2>0, m\ge 0$. If $\eK^\phi$ is the skip-free Laguerre semigroup associated with $\phi$, then the generator is given by
\[ \eL_\phi(n,l)=\begin{cases} \sigma^2n+m+\sigma^2+1 & \mbox{if $l=n+1$} \\
	(1+\sigma^2)n & \mbox{if $l=n-1$} \\
	-(1+2\sigma^2)n-m-\sigma^2-1 & \mbox{if $l=n$} \\
	0 & \mbox{otherwise}.
 \end{cases}\]
From Proposition~\ref{prop:n_phi}, the unique invariant distribution of $\eL_{\phi}$ is given by
\[\meK_\phi(n)=\frac{\Gamma\left(n+\frac{m}{\sigma^2}+1\right)}{\Gamma\left(\frac{m}{\sigma^2}+1\right)n!}2^{-n-\frac{m}{\sigma^2}-1}, \ n\in\ZZ_+.\]
The semigroup $\eK^\phi$ is self-adjoint in $\ell^2(\meKp)$ and it follows from Theorem~\ref{thm:eig_coeig} that the eigenfunctions $\eigL^\phi$ of $\eK^\phi_t$ corresponding to its eigenvalue $e^{-kt}$ form an orthogonal sequence in $\ell^2(\meKp)$. More specifically, writing \tred{$\beta=\frac{m}{\sigma^2}$}, for all $k\in\ZZ_+$,
	\begin{eqnarray*}\hlabel{eq:def_pbeta}
		\eigL^{\phi}(n)&=&\left(1+\sigma^{-2}\right)^{-\frac{k}{2}}\Gamma(\beta+1)\sum_{r=0}^{k} (-\sigma)^{-2r}  { k \choose r} \frac{\mathsf{p}_r(n)}{\Gamma(r+\beta+1)}\\&=&\left(1+\sigma^{-2}\right)^{-\frac{k}{2}}
		{}_2F_1\left(-n,-k,\beta+1;-\sigma^{-2}\right)
	\end{eqnarray*}
	where
	\begin{align}\hlabel{eq:hyper-geom}
	{}_{2}F_{1}(a,b;c;x)=\frac{\Gamma(c)}{\Gamma(a)\Gamma(b)}\sum_{r=0}^{\infty}\frac{\Gamma(r+a)\Gamma(r+b)}{\Gamma(r+c)}\frac{x^r}{r!}.
	\end{align}
From \cite[Equation (7)]{Karlin-McG}, it follows that
	\begin{equation*}\hlabel{eq:eigen_norm}
		\left\|\eigL^{\phi}\right\|^2_{\ell^2(\meKp)} =\mf{c}_k\left(\beta\right)^{-1}
	\end{equation*}
where for any $a>0$, $\mf{c}_k(a)=\frac{\Gamma(a+k+1)}{\Gamma(a+1)\Gamma(k+1)}$.
Finally, for all $\mathsf{f}\in\ell^2(\meKp)$ and $t>0$, we have, in $\ell^2(\meKp)$,
	\begin{equation*}\hlabel{eq:spec-exp-rev}
		\eK^{\phi}_t \mathsf{f} = \sum_{k=0}^{\infty}\mf{c}_{k}\left(\frac{m}{\sigma^2}\right)e^{-kt} \left\langle \mathsf{f}, \eigL^{\phi} \right\rangle_{\mathbf{n}_\phi}\!\! \eigL^{\phi}.
	\end{equation*}

\subsection{The perturbed Laguerre skip-free chain}
Consider the Bernstein function defined for $\mf{m}>1$ by
\[\phi_{\mf{m}}(u)=\frac{(u+\mf{m}+1)(u+\mf{m}-1)}{u+\mf{m}}=\frac{\mf{m}^2-1}{\mf{m}}+u+\int_0^\infty(1-e^{-uy})e^{-\mf{m}y}dy.\]
Let $\dBg_{\phi_{\mf{m}}}$ be the generator of the discrete self-similar Markov semigroup associated with $\phi_{\mf{m}}$. Then, according to \eqref{eq:skip_free_gen}, $\sigma^2=1, m=\frac{\mf{m}^2-1}{\mf{m}}$ and $\Pi(dy)=\mf{m}e^{-\mf{m}y}dy$. So, the infinitesimal generator $\dBg_{\mf{m}}$ is given by
\begin{align*}
	\dBg_{\phi_{\mf{m}}}(n,l)=\begin{cases}
		\frac{\mf{m}\Gamma(l+\mf{m})\Gamma(n-l+2)}{(n+1)\Gamma(n+\mf{m}+2)} & \mbox{if $l\in\lin 0,n-2\rin$}\vspace{.2cm} \\
		\frac{2\mf{m}}{(n+1)(n+\mf{m})(n+\mf{m}+1)}+n & \mbox{if $l=n-1$} \vspace{.2cm} \\
		\mf{m}-\frac{1}{\mf{m}+n+1} & \mbox{if $l=n+1$} \vspace{.2cm} \\
		\frac{\mf{m}}{(n+\mf{m})(n+\mf{m}+1)}-\frac{1}{\mf{m}} & \mbox{if $l=n$} \vspace{0.2cm}\\
		0 & \mbox{if $l>n+1$}.
	\end{cases}
\end{align*}
Now, the corresponding skip-free Laguerre chain has the generator $\eL_{\phi_{\mf{m}}}$ given by
\[\eL_{\phi_{\mf{m}}}(n,l)=\begin{cases}
	\dBg_{\mf{m}}(n,l) & \mbox{if $l\neq n,n-1$} \\
	\dBg_{\mf{m}}(n,n-1)+n & \mbox{if $l=n-1$} \\
	\dBg_{\mf{m}}(n,n)-n & \mbox{if $l=n$}.
\end{cases}\]
From Proposition~\ref{prop:n_phi}, the unique invariant distribution of $\eLm$ is given by
\begin{align*}
	\mathbf{n}_{\phi_\mf{m}}(n)=\frac{(n+\mf{m}+1)\Gamma(n+\mf{m})}{(\mf{m}+1)\Gamma(\mf{m})n!}2^{-(n+\mf{m}+1)}, \ n\in\ZZ_+.
\end{align*}
Let us compute the eigenfunctions and co-eigenfunctions of the semigroup $\eK^{\phi_\mf{m}}$ generated by $\eLm$. Denoting the eigenfunction (resp.~the co-eigenfunction) of $\eK^{\phi_\mf{m}}_t$ corresponding to the eigenvalue (resp.~co-eigenvalue) $e^{-kt}$ by $\eigL^{\phi_\mf{m}}$ (resp.~ $\coeig^{\phi_\mf{m}}_k$), we have
\begin{eqnarray*}
	\eigL^{\phi_\mf{m}}(n)&=&2^{-\frac{k}{2}}\sum_{r=0}^k(-1)^r\frac{\binom{k}{r}}{W_{\phi_{\mf{m}}}(r+1)}\mathsf{p}_r(n)
	\\&=&2^{-\frac{k}{2}}\left[(\mf{m}+1){}_2F_1\left(-k,-n,\mf{m}+1;-1\right)-{}_2F_1\left(-k,-n,\mf{m}+2;-1\right)\right], \\
	\coeig^{\phi_\mf{m}}_k(n)&=&\frac{2^{-\frac{k}{2}}}{\meKp(n)}\frac{\Gamma(n+k+\mf{m})}{k!n!}\left((n+\mf{m}+k){}_2F_1(-k,-n,-n-k-\mf{m};2) \right. \\&&\left.+{}_2F_1(-k,-n,-n-k-\mf{m}+1;2)\right)
\end{eqnarray*}
where ${}_2F_1$ is the hypergeometric function defined in \eqref{eq:hyper-geom}.
\subsection{The Beta skip-free chain} We consider the Bernstein function $\phi_{\ttt{m}}$ corresponding to a compound Poisson process with exponential jumps which is defined, for $\ttt{m}>1$ and $u>0$, by
\[\phi_{\ttt{m}}(u)=\frac{u}{\ttt{m}(u+\ttt{m})}=\int_0^\infty(1-e^{-uy})e^{-\ttt{m}y}dy.\]
Therefore, according to \eqref{eq:bernstein_def}, $\sigma^2=0, m=0$, $\Pi(dy)=\ttt{m}e^{-\ttt{m}y}dy$ and $\phi_{\ttt{m}}(\infty)=\frac{1}{\ttt{m}}$. If $L_{\phi_\ttt{m}}$ denotes the generator of the Laguerre semigroup corresponding to $\phi_\ttt{m}$ in continuous state space, we have for all $f\in\esC^\infty_c(\RR_+)$,
\begin{equation*}
	{{L}}_{\phi_\ttt{m}}f(x) = -x f'(x) + \frac{\ttt{m}}{x}\int^{\infty}_{0} \left(f(e^{-y}x)-f(x)+yxf'(x)\right)  e^{-\ttt{m}y} dy.
\end{equation*}
The Bernstein-gamma function associated with $\phi_\ttt{m}$ is
\[W_{\phi_\ttt{m}}(k+1)=\frac{\Gamma(\ttt m+1)\Gamma(k+1)}{\ttt{m}^k\Gamma(k+1+\ttt{m})}, \ k\in\ZZ_+.\]
From \cite[Proposition~2.6(1)]{Patie-Savov-GeL}, the semigroup generated by $L_{\phi_\ttt{m}}$ admits an unique invariant measure $\nu_{\phi_\ttt{m}}$ which is absolutely continuous with moment sequence $\left(W_{\phi_\ttt{m}}(k+1)\right)_{k\ge 0}$ and given by
\begin{equation*}
	\nu_{\phi_\ttt{m}}(dx)=
	\ttt{m}^2 (1-\ttt{m}x)^{\ttt{m}-1}dx,\  0<x<\frac{1}{\ttt m}.
\end{equation*}
Now coming back to the corresponding skip-free Laguerre chain in the discrete state space, \eqref{eq:inv_disc} implies that the unique invariant distribution of its semigroup $\eK^{\phi_\ttt m}$ is
\begin{align*}
	\meK_{\phi_\ttt m}(n)=&\frac{1}{n!}\sum_{r=0}^\infty W_{\phi_\ttt m}(n+r+1)\frac{(-1)^r}{r!} \\
	=&\frac{1}{n!}\sum_{r=0}^\infty \frac{\Gamma(\ttt m+1)\Gamma(n+r+1)}{\ttt{m}^{n+r}\Gamma(n+r+\ttt m+1)}\frac{(-1)^r}{r!}\\
	=&\frac{1}{\ttt m^n}\frac{\Gamma(\ttt m+1)}{\Gamma(n+\ttt{m}+1)}{}_1F_1\left(n,n+\ttt m;\frac{1}{\ttt m}\right)
\end{align*}
where ${}_1F_1$ is an hypergeometric function.
Finally, from Proposition~\ref{prop:eigen}, the eigenfunction of $\eK^{\phi_\ttt m}_t$ corresponding to $e^{-kt}$ is given by
\begin{align*}
	\eigL^{\phi_\ttt m}(n)=2^{-\frac{k}{2}}\sum_{r=0}^k(-1)^r\dbinom{k}{r}\frac{\mathsf{p}_r(n)}{W_{\phi_\ttt m}(r+1)}=2^{\frac{k}{2}}{}_3F_1(-k,-n,\ttt{m}+1;1;-\ttt{m})
\end{align*}
where ${}_3F_1(a,b,c;d;x)=\frac{\Gamma(d)}{\Gamma(a)\Gamma(b)\Gamma(c)}\sum_{r=0}^\infty\frac{\Gamma(r+a)\Gamma(r+b)\Gamma(r+c)}{\Gamma(r+d)}\frac{x^r}{r!}$ and
$\coeig^{\phi_\ttt m}_k$ is given by \eqref{eq:coeig}.
\section{Proof of the Main Results}\hlabel{s:proofs}
 We begin this section with some useful facts about to the discrete dilation operator.
\begin{proposition}\hlabel{prop:D_alpha}
	\begin{enumerate}
		\item\hlabel{it:d1} For all $\alpha>0$ and $\f\in\mathbf{C}_b(\ZZ_+)$,
		\begin{equation}\hlabel{eq:intdD}
			d_\alpha\Lambda \f=\Lambda \dD_\alpha \f
		\end{equation}
		where $d_\alpha f(x)=f(\alpha x)$ is the usual dilation operator on $\RR_+$ and  $\Lambda$ is as in Theorem \ref{thm:gateway_main}.
		\item\hlabel{it:d2} For all $\f\in\mathbf{C}(\ZZ_+)$, and $\alpha,\beta>0$, $\dD_{\alpha\beta}\f=\dD_\alpha\dD_\beta\f$.
		\item\hlabel{it:d3} $\dD_1=\mathrm{Id}$ and for all $\alpha>0$, $\dD^{-1}_\alpha=\dD_{1/\alpha}$.
\item \hlabel{it:d4} $(\dD_{e^{-t}})_{t\geq0}$ (resp.~$(d_{e^{-t}})_{t\geq0}$ form a semigroup on $\ell^2(\ZZ_+)$ (resp. on $\bmrm{L}(\RR_+)$) with generator $\partial^n_-=n\partial_-$ (resp.~$\partial^x=-x\frac{d}{dx}$) with $\partial^x \Lambda =\Lambda \partial^n_-$ on $\esC_c(\ZZ_+) $.
	\end{enumerate}
\end{proposition}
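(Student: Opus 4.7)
The plan is to prove the four items in order, leveraging item (1) (the Poissonian gateway) as a clean bridge to the continuous dilation whenever convenient, but also providing direct combinatorial proofs when they are quick.

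For item (1), I would start from the definition $\Lambda \f(x) = e^{-x}\sum_{n\ge 0} \f(n)\frac{x^n}{n!}$ and compute
\[
\Lambda \dD_\alpha \f(x) \;=\; e^{-x}\sum_{n\ge 0}\frac{x^n}{n!}\sum_{r=0}^n \binom{n}{r}\alpha^r(1-\alpha)^{n-r}\f(r),
\]
then swap the order of summation, substitute $m=n-r$, and recognize $e^{x(1-\alpha)}$ from the inner exponential series. The computation reduces instantly to $e^{-\alpha x}\sum_{r\ge 0}\f(r)\frac{(\alpha x)^r}{r!}=\Lambda \f(\alpha x)=d_\alpha\Lambda \f(x)$. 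For $\alpha\in(0,1)$ everything is absolutely convergent; for general $\alpha>0$ I would justify the swap by dominated convergence on the dominating series $\sum_n \frac{x^n}{n!}\|\f\|_\infty(1+|1-\alpha|)^n$, which is finite for each $x$.

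For item (2), the cleanest route is a direct computation, expanding $\dD_\alpha\dD_\beta \f(n)$ as a double sum, using the standard identity $\binom{n}{r}\binom{r}{s}=\binom{n}{s}\binom{n-s}{r-s}$, and applying the binomial theorem to collapse the inner sum, landing on
\[
\dD_\alpha\dD_\beta \f(n)=\sum_{s=0}^n\binom{n}{s}(\alpha\beta)^s(1-\alpha\beta)^{n-s}\f(s)=\dD_{\alpha\beta}\f(n).
\]
Alternatively one may test on the monomials $\f(n)=z^n$, for which $\dD_\alpha\f(n)=(1-\alpha+\alpha z)^n$, making the semigroup property obvious, and conclude by linearity together with injectivity of the evaluation at $\ZZ_+$. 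Item (3) is then immediate: $\dD_1\f(n)=\f(n)$ from the collapsed binomial sum (only $r=n$ survives), and $\dD_\alpha\dD_{1/\alpha}=\dD_1=\mathrm{Id}$ by item (2).

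For item (4), the semigroup property of $(\dD_{e^{-t}})_{t\ge 0}$ is an immediate corollary of item (2). To identify the generator I would differentiate $\dD_\alpha \f(n)$ in $\alpha$ at $\alpha=1$ for $\f\in\esC_c(\ZZ_+)$ (so only finitely many terms matter): only the terms with $n-r=0$ and $n-r-1=0$ survive, yielding $\frac{d}{d\alpha}\dD_\alpha\f(n)\big|_{\alpha=1}=n\f(n)-n\f(n-1)=-n\partial_-\f(n)$; the chain rule with $\alpha=e^{-t}$ supplies the extra sign, giving generator $n\partial_-=\partial^n_-$. The continuous analogue is the standard computation $\frac{d}{dt}f(e^{-t}x)\big|_{t=0}=-xf'(x)$. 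Finally, the intertwining $\partial^x\Lambda=\Lambda\partial^n_-$ on $\esC_c(\ZZ_+)$ follows by differentiating the identity $d_{e^{-t}}\Lambda\f=\Lambda\dD_{e^{-t}}\f$ (item (1)) at $t=0$; the interchange of $\frac{d}{dt}$ and $\Lambda$ is trivial for compactly supported $\f$ since $\Lambda \f$ is then entire in $x$.

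The only mildly delicate point is the convergence issue in item (1) when $\alpha>1$ (so that $\dD_\alpha\f$ itself need not be bounded), but this is handled by the dominated convergence estimate above; once item (1) is secured, items (2)-(4) require no further analytic input beyond the finite binomial manipulations and a one-line differentiation.
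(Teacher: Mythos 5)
Your proof is correct, but it travels a genuinely different (and more self-contained) route than the paper's. The paper proves item (1) by citation to \cite[Proposition~1]{miclo_patie}, then obtains item (2) \emph{from} item (1): it writes $\Lambda\dD_{\alpha\beta}=d_{\alpha\beta}\Lambda=d_\alpha d_\beta\Lambda=\Lambda\dD_\alpha\dD_\beta$ and invokes the injectivity of $\Lambda$ on $\esC_b(\ZZ_+)$; for item (4) it likewise cites \cite[Lemma~4.5]{miclo_patie} for the identity $\partial^x\Lambda=\Lambda\partial^n_-$ and then deduces the generator formula $\frac{d}{dt}\dD_{e^{-t}}\f_{|t=0}=\partial^n_-\f$ by injectivity. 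You instead prove item (1) by the direct Fubini/exponential-series computation, item (2) by the binomial identity $\binom{n}{r}\binom{r}{s}=\binom{n}{s}\binom{n-s}{r-s}$, and the generator by differentiating the kernel in $\alpha$ at $\alpha=1$, recovering the intertwining $\partial^x\Lambda=\Lambda\partial^n_-$ as a corollary rather than an input. Two advantages of your route: it avoids the external citations entirely, and your combinatorial proof of item (2) involves only finite sums, so it covers arbitrary $\f\in\esC(\ZZ_+)$ exactly as the statement claims, whereas the injectivity argument as written only directly handles bounded $\f$. One small slip: your dominating series for the Fubini swap uses $(1+|1-\alpha|)^n=\alpha^n$ for $\alpha>1$, whereas the correct row bound is $(\alpha+|1-\alpha|)^n=(2\alpha-1)^n$ (the constant appearing in the paper's estimate \eqref{eq:est_lambda}); this changes nothing since $\sum_n\frac{x^n}{n!}(2\alpha-1)^n$ still converges, but the exponent base should be corrected.
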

\begin{remark}
There are several analogies between the two dilation operators which make our choice of the discrete one natural. Indeed, as its continuous analogue, the discrete dilation  operator is a multiplicative semigroup, and, the generator of its associated additive semigroup is the discrete analogue of the continuous one, see item \ref{it:d4}. However, unlike in the  continuous  case, $\dD_\alpha$ does not have bounded inverse when $\alpha\in (0,1)$.
\end{remark}
\begin{proof}
The first item follows from \cite[Proposition~1]{miclo_patie}. Next,
	we note that, for any $\f\in\esC_b(\ZZ_+)$ and $\alpha>0$,
\begin{align}\hlabel{eq:est_lambda}
|\dD_\alpha\f(n)|\le |(2\alpha-1)|^n\|\f\|_\infty.
\end{align}
Then, \eqref{eq:est_lambda} implies that both $\Lambda\dD_\alpha \f$ and $d_\alpha\Lambda\f$ are well defined. Now, \eqref{it:d1} yields
\begin{align*}
	\Lambda\dD_{\alpha\beta}=d_{\alpha\beta}\Lambda=d_\alpha d_\beta\Lambda=d_\alpha\Lambda \dD_\beta=\Lambda\dD_\alpha\dD_\beta.
\end{align*}
Since $\Lambda:\esC_b(\bb Z_+)\to\esC_b(\bb{R}_+)$ is injective, see \cite[Lemma~4(4))]{miclo_patie}, item~\eqref{it:d2} follows. Item~\eqref{it:d3} is a direct consequence of item~\eqref{it:d2}. For item~\eqref{it:d4}, it is immediate from item~\eqref{it:d2} that $(\dD_{e^{-t}})_{t\ge 0}$ is a translation semigroup on $\esC_0(\ZZ_+)$. Moreover, from \eqref{eq:intdD} we have that for all $t\ge 0$ and $\f\in\esC_0(\ZZ_+)$
\[d_{e^{-t}}\Lambda\f=\Lambda\dD_{e^{-t}}\f.\] Differentiating the above identity with respect to $t$ when $\f\in\esC_c(\ZZ_+)$ and noting that $d_{e^{-t}}=e^{t\partial^x}$ one obtains that
\[\partial^x\Lambda\f=\Lambda\frac{d}{dt}\dD_{e^{-t}}\f_{|_{t=0}}\]
where we used that $\Lambda$ is a bounded operator. However, from \cite[Lemma~4.5]{miclo_patie}, after observing that $\Lambda=\nabla^{-1}$, we have $\partial^x\Lambda\f=\Lambda\partial^n_-\f$ whenever $\f\in\esC_c(\ZZ_+)$. Since $\Lambda$ is injective on $\esC_c(\ZZ_+)$, we conclude that for all $\f\in\esC_c(\ZZ_+)$ one has
\[\frac{d}{dt}\dD_{e^{-t}}\f_{|_{t=0}}\!\!=\partial^n_-\f\]
which proves item~\eqref{it:d4}.
\end{proof}

\subsection{Proof of Theorem~\ref{feller_skip}}\hlabel{ss:gen_disc} It is not difficult to see that the operator $\dBg_\Pi$ can be simplified as follows. We can write
$\dBg_\Pi f(n)=\sum_{l=0}^{n+1}\dBg(n,l)\mathsf{f}(l)$ where
\begin{align}\hlabel{eq:jump}
	\dBg_\Pi(n,l)=
	\begin{cases}
		\int_0^\infty \frac{1}{n+1}\dbinom{n+1}{l}e^{-ly}(1-e^{-y})^{n-l+1}\Pi(dy) & \mbox{if $l\in\lin 0, n-1\rin$  } \vspace{0.3cm}\\
		\int_0^\infty \frac{1}{n+1}\left(e^{-(n+1)y}-1+(n+1)y\right)\Pi(dy) & \mbox{if $l=n+1$} \vspace{0.3 cm} \\
		0 & \mbox{if $l>n+1$}
	\end{cases}
\end{align}
and $\dBg_\Pi(n,n)=-\sum_{l\neq n}\dBg_\Pi(n,l)$.

 To show that $\dBg_\phi$ is a Markov generator, we need to show that $\dBg_\phi(n,l)\ge 0$ for all $n\neq l$. From the expression of $\dBg_\phi$ in \eqref{eq:skip_free_gen}, it is enough to show that $\dBg_\Pi(n,l)\ge 0$ for all $l\neq n$, a fact which follows readily from \eqref{eq:jump}. To get that $\dBg_\phi$ generates a Feller semigroup on $\mathbf{C}_0(\ZZ_+)$, we wish to combine  Theorem~3.2 with Corollary~3.2 from  \cite[Chapter 8]{ethier-kurtz}. To this end, the following four conditions need to be checked
\begin{enumerate}[(i)]
\item \hlabel{cond1} $\sup_{n\in\bb{Z}_+}\frac{|\dBg_\phi(n,n)|}{n+1}<\infty$
\item \hlabel{cond2} $\lim_{n\to\infty} \dBg_\phi(n,l)=0$ for all $l\in\bb{Z}_+$
\item \hlabel{cond3} $\sup_{n\in\bb{Z}_+}\sum_{l\in\bb{Z}_+}\frac{n+1}{l+1}\dBg_\phi(n,l)<\infty$
\item \hlabel{cond4} $\sup_{n\in\bb{Z}_+}\frac{1}{n+1}\sum_{l\in\bb{Z}_+}(l-n)\dBg_\phi(n,l)<\infty$.
\end{enumerate}
First, we note that \[\dBg_\phi(n,l)=\begin{cases}
\sigma^2(n+1)+m+\dBg_\Pi(n,n+1) & \mbox{ if $l=n+1$}\\
\sigma^2 n +\dBg_\Pi(n,n-1) & \mbox{ if $l=n-1$} \\
-2\sigma^2n-\sigma^2-m+\dBg_\Pi(n,n) & \mbox{ if $l=n$} \\
\dBg_\Pi(n,l) & \mbox{ otherwise}.
\end{cases}
\]
It is plainly  sufficient to check all four conditions above  for $\dBg_\Pi$ merely. From the definition of $\dBg_\Pi(n,n)$, we get that, for all $n\in\bb{Z}_+$,
\begin{align}
\dBg_\Pi(n,n)=&-\int_0^\infty\frac{1}{n+1}\left(1-\sum_{l=0}^{n-1}\dbinom{n+1}{l} e^{-ly}(1- e^{-y})^{n-l+1}\right)\Pi(dy) \nonumber \\
&-\int_0^\infty\frac{1}{n+1} (1- e^{-(n+1)y}+(n+1)y)\Pi(dy). \nonumber
\end{align}
\tred{Since for any $y>0$, $\sum_{l=0}^{n+1}\dbinom{n+1}{l} e^{-ly}(1-e^{-y})^{n+1-l}=1$, the above expression reduces to}
\begin{align}
\dBg_\Pi(n,n)=\int_0^\infty( e^{-ny}- e^{-(n+1)y}-y)\Pi(dy). \hlabel{eq:int_bound}
\end{align}
Next, noting that
\begin{align*}
| e^{-ny}- e^{-(n+1)y}-y|= & \left|\int_n^{n+1} y(1- e^{-ry})dr\right|\\ \le & (2n+1)y^2\bbm{1}_{\{y\le 1\}}+y\bbm{1}_{\{y>1\}},
\end{align*}
the integral in \eqref{eq:int_bound} is finite due to \eqref{eq:levy} and therefore,
\begin{align*}
\limsup_{n\to\infty}\frac{|\dBg_\phi(n,n)|}{n+1}\le 2\sigma^2+2\int_0^1 y^2\Pi(dy)<\infty.
\end{align*}
This verifies condition (\ref{cond1}). Then, for any $l\in\ZZ_+$ and sufficiently large $n$,
$$\dBg_\Pi(n,l)=\frac{1}{n+1}\int_0^\infty\dbinom{n+1}{l} e^{-ly}(1- e^{-y})^{n-l+1}\Pi(dy).$$
When $l=0$,
\begin{align}
\dBg_\Pi(n,0)=\frac{1}{n+1}\int_0^\infty (1- e^{-y})^{n+1}\Pi(dy)
\end{align}
and clearly $n\mapsto \int_0^\infty(1- e^{-y})^{n+1}\Pi(dy)$ is a decreasing sequence. Thus, $\lim_{n\to\infty}\dBg_\Pi(n,0)=0$. When $l\ge 1$, let us define, for all \tred{$n\in\bb{Z}_+$ with $n\ge l+1$,}  $$a_n=\int_0^1 e^{-ly}(1- e^{-y})^{n-l+1}\Pi(dy),$$ $$ b_n=\int_1^\infty e^{-ly}(1- e^{-y})^{n-l+1}\Pi(dy).$$ \tred{We note that both $a_n,b_n$ are well defined if $n\ge l+1$.
Since, for all $n\ge l+1$, $a_{n+1}\le (1-e^{-1})a_n$, we have that $a_n\le a_{l+1}(1-e^{-1})^{n-l-1}$, and thus $$\lim_{n\to\infty}\dbinom{n+1}{l}a_n= 0.$$ On the other hand, observing that, for any $y>0$,
\begin{align*}
\lim_{n\to\infty}\dbinom{n+1}{l} e^{-ly}(1- e^{-y})^{n-l+1}&=0 \\
\sup_{n\ge 1}\dbinom{n+1}{l} e^{-ly}(1- e^{-y})^{n+1-l}&\le 1,
\end{align*}}
a dominated convergence argument entails that
$$\lim_{n\to\infty}\dbinom{n+1}{l}b_n= 0$$ which verifies condition (\ref{cond2}). \tred{For condition (\ref{cond3}), we first observe that for any $y>0$ and $l,n\in\ZZ_+$ with $l\le n+1$, the following identity
\begin{align*}
	\frac{1}{n+2}\sum_{j=1}^{n+1}(1-e^{-y})^j-\frac{y}{n+2}&=
	\sum_{l=0}^{n-1}\frac{1}{l+1}\dbinom{n+1}{l}e^{-ly}\left(1-e^{-y}\right)^{n-l+1} \\
	&+\frac{\left(e^{-(n+1)y}-1+(n+1)y\right)}{n+2} +\left(e^{-ny}-e^{-(n+1)y}-y\right)
\end{align*}
holds.
As a result of the above identity and invoking \eqref{eq:jump} one gets
\begin{align}
\sum_{l=0}^{n+1}\frac{n+1}{l+1}\dBg_\Pi(n,l)=\frac{1}{n+2}\int_0^\infty(1-e^{-y}-y)+\frac{1}{n+2}\sum_{j=2}^{n+1}(1-e^{-y})^j\Pi(dy).
\end{align}
}
Since for $y>0$, $|1-e^{-y}-y|\le y\wedge\frac{y^2}{2}$, using \eqref{eq:levy}, we get
\begin{align*}
\int_0^\infty |1-e^{-y}-y|\Pi(dy)\le \int_0^1 \frac{y^2}{2}\Pi(dy)+\int_1^\infty y\Pi(dy)<\infty
\end{align*}
and, for all $2\le j\le n+1$,
\begin{align*}
\int_0^\infty(1-e^{-y})^j\Pi(dy)\le \int_0^1 y^2\Pi(dy)+\int_1^\infty\Pi(dy)<\infty.
\end{align*}
Therefore, condition (\ref{cond3}) is satisfied as well. Finally, the last condition follows since, plainly,
\begin{align*}
\lim_{n\to\infty}\frac{1}{n+1}\sum_{l=0}^{n+1}(l-n)\dBg_\Pi(n,l)=\lim_{n\to\infty}\frac{1}{n+1}\int_0^\infty (e^{-y}-1+y)\Pi(dy)=0.
\end{align*}
Therefore, $\dBg_\phi$ generates a Feller semigroup on $\mathbf{C}_0(\ZZ_+)$ with $\mathbf{C}_c(\ZZ_+)$ as its core.

Next, to prove the discrete self-similarity property of the generated semigroup above, we need the following.
\begin{proposition}\hlabel{thm:gateway}
	\begin{enumerate}
		\item \hlabel{it:1} Let $Q^\phi$ and $\eQ^\phi$ denote the Feller semigroups generated by $G_\phi$ and $\dBg_\phi$ respectively. Then, for any $\f\in \mathbf{C}_0(\ZZ_+)$ and for all $t\ge 0$,
		\begin{align}\hlabel{eq:P_intertwining}
			Q^\phi_t \Lambda \f=\Lambda \eQ^\phi_t \f
		\end{align}
		where we recall that $\Lambda\mathsf f(x)=\bb{E}[\mathsf f(\mathrm{Pois}(x))]$ with $\mathrm{Pois}(x)$ a Poisson random variable with parameter $x>0$.
		\item \hlabel{it:2} The counting measure on $\ZZ_+$, denoted by $\bbm{m}$, is an excessive measure for the semigroup $\eQ^\phi$. Hence $\eQ^\phi$ can be extended uniquely to a strongly continuous contraction semigroup on $\ell^2(\ZZ_+)$, which we again denote by $\eQ^\phi$.
		\item \hlabel{it:3} The operator $\Lambda$ can be extended uniquely to an operator (also denoted by $\Lambda$) in $\mathscr{B}(\ell^2(\ZZ_+),\bmrm{L}(\bb{R}_+))$. Keeping the same notation for the extension of $Q^\phi$ on $\bmrm{L}(\bb{R}_+)$, we have, for all $\f\in\ell^2(\bb{Z}_+)$ and $t\ge 0$,
		\begin{align}\hlabel{eq:int_self_sim}
			Q^\phi_t \Lambda \f=\Lambda\eQ^\phi_t \f.
		\end{align}
		 Moreover, $\Lambda$ is a quasi-affinity, that is, it is bounded, injective and has dense range.
	\end{enumerate}
\end{proposition}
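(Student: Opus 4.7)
The plan is to treat the three items in sequence, using a generator-level intertwining plus a resolvent argument for \eqref{it:1}, a column-sum calculation and interpolation for \eqref{it:2}, and Jensen's inequality combined with an invariance computation for \eqref{it:3}.

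For \eqref{it:1}, I would first establish the infinitesimal intertwining
\[ G_\phi \Lambda \f = \Lambda \dBg_\phi \f, \qquad \f\in\esC_c(\ZZ_+),\]
by combining three elementary commutation rules with the Poisson kernel: $\frac{d}{dx}\Lambda\f=\Lambda\partial_+\f$ and $x\Lambda\g(x)=\Lambda(n\g(n-1))(x)$ (both from index shifts in $\Lambda\f(x)=e^{-x}\sum_n \f(n)x^n/n!$), together with $d_{e^{-y}}\Lambda=\Lambda\dD_{e^{-y}}$ from Proposition~\ref{prop:D_alpha}\eqref{it:d1}. The diffusion part of $G_\phi$ is then matched with the first two terms of $\dBg_\phi$ by direct substitution; for the jump part one further uses $\frac{1}{x}\Lambda \g(x)=\Lambda(\g(n+1)/(n+1))(x)$, valid whenever $\g(0)=0$, a condition met by the L\'evy integrand because $\dD_{e^{-y}}\f(0)=\f(0)$. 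Since $\esC_c(\ZZ_+)$ is a core for $\dBg_\phi$ by Theorem~\ref{feller_skip}, the infinitesimal intertwining yields the analogous identity for the resolvents on a dense subset of $\esC_0(\ZZ_+)$, and Laplace inversion upgrades it to the semigroup identity \eqref{eq:P_intertwining}.

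For \eqref{it:2}, it suffices to show $\sum_n \dBg_\phi(n,l)\leq 0$ for every $l\in\ZZ_+$. Splitting into diffusion and jump contributions, the diffusion column sum is easily seen to equal $0$ for $l\geq 1$ and $-m$ for $l=0$. For the jump part, interchanging summation with the $\Pi$-integral and evaluating $\sum_{n\geq l+1}\binom{n+1}{l}(1-e^{-y})^{n-l+1}/(n+1)$ via the generating function $\sum_{k\geq 0}\binom{k+l-1}{k}u^k=(1-u)^{-l}$ with $u=1-e^{-y}$, and combining with the diagonal entry $\dBg_\Pi(l,l)$ and the sub-diagonal entry $\dBg_\Pi(l-1,l)$, yields a telescoping cancellation giving $\sum_n \dBg_\Pi(n,l)=0$ for every $l$. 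Hence the counting measure $\bbm{m}$ is excessive with $\bbm{m}\dBg_\phi=-m\,\delta_0\leq 0$. Contractivity of $\eQ^\phi$ on $\ell^2(\ZZ_+)$ then follows by Riesz--Thorin interpolation between the $\ell^1(\bbm{m})$-contractivity (dual to excessivity) and the $\ell^\infty$-contractivity (sub-Markovianity), while strong continuity at the origin on the dense subspace $\esC_c(\ZZ_+)\subset \ell^2(\ZZ_+)$ is inherited from the Feller strong continuity together with the just-proved contraction property.

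For \eqref{it:3}, boundedness of $\Lambda\colon\ell^2(\ZZ_+)\to\mathbf{L}^2(\RR_+)$ with norm at most $1$ follows from Jensen's inequality applied to $\Lambda\f(x)=\E[\f(\mathrm{Pois}(x))]$: integrating $|\Lambda\f(x)|^2\leq \sum_n|\f(n)|^2 \frac{x^ne^{-x}}{n!}$ over $\RR_+$ and using $\int_0^\infty \frac{x^ne^{-x}}{n!}\,dx=1$ gives $\|\Lambda\f\|_{\mathbf{L}^2}\leq \|\f\|_{\ell^2}$. Injectivity is immediate from the uniqueness of Taylor expansions of $e^x\Lambda\f(x)$. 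Density of the range amounts to injectivity of the adjoint: if $g\in\mathbf{L}^2(\RR_+)$ satisfies $\int_0^\infty g(x)x^n e^{-x}\,dx=0$ for every $n$, then the Laplace transform $\lambda\mapsto \int_0^\infty g(x)e^{-\lambda x}\,dx$, analytic on $\{\re\lambda>0\}$ because $g\in\mathbf{L}^2(\RR_+)$, has all its derivatives vanishing at $\lambda=1$ and so vanishes identically, forcing $g=0$. Finally, to obtain an $\mathbf{L}^2(\RR_+)$-contractive extension of $Q^\phi$ I would check that Lebesgue measure is invariant for $Q^\phi$: a direct calculation of $\int_0^\infty G_\phi f(x)\,dx$ for $f\in\esC_c^\infty((0,\infty))$ shows that the diffusion terms integrate to $0$ after integration by parts, while the jump term vanishes thanks to the scaling invariance $\int_0^\infty f(e^{-y}x)/x\,dx=\int_0^\infty f(u)/u\,du$. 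The intertwining \eqref{eq:int_self_sim} on $\ell^2(\ZZ_+)$ then follows from \eqref{it:1} by approximating any $\f\in\ell^2(\ZZ_+)$ by elements of $\esC_c(\ZZ_+)$ and using the continuity of $\Lambda,\,Q^\phi_t,\,\eQ^\phi_t$ in the appropriate norms. The main technical obstacle is the semigroup-level intertwining in \eqref{it:1}, since applying $\dBg_\phi$ to $\eQ^\phi_t\f$ would require $\eQ^\phi_t\f$ to lie in the explicit core $\esC_c(\ZZ_+)$, which need not hold; the resolvent route circumvents this because density of $(\lambda-\dBg_\phi)\esC_c(\ZZ_+)$ in $\esC_0(\ZZ_+)$ is precisely the core property supplied by Theorem~\ref{feller_skip}.
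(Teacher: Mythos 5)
Your strategy is sound and reaches the same conclusions, but by a noticeably different route from the paper's. For the generator-level identity $G_\phi\Lambda\f=\Lambda\dBg_\phi\f$ on $\esC_c(\ZZ_+)$, you commute the pieces of $G_\phi$ directly past the Poisson kernel via index-shift identities (your shifts are correct, including the $\g(0)=0$ condition needed to divide by $x$, which holds because $\dD_{e^{-y}}\f(0)=\f(0)$); the paper instead works with the inverse map $\nab f(n)=\frac{d^n}{dx^n}(e^xf(x))(0)$ on the space $\esP$ of functions $e^{-x}P(x)$ and proves the reverse identity $\dBg_\phi\nab f=\nab G_\phi f$ by Leibniz-rule computations, first for $\Pi=\delta_y$ and then integrating in $y$. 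You then upgrade to semigroups via resolvents and Laplace inversion, whereas the paper differentiates $s\mapsto Q^\phi_s\Lambda\eQ^\phi_{t-s}\f$ using the Kolmogorov equations; both work given the core property. For item (2) you compute the column sums of $\dBg_\phi$ directly; the paper instead pushes the excessivity of Lebesgue measure for $Q^\phi$ through the intertwining via the identity $\mu\Lambda=\bbm{m}$, which is shorter but imports a property of $Q^\phi$ it does not reprove. Your telescoping cancellation does close (the generating-function evaluation gives column sum $0$ for $\dBg_\Pi$ and $-m\,\mathbbm{1}_{\{l=0\}}$ for the drift part), but you should add a word on why the generator-level inequality $\sum_n\dBg_\phi(n,l)\le 0$ yields excessivity at the semigroup level (non-explosiveness, or approximation by truncated chains). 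Your dense-range argument in item (3), via analyticity of the Laplace transform of an annihilating $g\in\mathbf{L}^2(\RR_+)$, is cleaner and more explicit than the paper's appeal to injectivity of $\wi{\Lambda}$.

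There is one genuine gap. In item (1) you treat $G_\phi\Lambda\f$ as an explicit integro-differential expression and never verify that $\Lambda\f$ actually belongs to the domain $\esD(G_\phi)$ of the Feller generator of $Q^\phi$, with the generator acting on it by that formula. Without this, the infinitesimal intertwining is only an identity between formal expressions and cannot be fed into the resolvent identity $(\lambda-G_\phi)\Lambda\f=\Lambda(\lambda-\dBg_\phi)\f$, which is the engine of your argument. The paper devotes Lemma~\ref{lem:domain_selfsim} to precisely this point: it shows $\esP\subset\esD(G_\phi)$ by checking that $G_\phi f\in\esC_0([0,\infty))$ for $f\in\esP$ (the delicate part being the decay of the jump term as $x\to\infty$, handled with a second-order Taylor bound and dominated convergence) and then invoking the identification of the Feller generator with the Dynkin characteristic operator, Lamperti's result guaranteeing that $\esP$ lies in the domain of the latter. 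You need this lemma, or an equivalent substitute, for your resolvent step to be legitimate; with it supplied, your proof is complete.
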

We split its proof into several parts.
\subsubsection{Proof of Proposition~\ref{thm:gateway}\eqref{it:1}}
First, let us write
\begin{equation}
\begin{aligned}
G_\phi=&G_{m,\sigma^2}+G_\Pi=\sigma^2x\frac{d^2}{dx^2}+(m+\sigma^2)\frac{d}{dx}+G_\Pi \\
\dBg_\phi=&\dBg_{m,\sigma^2}+\dBg_\Pi=(\sigma^2n+m+\sigma^2)\pp+n\pn+\dBg_\Pi
\end{aligned}
\end{equation}
where, for all $f\in \esC^2_b(\bb{R}_+),$ $$G_\Pi f(x)=\frac{1}{x}\int_0^\infty (f(x e^{-y})-f(x)+yxf'(x))\Pi(dy).$$
Let $\esP$ be the vector space of functions defined on $\bb{R}_+$ which are of the form $e^{-x} P(x)$, $P$ being a polynomial. We define the linear operator $\nab:\esP\to \mathbf{C}_c(\ZZ_+)$ as follows
\begin{align}\hlabel{eq:lambda_inv}
\nab f(n)=\frac{d^n}{dx^n}(e^x f(x))(0).
\end{align}
\begin{lemma}\hlabel{lem:rev_intertwining}
For any $f\in \esP$,
$$\dBg_\phi\nab f=\nab G_\phi f.$$
\end{lemma}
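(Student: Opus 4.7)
The plan is to split both generators into their diffusion/drift and jump parts, $G_\phi = G_{m,\sigma^2} + G_\Pi$ and $\dBg_\phi = \dBg_{m,\sigma^2} + \dBg_\Pi$, and to verify $\dBg_\bullet \nab f = \nab G_\bullet f$ separately on each piece. Writing $f = e^{-x}P \in \esP$, so that $\nab f(n) = P^{(n)}(0)$ and this sequence is compactly supported in $n$, I will rely on three auxiliary identities, each verified by a Leibniz expansion of $e^x\cdot(\cdot)$ at $0$: first, $\nab[d_{e^{-y}}f](n) = \dD_{e^{-y}}\nab f(n)$, the algebraic analogue of Proposition~\ref{prop:D_alpha}\eqref{it:d1}, obtained from $\frac{d^n}{dx^n}|_0[e^{x(1-e^{-y})}P(e^{-y}x)]$ via binomial expansion; second, $\nab[xf'(x)](n) = -n\partial_-\nab f(n)$, the generator-level counterpart obtained by differentiating at $\alpha = 1$; third, the ``division by $x$'' identity $\nab[g(x)/x](n) = \nab g(n+1)/(n+1)$, valid for any $g$ with $g(0) = 0$, obtained by writing $g = xh$ and using $(xe^xh)^{(n+1)}(0) = (n+1)(e^xh)^{(n)}(0)$.

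For the diffusion/drift piece, since $e^x G_{m,\sigma^2}f = \sigma^2 x(P'' - 2P' + P) + (m+\sigma^2)(P'-P)$, a direct application of $(xh)^{(n)}(0) = nh^{(n-1)}(0)$ yields
\[
\nab G_{m,\sigma^2}f(n) = (\sigma^2 n + m+\sigma^2)P^{(n+1)}(0) - (2\sigma^2 n + m+\sigma^2)P^{(n)}(0) + \sigma^2 n P^{(n-1)}(0),
\]
which coincides with $\sigma^2 n(\partial_+ + \partial_-)\nab f(n) + (m+\sigma^2)\partial_+\nab f(n) = \dBg_{m,\sigma^2}\nab f(n)$ upon expansion.

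For the jump piece, observe that the inner integrand $g_y(x) := d_{e^{-y}}f(x) - f(x) + yxf'(x)$ vanishes at $x = 0$, so the division-by-$x$ identity applies to give $\nab[g_y/x](n) = \nab g_y(n+1)/(n+1)$. Combining with the first two auxiliary identities produces
\[
\nab g_y(n+1) = \dD_{e^{-y}}\nab f(n+1) - \nab f(n+1) - y(n+1)\partial_-\nab f(n+1),
\]
and the elementary relation $-\partial_-\nab f(n+1) = \partial_+\nab f(n)$ recasts this as precisely the integrand defining $\dBg_\Pi \nab f(n)$ in \eqref{eq:jump}. The main technical step I foresee is exchanging $\int_0^\infty \Pi(dy)$ with $\nab$; this should follow from the L\'evy condition $\int_0^\infty (y\wedge y^2)\Pi(dy) < \infty$ together with a bound of order $y\wedge 1$ on $\dD_{e^{-y}}\nab f(n+1) - \nab f(n+1)$ (using the compact support of $\nab f$) and of order $y$ on $y(n+1)\partial_-\nab f(n+1)$, producing an $L^1(\Pi)$ dominating function.
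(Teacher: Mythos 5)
Your argument is correct and reaches the same identity, but by a genuinely different route on the jump part. The paper reduces to Dirac measures $\Pi=\delta_y$ and then verifies $\dBg_y\nab \mathrm{h}_l=\nab G_y\mathrm{h}_l$ on the monomial basis $\mathrm{h}_l(x)=e^{-x}x^l$ by explicit Leibniz computations, matching the result entry by entry against the kernel $\dBg_y(n,l)$; the drift part is delegated to \cite[Lemma~3]{miclo_patie}. You instead prove three operator-level identities for $\nab$ — $\nab d_{e^{-y}}=\dD_{e^{-y}}\nab$, $\nab[xf']=-n\partial_-\nab f$, and the division-by-$x$ shift $\nab[g/x](n)=\nab g(n+1)/(n+1)$ for $g(0)=0$ — and compose them. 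All three identities check out, and together with $-\partial_-\nab f(n+1)=\partial_+\nab f(n)$ they reproduce exactly the integrand in the definition of $\dBg_\Pi$ (note this is the unnumbered display following \eqref{eq:skip_free_gen}, not \eqref{eq:jump}, which is the matrix form). Your route is more structural and makes transparent why the discrete generator has the shape it does; the paper's basis computation is more pedestrian but yields the kernel entries $\dBg_y(n,l)$ explicitly, which it reuses elsewhere. Your direct verification of the drift part is also correct and matches \eqref{eq:skip_free_gen}.

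The one step that fails as literally stated is your dominating function for exchanging $\nab$ with $\int_0^\infty\cdot\,\Pi(dy)$. Bounding $\dD_{e^{-y}}\nab f(n+1)-\nab f(n+1)$ by $C_n(y\wedge 1)$ and the drift correction by $C_n\,y$ separately gives a bound of order $y$ near $y=0$, and $\int_0^1 y\,\Pi(dy)$ need \emph{not} be finite under \eqref{eq:levy} (only $\int_0^1 y^2\,\Pi(dy)<\infty$ is guaranteed). You must exploit the cancellation between the two pieces: since $\frac{d}{dy}\dD_{e^{-y}}\f(n+1)\big|_{y=0}=(n+1)\partial_-\f(n+1)=-(n+1)\partial_+\f(n)$ (Proposition~\ref{prop:D_alpha}\eqref{it:d4}), the full integrand $\dD_{e^{-y}}\nab f(n+1)-\nab f(n+1)+y(n+1)\partial_+\nab f(n)$ is $\mathrm{O}(y^2)$ as $y\to0$ and $\mathrm{O}(y)$ as $y\to\infty$, hence dominated by $C_n(y\wedge y^2)$, which is $\Pi$-integrable by \eqref{eq:levy}. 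With that correction the interchange is justified; the paper itself is terse on this point, disposing of it with a one-line appeal to $\esP\subset\esC^\infty_b(\RR_+)$.
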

\begin{proof}
From \cite[Lemma~3]{miclo_patie}, it is known that, for all $f\in\esP$, $$\dBg_{m,\sigma^2}\nab f=\nab G_{m,\sigma^2} f.$$ Thus, it suffices to prove this lemma replacing $G_\phi, \dBg_\phi$ by $G_\Pi, \dBg_\Pi$ respectively. For $y>0$, let $\delta_y$ denote the Dirac measure at $y$. Taking $\Pi=\delta_y$ and writing $G_{\delta_y}$ and $\dBg_{\delta_y}$ simply as $G_y$, $\dBg_y$ respectively, we get
\begin{align}
G_y f(x)=\frac{1}{x}(f(x e^{-y})-f(x)+yx f'(x))
\end{align}
and
\begin{align}
\dBg_y (n,l)=\begin{cases}
\frac{1}{n+1}\dbinom{n+1}{l}(1- e^{-ly})(1- e^{-y})^{n-l+1} & \mbox{ if $l\in\lin 0,n-1\rin$} \\
\frac{1}{n+1}( e^{-(n+1)y}-1+(n+1)y) & \mbox{ if $l=n+1$} \\
0 & \mbox{ if $l>n+1$}
\end{cases}
\end{align}
with $\dBg_y(n,n)$ being such that $\sum_{l=\tred{0}}^{n+1}\dBg_y(n,l)=0$. Then, observing that $\dBg_\Pi(n,l)=\int_0^\infty\dBg_y(n,l)\Pi(dy)$ as well as $G_\Pi f(x)=\int_0^\infty G_yf(x) dy$ for all $f\in \esC^2_b(\bb{R}_+)$. We claim that it suffices to show that, for all $y>0$ and $f\in\esP$,
\begin{align}\hlabel{eq:dirac}
\dBg_y\nab f =\nab G_y f.
\end{align}
Indeed, when $f\in\esP$,
\begin{align*}
\dBg_\Pi\nab f(n)=\sum_{l=0}^{n+1}\dBg_\Pi(n,l)\nab f(l)=&\sum_{l=0}^{n+1}\nab f(l)\int_0^\infty\dBg_y(n,l)\Pi(dy)\\
=&\int_0^\infty\dBg_y\nab f(n) \Pi(dy).
\end{align*}
On the other hand,
\begin{align*}
\nab G_\Pi f(n)=\frac{d^n}{dx^n}(e^x G_\Pi f(x))(0)=\frac{d^n}{dx^n}\left(e^x\int_0^\infty G_y f(x)\Pi(dy)\right)(0).
\end{align*}
Since $\esP\subset \esC^\infty_b(\bb{R}_+)$, the above integration and differentiation can be interchanged, therefore yielding
$$\nab G_\Pi f(n)=\int_0^\infty\nab G_y f(n)\Pi(dy).$$
We now proceed to show \eqref{eq:dirac}. Since $\esP=\Span\{x\mapsto e^{-x}x^l; l\in\ZZ_+\}$, it suffices to prove \eqref{eq:dirac} only for $f(x)=\mathrm{h}_l(x):=e^{-x}x^l$. Now, for $l\ge 1$,
\begin{align}\hlabel{eq:diff}
\nab G_y \mathrm{h}_l(n)=\frac{d^n}{dx^n}\left[e^{x(1-xe^{-y})}x^{l-1}e^{-yl}-x^{l-1}+y(lx^{l-1}-x^l)\right](0).
\end{align}
When $l\in\lin 1,n-1\rin$, applying Leibniz rule we get
\begin{equation}\hlabel{eq:1_n-1}
\begin{aligned}
\nab G_y\mathrm{h}_l(n)=&e^{-ly}\sum_{m=0}^n\dbinom{n}{m}\frac{d^m}{dx^m}(x^{l-1})(0)\frac{d^{n-m}}{dx^{n-m}}\left(e^{x(1-e^{-y})}\right)(0) \\
=&\dbinom{n}{l-1}(l-1)!(1-e^{-y})^{n-l+1} \\
=&\frac{n!}{(n-l+1)!}e^{-ly}(1-e^{-y})^{n-l+1}=l!\dBg_y(n,l).
\end{aligned}
\end{equation}
Also, \eqref{eq:diff} entails
\begin{equation}\hlabel{eq:n_n+1}
\begin{aligned}
\nab G_y\mathrm{h}_n(n)=&n!(e^{-ny}(1-e^{-y})-y)=n!\dBg_y(n,n) \\
\nab G_y\mathrm{h}_{n+1}(n)=&n! (e^{-(n+1)y}-n!+(n+1)y)=(n+1)!\dBg_y(n,n+1).
\end{aligned}
\end{equation}
Finally,
\begin{align}\hlabel{eq:0}
\nab G_y\mathrm{h}_0(n)=\frac{d^n}{dx^n}\left(\frac{1}{x}(e^{x(1-e^{-y})}-1)\right)(0)=\frac{1}{n+1}(1-e^{-y})^{n+1}=\dBg_y(n,0).
\end{align}
On the other hand, for all $l\in\ZZ_+$, $\dBg_y\nab\mathrm{h}_l(n)=l!\dBg_y(n,l)$. Therefore, combining \eqref{eq:1_n-1}, \eqref{eq:n_n+1} and \eqref{eq:0}, we conclude that, for all $n,l\ge 0$,
\begin{align*}
\dBg_y\nab\mathrm{h}_l(n)=\nab G_y\mathrm{h}_l(n).
\end{align*}
This completes the proof of the lemma.
\end{proof}
The next lemma is a variant of \cite[Lemma~4]{miclo_patie}.
\begin{lemma}\hlabel{lem:Lambda}
$\nab:\esP\to \mathbf{C}_c(\ZZ_+)$ is bijective with inverse $\Lambda$ such that $\Lambda \f(x)=\bb{E}[\f(\mathrm{Pois}(x))]$ for all $\f\in \mathbf{C}_c(\ZZ_+)$. Moreover, $\Lambda$ extends to a bounded operator from $\mathbf{C}_0(\ZZ_+)$ to $\mathbf{C}_0(\bb{R}_+)$.
\end{lemma}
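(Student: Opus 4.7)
The plan is to verify the bijection on the algebraic level by direct computation, and then establish the Banach-space extension by a contraction estimate combined with a Poisson-concentration argument.

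First, I would check that $\nabla$ maps $\esP$ into $\mathbf{C}_c(\ZZ_+)$. Any $f\in\esP$ has the form $f(x)=e^{-x}P(x)$ with $P$ a polynomial, so $e^x f(x)=P(x)$ and
\[
\nabla f(n)=\frac{d^n}{dx^n}P(x)\big|_{x=0}=n!\,a_n,
\]
where $a_n$ are the (finitely many nonzero) coefficients of $P$; thus $\nabla f\in\mathbf{C}_c(\ZZ_+)$. Conversely, for $\f\in\mathbf{C}_c(\ZZ_+)$, writing explicitly
\[
\Lambda\f(x)=\sum_{n\geq0}\f(n)\frac{x^n e^{-x}}{n!}=e^{-x}\sum_{n\geq 0}\f(n)\frac{x^n}{n!},
\]
the right-hand side is $e^{-x}$ times a polynomial, hence $\Lambda\f\in\esP$. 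The two identities $\nabla\Lambda=\mathrm{Id}$ on $\mathbf{C}_c(\ZZ_+)$ and $\Lambda\nabla=\mathrm{Id}$ on $\esP$ then follow by evaluating the $n$-th derivative of the generating series at $0$: on the one hand $\frac{d^n}{dx^n}(e^x\Lambda\f(x))|_0=\f(n)$, and on the other hand $\Lambda\nabla f(x)=e^{-x}\sum_n a_n x^n=f(x)$.

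Next, I would prove the contraction estimate. Since $(x^n e^{-x}/n!)_{n\geq0}$ is a probability mass function,
\[
|\Lambda\f(x)|\leq\sum_{n\geq0}|\f(n)|\frac{x^n e^{-x}}{n!}\leq \|\f\|_\infty,
\]
so $\Lambda$ is a bounded operator of norm at most $1$ on bounded functions. This estimate immediately allows the unique extension of $\Lambda$ from the dense subspace $\mathbf{C}_c(\ZZ_+)\subset\mathbf{C}_0(\ZZ_+)$ to all of $\mathbf{C}_0(\ZZ_+)$, provided we show the range lies in $\mathbf{C}_0(\mathbb{R}_+)$.

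The main (and only non-routine) step is therefore checking the $\mathbf{C}_0$-property of $\Lambda\f$ for $\f\in\mathbf{C}_0(\ZZ_+)$. Continuity of $\Lambda\f$ on $\mathbb{R}_+$ follows from uniform convergence on compacts of the series $\sum_n \f(n) x^n e^{-x}/n!$, which is controlled termwise by $\|\f\|_\infty$ times the Poisson weights and handled by dominated convergence. For the decay at infinity, given $\varepsilon>0$ pick $N$ with $|\f(n)|<\varepsilon$ for $n\geq N$; then with $X_x\sim\mathrm{Pois}(x)$,
\[
|\Lambda\f(x)|\leq \|\f\|_\infty\,\mathbb{P}(X_x\leq N)+\varepsilon.
\]
Since $X_x/x\to 1$ in probability as $x\to\infty$ (for instance by Chebyshev using $\mathrm{Var}(X_x)=x$), $\mathbb{P}(X_x\leq N)\to 0$, so $\limsup_{x\to\infty}|\Lambda\f(x)|\leq\varepsilon$, and $\varepsilon$ is arbitrary. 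Combining this with the contraction bound yields $\Lambda\in\mathscr{B}(\mathbf{C}_0(\ZZ_+),\mathbf{C}_0(\mathbb{R}_+))$, completing the proof.
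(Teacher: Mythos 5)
Your proof is correct and complete. Note that the paper itself does not prove this lemma: it simply remarks that it is ``a variant of [Lemma~4]{miclo\_patie}'' and leaves the details to that reference, so you have in effect supplied the argument that the authors outsource. Your route is the natural one and matches what one expects the cited lemma to contain: the algebraic bijection between $\esP$ and $\mathbf{C}_c(\ZZ_+)$ via the generating-function identities $\nabla\Lambda=\mathrm{Id}$ and $\Lambda\nabla=\mathrm{Id}$ (both reduce to finite sums, so no interchange-of-limits issues arise), the Markov-kernel contraction bound $\|\Lambda\f\|_\infty\le\|\f\|_\infty$, and the vanishing at infinity via the splitting $|\Lambda\f(x)|\le\|\f\|_\infty\,\mathbb{P}(X_x\le N)+\varepsilon$ together with Chebyshev's inequality for the Poisson law. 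The only point worth tightening is the continuity step: ``dominated convergence'' is a slightly loose description of what is really a Weierstrass $M$-test argument (on $[0,R]$ the terms are dominated by $\|\f\|_\infty R^n/n!$, giving uniform convergence of a series of continuous functions), but this is a matter of phrasing, not a gap.
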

We also need the following useful result.
\begin{lemma}\hlabel{lem:domain_selfsim}
	For all $\phi\in \Be$, $\esP\subset\esD(G_\phi)$ where $\esD(G_\phi)$ is the domain of the generator of the Feller semigroup $Q^\phi$.
\end{lemma}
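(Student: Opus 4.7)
The plan is to show that for each $l\in\ZZ_+$, the basis function $\mathrm{h}_l(x):=e^{-x}x^l$ lies in $\esD(G_\phi)$; then $\esP\subset\esD(G_\phi)$ follows by linearity. My route is through the closedness of the Feller generator $G_\phi$ combined with a cutoff approximation by compactly supported smooth functions, using the standard fact that $\esC^\infty_c(\RR_+)\subset\esD(G_\phi)$ is a core for Lamperti-type spectrally negative self-similar semigroups (see e.g.\ Patie-Savov \cite{Patie-Savov-GeL}).

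First, I would verify that the formal expression $G_\phi\mathrm{h}_l$ defined by \eqref{eq:ssmp_spec} is well-defined pointwise and belongs to $\CC_0(\RR_+)$. The diffusion and drift contributions $\sigma^2 x\mathrm{h}_l''+(m+\sigma^2)\mathrm{h}_l'$ are polynomials in $x$ multiplied by $e^{-x}$, so they clearly vanish at infinity and extend continuously at $0$. For the jump integral, Taylor expanding $\mathrm{h}_l(e^{-y}x)$ in $y$ gives $|\mathrm{h}_l(e^{-y}x)-\mathrm{h}_l(x)+yx\mathrm{h}_l'(x)|\leq C\,y^2x^2\sup|\mathrm{h}_l''|$ for small $y$ and is bounded by $2\|\mathrm{h}_l\|_\infty+yx\|\mathrm{h}_l'\|_\infty$ for large $y$. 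Dividing by $x$ and integrating against $\Pi$, the L\'evy moment condition \eqref{eq:levy} yields absolute convergence; decay at infinity is inherited from the $e^{-x}$ factor, and continuity at $0$ follows by dominated convergence, the boundary value being nontrivial only when $l=0$ where it equals $\int_0^\infty(1-e^{-y}-y)\Pi(dy)$, which is finite thanks to \eqref{eq:levy}.

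Next, pick a smooth cutoff $\chi_n\in\esC^\infty_c((0,\infty))$ with $\chi_n\equiv 1$ on $[1/n,n]$, supported in $[1/(2n),2n]$, with $\|\chi_n'\|_\infty$ and $\|\chi_n''\|_\infty$ controlled (growing only polynomially in $n$ on their supports). Define $f_n:=\mathrm{h}_l\chi_n\in\esC^\infty_c(\RR_+)\subset\esD(G_\phi)$. The convergence $f_n\to \mathrm{h}_l$ in $\CC_0(\RR_+)$ is immediate from the exponential decay at infinity and the factor $x^l$ (or, for $l=0$, the compensating decay of $e^{-x}$) near $0$. To get $G_\phi f_n\to G_\phi\mathrm{h}_l$ in $\CC_0(\RR_+)$, I would argue termwise: for the local diffusion and drift terms by uniform convergence of $f_n^{(j)}$ to $\mathrm{h}_l^{(j)}$ away from the cutoff region, using that $\sigma^2 x$ and $(m+\sigma^2)$ times polynomial $\times\,e^{-x}$ absorb any issues; and for the jump integral by dominated convergence with dominator proportional to $y^2\wedge y$, integrable against $\Pi$ by \eqref{eq:levy}. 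Closedness of $G_\phi$ then yields $\mathrm{h}_l\in\esD(G_\phi)$ with $G_\phi\mathrm{h}_l$ given by the formula.

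The main obstacle I anticipate is controlling the jump integral for the cutoff approximations uniformly in $x$: the value $f_n(e^{-y}x)$ depends in a nontrivial way on whether $e^{-y}x$ falls inside the cutoff support, and the classical second-order Taylor cancellation used to dominate $[f_n(e^{-y}x)-f_n(x)+yxf_n'(x)]/x$ near $y=0$ may be spoiled if $x$ lies close to the edge of the support of $\chi_n$. The fix is to keep $\mathrm{h}_l(e^{-y}x)-\mathrm{h}_l(x)+yx\mathrm{h}_l'(x)$ as the target and estimate the correction term $\mathrm{h}_l(e^{-y}x)(1-\chi_n(e^{-y}x))-\mathrm{h}_l(x)(1-\chi_n(x))+yx(\mathrm{h}_l\chi_n-\mathrm{h}_l)'(x)$ by bounds of the form $\mathrm{h}_l(1-\chi_n)$ which vanish pointwise as $n\to\infty$, combined with the global dominator $C(y^2\wedge y)$ coming from \eqref{eq:levy}. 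Once this uniform control is in place, the rest of the argument is routine.
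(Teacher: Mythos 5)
There is a genuine gap, on two counts. First, your argument rests entirely on the premise that $\esC^\infty_c(\RR_+)\subset\esD(G_\phi)$ with $G_\phi$ acting there by the formula \eqref{eq:ssmp_spec}; you even assert it is a core, which you do not need and which cannot hold if ``compactly supported'' means supported away from $0$, since such functions are not even dense in $\esC_0([0,\infty))$ ($0$ is part of the state space as an entrance point). This premise is not a routine citation: identifying functions in the domain of the \emph{Feller} generator of a Lamperti-type semigroup is precisely the delicate content of the lemma. What is available from Lamperti is membership in the domain of the \emph{Dynkin characteristic operator}, and passing from that to the Feller generator requires Dynkin's theorem (\cite[Theorem~5.5, Chapter~V.3]{dynkin:1965}), applicable once one checks that the image lies in $\esC_0([0,\infty))$. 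The paper takes exactly this route, and since \cite[Proposition~6.1]{Lamperti-72} already places all of $\esP$ in the domain of the characteristic operator, no cutoff approximation is needed: the only work is the $\esC_0$ verification at infinity for the jump part, which is essentially your first paragraph. If you keep your route, you must first prove your premise, and the natural proof of it is the same Dynkin argument — so the approximation scheme buys you nothing.

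Second, even granting the premise, your cutoff fails near the origin. With $\chi_n$ supported in $[1/(2n),2n]$ you have $f_n(0)=0$ while $\mathrm{h}_0(0)=1$, so $\|f_n-\mathrm{h}_0\|_\infty\geq1$ for every $n$; the parenthetical about ``the compensating decay of $e^{-x}$ near $0$'' is false, as $e^{-x}\to1$ there. For $l=1$ the function convergence holds but the graph-norm convergence does not: on the inner transition region $x\asymp1/n$ one has $\|\chi_n''\|_\infty\asymp n^2$ and $\mathrm{h}_1(x)\asymp1/n$, so the contribution $\sigma^2x\chi_n''(x)\mathrm{h}_1(x)$ to $G_\phi f_n(x)$ is of order $1$, and similarly $(m+\sigma^2)\chi_n'(x)\mathrm{h}_1(x)$ does not vanish uniformly. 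The repair is to cut off only at infinity ($\chi_n\equiv1$ on $[0,n]$, supported in $[0,2n]$), where the exponential decay of $\mathrm{h}_l$ and its derivatives makes the edge terms harmless — but such $f_n$ do not vanish at $0$, which sends you straight back to the unproven premise of the first paragraph.
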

\begin{proof}
	Denoting the Dynkin characteristic operator of the semigroup $Q^\phi$ by $G^{D}_\phi$, it follows from \cite[Proposition~6.1]{Lamperti-72} that  $\esP\subset\esD(G^{D}_\phi)$. Also, for all $f\in\esP$, $G^{D}_\phi f=G_\phi f$. In light of \cite[Theorem~5.5, Chapter V.3]{dynkin:1965}, it suffices to show that for all $f\in\esP$, $G_\phi f\in\esC_0([0,\infty))$. Since any function $f\in\esP$ is of the form $f(x)=e^{-x}P(x)$ for some polynomial $P$, clearly $G_{m,\sigma^2} f\in\esC_0([0,\infty))$. Now, for any $f\in\esP$, we have
	\begin{align*}
		G_\Pi f(x)&=\frac{1}{x}\int_0^\infty[f(xe^{-y})-f(x)+yxf'(x)]\Pi(dy)\\			&=\frac{1}{x}\int_0^\infty[f(xe^{-y})-f(x)-(e^{-y}-1)xf'(x)]\Pi(dy) \\
			&+f'(x)\int_0^\infty (e^{-y}-1+y)\Pi(dy) \\
			&=~ A_1(x)+A_2(x).
	\end{align*}
Since $f\in\esP$, it implies that $f'\in\esC_0([0,\infty))$ and therefore $A_2(x)\to 0$ as $x\to\infty$. \tred{To deal with $A_1$, by means of Taylor expansion of $f$ up to order $2$ we obtain that}
\begin{align}\hlabel{eq:bound_0^1}
	\frac{1}{x}&\int_0^1 \left|f(xe^{-y})-f(x)-(e^{-y}-1)xf'(x)\right|\Pi(dy)\nonumber \\ &\le \tred{\frac{x}{2}}\int_0^1  \sup_{t\in [e^{-y}x,x]}\!\!\!\!|f''(t)|(1-e^{-y})^2\Pi(dy).
\end{align}
We note that any function $f\in\esP$, $f$ is either eventually increasing or decreasing, depending on the sign of the leading coefficient of the polynomial associated with the function. Without loss of generality, we assume that $f$ is eventually decreasing. Since for all $y\in [0,1]$, $e^{-y}\ge e^{-1}$, for all large values of $x$, we have $\sup_{t\in [e^{-y}x,x]}|f''(t)|\le|f''(e^{-1} x)|$, as $f\in\esP$ implies $f''\in\esP$. Thus, the right-hand side of \eqref{eq:bound_0^1} goes to $0$ as $x\to\infty$. On the other hand,
\begin{align*}
	\frac{1}{x}|f(xe^{-y})-f(x)-(e^{-y}-1)xf'(x)|\le (1-e^{-y})\|f'\|_\infty\le y\|f'\|_\infty.
\end{align*}
Using the dominated convergence theorem, we obtain that
\begin{align*}
	\lim_{x\to\infty}\frac{1}{x}\int_1^\infty[f(e^{-y}x)-f(x)+(e^{-y}-1)xf'(x)]\Pi(dy)=0.
\end{align*}
This shows that $A_2(x)\to 0$ as $x\to\infty$ which completes the proof of the lemma.
\end{proof}
Let $\esD(\dBg_\phi)$ denote the domain of the Feller generator $\dBg_\phi$. Now, coming back to the proofs of Proposition~\ref{thm:gateway}, Lemma~\ref{lem:rev_intertwining}, Lemma~\ref{lem:Lambda} and Lemma~\ref{lem:domain_selfsim} imply that, for all $\f\in \mathbf{C}_c(\ZZ_+)$,
\begin{align}\hlabel{eq:gen_intertwining}
\Lambda \f\in\esP\subset\esD(G_\phi) \text{ and } G_\phi\Lambda \f=\Lambda \dBg_\phi \f.
\end{align}
Since $\mathbf{C}_c(\ZZ_+)$ is a core for the generator $\dBg_\phi$, for any $\f\in\esD(\dBg_\phi)$ there exists a sequence $\{\f_n\}\subset \mathbf{C}_c(\ZZ_+)$ such that $\|\f_n-\f\|_\infty\to 0$ and $\|\dBg_\phi \f_n-\dBg_\phi \f\|_\infty\to 0$. Therefore, thanks to Lemma~\ref{lem:Lambda}, $$\|\Lambda \f_n-\Lambda \f\|_\infty\to 0, \  \|\Lambda \dBg_\phi \f_n-\Lambda\dBg_\phi \f\|_\infty\to 0.$$ Thus, \eqref{eq:gen_intertwining} entails that $G_\phi\Lambda \f_n$ converges in $\mathbf{C}_0(\bb{R}_+)$ which implies that $\Lambda \f\in\esD(G_\phi)$ as $(G_\phi,\esD(G_\phi))$ is a closed operator, and, for all $\f\in\esD(\dBg_\phi)$,
\begin{align}\hlabel{eq:int_full_domain}
G_\phi \Lambda \f=\lim_{n\to\infty}G_\phi\Lambda \f_n=\lim_{n\to\infty}\Lambda\dBg_\phi \f_n=\Lambda\dBg_\phi \f.
\end{align}
Using Kolmogorov's forward and backward equations, we get, for all $\f\in\esD(\dBg_\phi)$, $t>0$ and $s\in [0,t]$,
\begin{eqnarray*}
\frac{d}{ds}Q^\phi_s\Lambda\eQ^\phi_{t-s} \f & = & Q^\phi_sG_\phi\Lambda\eQ^\phi_{t-s}-Q^\phi_s\Lambda\dBg_\phi\eQ^\phi_{t-s} \f \\
& = & Q^\phi_s[G_\phi\Lambda-\Lambda\dBg_\phi]\eQ^\phi_{t-s} \f \\
& = & 0
\end{eqnarray*}
which is due to \eqref{eq:int_full_domain} together with the fact that $\eQ^\phi_{t-s}\f\in\esD(\dBg_\phi)$. Integrating the above identity, we obtain, for all $\f\in\esD(\dBg_\phi)$,
$$Q^\phi_t\Lambda \f=\Lambda\eQ^\phi_t \f.$$  Finally, using the density of $\esD(\dBg_\phi)$ and the boundedness of the operators $Q^\phi$, $\eQ^\phi$ and $\Lambda$, \eqref{eq:P_intertwining} follows.

\subsubsection{Proof of Proposition \ref{thm:gateway}\eqref{it:2}} It is plain that, for any $n\in\ZZ_+$, $$\int_0^\infty e^{-x}\frac{x^n}{n!}dx=1,$$ which implies that \tred{$\mu\Lambda=\bbm{m}$ where $\mu$ is the Lebesgue measure on $\bb{R}_+$.} Also, $\Lambda$ being a positive operator, for any $\f\in \mathbf{C}_0(\ZZ_+)$ with $\f\ge 0$, we have
$$\bbm{m} \f=\tred{\mu}\Lambda \f\ge \tred{\mu}Q^\phi_t\Lambda \f=\tred{\mu}\Lambda\eQ^\phi_t \f=\bbm{m}\eQ^\phi_t \f$$
\tred{where the second inequality in the above line holds as $\mu$ is an excessive measure for $Q^\phi$}.
This shows that $\bbm{m}$ is an excessive measure for $\eQ^\phi$.

\subsubsection{Proof of Proposition \ref{thm:gateway}\eqref{it:3}} For any $\f\in \mathbf{C}_c(\ZZ_+)$,
\begin{align*}
\|\Lambda \f\|^2_{\bmrm{L}(\bb{R}_+)}=\int_0^\infty (\Lambda \f(x))^2 dx=&\int_0^\infty\left(\sum_{n=0}^\infty e^{-x}\frac{x^n}{n!}\f(n)\right)^2dx \\
\le &\sum_{n=0}^\infty \f(n)^2\int_0^\infty e^{-x}\frac{x^n}{n!}dx \\
=&\sum_{n=0}^\infty \f(n)^2=\|\f\|^2_{\ell^2(\ZZ_+)}.
\end{align*}
Using the density of $\mathbf{C}_c(\ZZ_+)$ in $\ell^2(\ZZ_+)$, $\Lambda$ extends uniquely to a bounded operator from $\ell^2(\ZZ_+)$ to $\bmrm{L}(\bb{R}_+)$. Finally, for any $\f\in \mathbf{C}_0(\ZZ_+)\cap\ell^2(\ZZ_+)$, $\Lambda \f\in \mathbf{C}_0(\bb{R}_+)\cap\bmrm{L}(\bb{R}_+)$. Thus, for all $\f\in \mathbf{C}_0(\ZZ_+)\cap\ell^2(\ZZ_+)$, item (\ref{it:1}) ensures that
\begin{align*}
Q^\phi_t\Lambda \f=\Lambda\eQ^\phi_t \f.
\end{align*}
Again, using the density of $\mathbf{C}_c(\ZZ_+)$ in $\ell^2(\ZZ_+)$, \eqref{eq:int_self_sim} follows. Now, it remains to show that $\Lambda$ is a quasi-affinity. Boundedness of $\Lambda$ follows from item~\eqref{it:2}, and one  easily checks  that, for all $\f\in\ell^2(\ZZ_+)$,
\[\Lambda\f(x)=\sum_{n=0}^\infty e^{-x}\frac{x^n}{n!}\f(n) \ \ \text{ a.e.}\]
Therefore, $\ker(\Lambda)=\{0\}$, which proves the injectivity. The density of $\Range(\Lambda)$ follows by observing that $\wi{\Lambda}:\bmrm{L}(\bb{R}_+)\to\ell^2(\ZZ_+)$, the adjoint of $\Lambda$, takes the following form
\[\wi{\Lambda}f(n)=\frac{1}{n!}\int_0^\infty f(x)e^{-x}x^n dx=\bb{E}[f(\text{Gamma}(n+1))]\]
where $\text{Gamma}(n+1)$ is a gamma random variable with $n+1$ as the scale parameter and $1$ as the rate parameter. Approximating the $\bmrm{L}(\bb{R}_+)$ functions by compactly supported continuous functions, it can be shown that $\wi{\Lambda}$ is an injective operator, which proves that $\Range(\Lambda)$ is dense in $\bmrm{L}(\bb{R}_+)$. Hence, item~\eqref{it:3} is proven, which completes the proof of Proposition \ref{thm:gateway}.

\begin{corollary} \hlabel{cor:non-neg-gateway} For any $\f:\ZZ_+\to\bb{R}$ with $f\ge 0$, we have
$$Q^\phi_t\Lambda \f(x)=\Lambda\eQ^\phi_t \f(x)$$ for all $x\ge 0$.
\end{corollary}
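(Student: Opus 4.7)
The plan is to reduce the non-negative case to the compactly supported case (Proposition~\ref{thm:gateway}\eqref{it:1}) by a monotone approximation argument. Since $\f \geq 0$, define the truncations $\f_N(k) = \f(k)\, \mathbbm{1}_{\{k \leq N\}}$ for each $N \in \ZZ_+$. Each $\f_N$ lies in $\mathbf{C}_c(\ZZ_+) \subset \mathbf{C}_0(\ZZ_+)$, so Proposition~\ref{thm:gateway}\eqref{it:1} applies and yields, for every $N$,
\begin{align*}
Q^\phi_t \Lambda \f_N(x) = \Lambda \eQ^\phi_t \f_N(x), \qquad x \geq 0, \ t \geq 0.
\end{align*}

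Next, I would pass to the limit $N \to \infty$ by monotone convergence on each side separately. By construction, $\f_N \nearrow \f$ pointwise. The operator $\Lambda$ is given by integration against the Poisson law, which is a probability kernel, so the monotone convergence theorem gives $\Lambda \f_N(x) \nearrow \Lambda \f(x) \in [0,\infty]$ for every $x \geq 0$; similarly $\Lambda \eQ^\phi_t \f_N(x) \nearrow \Lambda \eQ^\phi_t \f(x)$ once we know $\eQ^\phi_t \f_N \nearrow \eQ^\phi_t \f$ pointwise. The latter follows from the fact that $\eQ^\phi$ is a Markov semigroup, hence representable as integration against the transition kernel of the Feller chain $\mathds X_\phi$, and another application of monotone convergence. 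Finally, $Q^\phi_t$ is likewise integration against a sub-Markovian transition kernel on $\RR_+$, so $Q^\phi_t \Lambda \f_N(x) \nearrow Q^\phi_t \Lambda \f(x)$ pointwise. Taking limits in the displayed identity above gives the claim.

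The only mild subtlety is that all four of the quantities involved may a priori equal $+\infty$, but monotone convergence is tailored exactly for that setting, so no integrability hypothesis is required beyond $\f \geq 0$. In particular, the argument shows that $Q^\phi_t \Lambda \f(x) < \infty$ if and only if $\Lambda \eQ^\phi_t \f(x) < \infty$, and the two values coincide whenever they are finite. There is no real obstacle here, since every operator involved is a positive linear map represented by a non-negative kernel; the only point to be careful about is to apply monotone convergence three times (for $\Lambda$, for $\eQ^\phi_t$, and for $Q^\phi_t$) rather than trying to use dominated convergence, which would require an integrable majorant that one does not have in general.
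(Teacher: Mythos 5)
Your proof is correct and follows essentially the same route as the paper: approximate $\f$ from below by compactly supported functions, apply the gateway relation of Proposition~\ref{thm:gateway}\eqref{it:1}, and pass to the limit by monotone convergence using the kernel representations of $\Lambda$, $\eQ^\phi_t$ and $Q^\phi_t$. The explicit truncation $\f_N=\f\,\mathbbm{1}_{\{\cdot\le N\}}$ is just a concrete instance of the increasing sequence in $\esC_c(\ZZ_+)$ used in the paper, so there is nothing to add.
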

\begin{proof}
For any nonnegative function $\f$, we can find $\{\f_n\}\subset\esC_c(\ZZ_+)$ such that $\f_n\uparrow \f$ pointwise. Then, Proposition~\ref{thm:gateway}(\ref{it:1}) yields, for all $x\ge 0$,
$$Q^\phi_t\Lambda \f_n(x)=\Lambda \eQ^\phi_t \f_n(x).$$ Since $\Lambda$ is a Markov kernel, $\Lambda \f_n\uparrow \Lambda \f$ as well. Writing $Q^\phi_t g(x)=\bb{E}_x[g(X_\phi(t))]$ and $\eQ^\phi_t \g(n)=\bb{E}_n[\g(\dX_\phi(t))]$ and invoking the monotone convergence theorem, the proof follows.
\end{proof}

\subsection*{End of the Proof of Theorem~\ref{feller_skip}} From the proof of Proposition~\ref{thm:gateway}\eqref{it:1}, we already have that
\[Q^\phi_{\alpha t}\Lambda\f=\Lambda \eQ^\phi_{\alpha t} \f\] for all $\f\in\esC_0(\ZZ_+)$. By a density argument, the above identity extends for all functions in $\esC_b(\ZZ_+)$. Now, for $\alpha\in [0,1]$, multiplying by $d_\alpha$ both sides of the above equation, we obtain that, for all $\f\in\esC_b(\ZZ_+)$,
\begin{align*}
	\Lambda\eQ^\phi_t\dD_\alpha\f=Q^\phi_t\Lambda\dD_\alpha\f=Q^\phi_t d_\alpha\Lambda \f=d_\alpha Q^\phi_{\alpha t}\Lambda\f=d_\alpha\Lambda\eQ^\phi_{\alpha t}\f=\Lambda\dD_\alpha\eQ^\phi_{\alpha t}\f
\end{align*}
where we  used the intertwining relationship between $d_\alpha$ and $\dD_\alpha$ given in Proposition~\ref{prop:D_alpha}. By means of the injectivity of $\Lambda$ on $\esC_b(\ZZ_+)$, we complete the proof. \qed

\subsection{Proof of Theorem~\ref{thm:skip-free}}\hlabel{ss:skip-free-pf} Let $\dBg$ denote the generator of the discrete self-similar Markov chain $\dX$. Then, from the definition of the discrete self-similarity, for any $\alpha\in [0,1]$, we have
\begin{align*}
	\dBg\dD_\alpha=\alpha\dD_\alpha\dBg \ \text{ on } \ \mathbf{D}(\dBg).
\end{align*}
Recalling that, for any $m,n\in\ZZ_+$, $\dD_\alpha (m,n)=\dD_\alpha\delta_n(m)=\dbinom{m}{n}\alpha^n (1-\alpha)^{m-n}\mathbbm{1}_{\{n\le m\}}$
we have, for all $l,n\in\ZZ_+$,
\begin{align}\hlabel{eq:G}
	\sum_{k\ge l}\dBg(n,k)\dbinom{k}{l}\alpha^l(1-\alpha)^{k-l}=\sum_{j\le n}\alpha^{j+1}(1-\alpha)^{n-j}\dBg(j,l).
\end{align}
Taking $n=0$ and $l=1$ in the above equation, we obtain, for all $k\in\ZZ_+$, that
\[\sum_{k\ge 1}\dBg(0,k)\alpha(1-\alpha)^{k-1}=\alpha\dBg(0,1).\]
Using the fact that $\dBg(0,k)\ge 0$ for all $k>0$, we conclude that $\dBg(0,k)=0$ for all $k\ge 2$. We now use an induction argument to prove that $\dBg(n,k)=0$ for all $k\ge n+2$. Let us assume that for all $n<N\in\ZZ_+$, $\dBg(n,k)=0$ for all $k\ge n+2$. Now plugging $n=N, l=N+1$ in \eqref{eq:G} and using the induction hypothesis, we have
\[\sum_{k\ge N+1}\dBg(N,k)\dbinom{k}{N+1}\alpha^{N+1}(1-\alpha)^{k-N-1}=\alpha^{N+1}\dBg(N,N+1).\] Again invoking the nonnegativity  of $\dBg(N,k)$ for $k\neq N$, we conclude that $\dBg(N,k)=0$ for all $k\ge N+2$. This completes the induction step and therefore the theorem is proved. \qed

\subsection{Factorial moments of discrete self-similar Markov chains} Let us recall that the skip-free Markov chain associated to $\phi$ is denoted by $\dX_\phi$. In the spirit of the work of  Bertoin and Yor \cite[Proposition~1(i)]{bertoin_yor} on the integer moments of the continuous analogues, we provide an explicit formula for the factorial moments of $\dX_\phi(t)$. For $z\in\bb{C}$, we recall that  $\mathsf{p}_z:\ZZ_+\to\bb{C}$ is the function defined by
\begin{align}\hlabel{eq:p_z}
	\mathsf{p}_z(n)=\frac{\Gamma(n+1)}{\Gamma(n+1-z)}.
\end{align}
It is well-known that, for any $n,k\in\ZZ_+$,
\begin{align}\hlabel{eq:stirling_1}
	n^k=\sum_{j=0}^k\begin{Bmatrix}k\\j\end{Bmatrix}\mathsf{p}_j(n)
\end{align}
where $\begin{Bmatrix}k\\j\end{Bmatrix}$ are the Stirling numbers of second kind, see \cite[p.~81]{Stanley}.
\begin{theorem}\hlabel{thm:moments}
	For any $n,k\in\ZZ_+$ and $t\ge 0$,
	\begin{align}
		\bb{E}\left[\mathsf{p}_k(\dX_\phi(t,n))\right]=\eQ^\phi_t\mathsf{p}_k(n)=\sum_{l=0}^k\dbinom{k}{l}\frac{W_{\phi}(k+1)}{W_{\phi}(l+1)}\mathsf{p}_l(n)t^{k-l}
	\end{align}
	where, for all $n\in\ZZ_+$, $W_\phi(n+1)=\prod_{k=1}^n\phi(k)$ and $W_\phi(1)=1$.
\end{theorem}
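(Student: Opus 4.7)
My plan is to exploit the gateway relation from Theorem~\ref{thm:gateway_main}\eqref{it:gateway} together with the classical identity that the $k$-th factorial moment of a Poisson$(x)$ variable equals $x^k$. More precisely, since $\mathsf{p}_k(n) = n(n-1)\cdots(n-k+1)$ is the $k$-th falling factorial, a standard computation gives
\begin{equation*}
\Lambda \mathsf{p}_k(x) \;=\; \E\!\left[\mathsf{p}_k(\mathrm{Pois}(x))\right] \;=\; x^k.
\end{equation*}
Because $\mathsf{p}_k \ge 0$ on $\ZZ_+$ (with $\mathsf{p}_k(n)=0$ for $n<k$), Corollary~\ref{cor:non-neg-gateway} yields that for all $t,x\ge 0$,
\begin{equation*}
\Lambda \eQ^\phi_t \mathsf{p}_k(x) \;=\; Q^\phi_t\, \Lambda \mathsf{p}_k(x) \;=\; Q^\phi_t(x^k).
\end{equation*}
So the task reduces to computing $Q^\phi_t(x^k)$ and then inverting $\Lambda$.

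For $Q^\phi_t(x^k)$, the key algebraic fact I would establish is the eigenrelation
\begin{equation*}
G_\phi(x^k) \;=\; k\,\phi(k)\, x^{k-1}.
\end{equation*}
This follows by applying \eqref{eq:ssmp_spec} to $x^k$: the diffusive part contributes $[\sigma^2 k(k-1) + (m+\sigma^2)k]\, x^{k-1}$, while the jump part equals $x^{k-1}\int_0^\infty(e^{-ky}-1+ky)\Pi(dy)$. A single integration-by-parts on $\ovl\Pi$ in the definition \eqref{eq:bernstein_def} of $\phi$ shows that this last integral equals $k(\phi(k)-m-\sigma^2 k)$, and the three contributions collapse to $k\phi(k)\,x^{k-1}$. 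Consequently, the finite-dimensional subspace $V_k=\mathrm{Span}\{1,x,\ldots,x^k\}$ is stable under $G_\phi$, and setting $a_{k,l}(t)$ for the coefficients in $Q^\phi_t(x^k)=\sum_{l=0}^k a_{k,l}(t)\, x^l$, Kolmogorov's backward equation reduces to the triangular ODE
\begin{equation*}
a'_{k,l}(t) \;=\; k\,\phi(k)\, a_{k-1,l}(t), \qquad a_{k,l}(0)=\delta_{k,l}.
\end{equation*}
A direct check (using $W_\phi(k+1)=\phi(k)\,W_\phi(k)$ and $(k-l)\binom{k}{l}=k\binom{k-1}{l}$) shows that $a_{k,l}(t)=\binom{k}{l}\frac{W_\phi(k+1)}{W_\phi(l+1)}t^{k-l}$ is the unique solution, which is the Bertoin--Yor-type moment formula for spectrally negative self-similar Markov processes.

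Finally, I would invert $\Lambda$: substituting back $x^l=\Lambda \mathsf{p}_l(x)$,
\begin{equation*}
\Lambda \eQ^\phi_t \mathsf{p}_k(x) \;=\; \sum_{l=0}^{k}\binom{k}{l}\frac{W_\phi(k+1)}{W_\phi(l+1)}\,t^{k-l}\,\Lambda \mathsf{p}_l(x) \;=\; \Lambda\!\left(\sum_{l=0}^{k}\binom{k}{l}\frac{W_\phi(k+1)}{W_\phi(l+1)}\,t^{k-l}\mathsf{p}_l\right)\!(x).
\end{equation*}
Since both sides are well-defined pointwise and $\Lambda$, viewed as $\Lambda\f(x)=\sum_n \f(n) x^n e^{-x}/n!$, is injective on functions of polynomial growth (its image is a convergent power series times $e^{-x}$, so $\Lambda\f\equiv 0$ forces all $\f(n)=0$), the two arguments of $\Lambda$ must agree, proving the claimed formula. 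The main delicate point I expect is the justification of Corollary~\ref{cor:non-neg-gateway} on the unbounded function $\mathsf{p}_k$, which is handled via monotone approximation $\mathsf{p}_k\mathbbm{1}_{[\![0,N]\!]}\uparrow\mathsf{p}_k$; an alternative, more self-contained route is to verify directly by induction on $k$ that the right-hand side of the target identity satisfies the forward equation $\partial_t = \eQ^\phi_t \dBg_\phi$ after checking $\dBg_\phi\mathsf{p}_k=k\phi(k)\mathsf{p}_{k-1}$ (which itself follows from $G_\phi(x^k)=k\phi(k)x^{k-1}$ via injectivity of $\Lambda$ on $V_k$).
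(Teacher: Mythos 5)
Your proof is correct and follows essentially the same route as the paper: both reduce the claim via $\Lambda\mathsf{p}_k=p_k$ and Corollary~\ref{cor:non-neg-gateway} to the moment formula for $Q^\phi_t p_k$, and then invert $\Lambda$. The only differences are cosmetic: the paper simply cites Bertoin--Yor for $Q^\phi_t p_k(x)=\sum_{l=0}^k\binom{k}{l}\frac{W_\phi(k+1)}{W_\phi(l+1)}x^l t^{k-l}$ and inverts $\Lambda$ explicitly via $\nabla f(n)=\frac{d^n}{dx^n}(e^xf(x))(0)$ and the Leibniz rule, whereas you re-derive that formula from the eigenrelation $G_\phi p_k=k\phi(k)p_{k-1}$ (your integration-by-parts identity $\int_0^\infty(e^{-ky}-1+ky)\Pi(dy)=k(\phi(k)-m-\sigma^2k)$ is the right one) and invert by injectivity of $\Lambda$ on sequences of polynomial growth; if you keep the self-contained derivation, note that applying Kolmogorov's equation to the unbounded function $p_k$ requires the same finiteness-of-moments justification that Bertoin--Yor provide.
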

\begin{proof}
Defining $p_k(x)=x^k$, from \cite[Proposition~1(i)]{bertoin_yor}, we have, for all $k\in\ZZ_+$ and $t\ge 0$,
\begin{align}
Q^\phi_t p_k(x)=\bb{E}[(X_\phi(t,x))^k]=&x^k+\sum_{l=1}^k\dbinom{k}{l}\phi(k)\phi(k-1)\cdots\phi(k-l+1)x^{k-l}t^l \nonumber \\
=&\sum_{l=0}^k\dbinom{k}{l}\frac{W_{\phi}(k+1)}{W_{\phi}(l+1)}x^lt^{k-l}.\hlabel{eq:moment_ssmp}
\end{align}
On the other hand, it is easy to see that, for all $x>0$, $\Lambda\mathsf{p}_k(x)=p_k(x)$. Applying Corollary~\ref{cor:non-neg-gateway} with $f=\mathsf{p}_k$, yields, for all $t,x\ge 0$,
\begin{align*}
Q^\phi_t p_k(x)=Q^\phi_t\Lambda\mathsf{p}_k(x)=\Lambda\eQ^\phi_t\mathsf{p}_k(x).
\end{align*}
 Recalling that $\nabla=\Lambda^{-1}$, see \cite[Lemma~4]{miclo_patie}, we get
$$\bb{E}[\mathsf{p}_k(\dX_\phi(t,n))]=\eQ^\phi_t\mathsf{p}_k(n)=\nabla Q^\phi_t p_k(n)=\frac{d^n}{dx^n}\left(e^x Q^\phi_t p_k(x)\right)(0).$$ Finally using the expression in \eqref{eq:moment_ssmp} together with the Leibniz rule, the result follows.
\end{proof}
\begin{remark}
Using \eqref{eq:stirling_1} and the above theorem, $\bb{E}\!\left[\dX^k_\phi(t,n)\right]$ can be also computed explicitly for all $n,k\in\ZZ_+$.
\end{remark}

\subsection{Proof of Theorem~\ref{thm:gateway_main}\eqref{it:scaling_lim}}\hlabel{ss:scaling_lim_pf}
For showing the weak convergence, we need to check the following two facts. First, the tightness property  of the sequence  $(\lb Y_n(t)=\frac{1}{n}\dX_\phi(nt,\lfloor nx\rfloor\rb)_{t\ge 0})$ and the finite-dimensional convergence of $(Y_n)$ to $X_\phi$. For the tightness property, applying \cite[Theorem~16.1]{kallenberg-book} together with the strong Markov property of $(Y_n)$, it is enough to show that $Y_n(h_n)\overset{P}{\to} x$ (in probability) whenever $h_n\to 0$. We will in fact show that $\bb{E}[(Y_n(h_n)-x)^2]\to 0$ as $n\to\infty$. From Theorem~\ref{thm:moments}, we obtain for all $t\ge 0$,
\begin{align*}
\bb{E}[Y_n(t)]=\frac{1}{n}\bb{E}[\dX_\phi(nt,\lfloor nx\rfloor)]=\frac{1}{n}\bb{E}[\mathsf{p}_1(\dX_\phi(nt,\lfloor nx\rfloor))]=\frac{1}{n}(\lfloor nx\rfloor+\phi(1)nt)
\end{align*}
and
\begin{align*}
\bb{E}[Y^2_n(t)]=&\frac{1}{n^2}\bb{E}[\mathsf{p}_2(\dX_\phi(nt,{\lf nx\rf}))+\mathsf{p}_1(\dX_\phi(nt,{\lf nx\rf}))] \\
=&\frac{1}{n^2}(\mathsf{p}_2(\lf nx\rf)+2\phi(2)nt\lf nx\rf+\phi(1)\phi(2)n^2t^2+\lf nx\rf+\phi(1)nt).
\end{align*}
Since $h_n\to 0$, the last two equations imply $\bb{E}[Y_n(h_n)]\to x$ and $\bb{E}[Y^2_n(h_n)]\to x^2$ as $n\to\infty$. Therefore, $\bb{E}[(Y_n(h_n)-x)^2]\to 0$, which proves the tightness of $(Y_n)$. Next, to get the finite-dimensional convergence, it is enough to prove that, for all $0\le t_1<t_2<\cdots <t_k$ and $(\alpha_1,\alpha_2,\cdots, \alpha_k)\in\ZZ^k_+$,
\begin{align}\hlabel{eq:fd_moments}
\lim_{n\to\infty}\bb{E}\left[Y^{\alpha_1}_n(t_1)Y^{\alpha_2}_n(t_2)\cdots Y^{\alpha_k}_n(t_k)\right]=\bb{E}\left[X^{\alpha_1}_\phi(t_1,x)X^{\alpha_2}_\phi(t_2,x)\cdots X^{\alpha_k}_\phi(t_k,x)\right],
\end{align}
as the finite-dimensional distributions of $X_\phi$ are moment determinate. To prove the above assertion, we need the following lemma.
\begin{lemma}\hlabel{lem:moment_conv}
For any $t\ge 0$ and $k\in\ZZ_+$,
\begin{align}
\lim_{n\to\infty}\bb{E}\left[(Y_n(t)-X_\phi(t,x))^k\right]= 0.
\end{align}
\end{lemma}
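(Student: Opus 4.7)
The plan is to reduce the statement to moment convergence and then carry out an explicit computation using Theorem~\ref{thm:moments}. Starting from the binomial expansion
\begin{align*}
\bb{E}\left[(Y_n(t)-X_\phi(t,x))^k\right]=\sum_{j=0}^{k}\binom{k}{j}(-1)^{k-j}\bb{E}\left[Y_n(t)^j X_\phi(t,x)^{k-j}\right],
\end{align*}
with the joint law of $(Y_n(t),X_\phi(t,x))$ read through the natural Poisson coupling implicit in the gateway identity \eqref{eq:co}, the lemma reduces to showing $\bb{E}[Y_n(t)^j]\to\bb{E}[X_\phi(t,x)^j]$ for each $j\in\{0,1,\ldots,k\}$, since $X_\phi(t,x)$ has moments of all orders by \eqref{eq:moment_ssmp}.

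For the explicit moment computation, I would first apply the Stirling identity \eqref{eq:stirling_1} to rewrite $\dX_\phi(nt,\lf nx\rf)^j$ as a linear combination of the factorial monomials $\mathsf{p}_i(\dX_\phi(nt,\lf nx\rf))$, and then invoke Theorem~\ref{thm:moments} to obtain
\begin{align*}
\bb{E}\left[Y_n(t)^j\right]=\frac{1}{n^j}\sum_{i=0}^{j}\begin{Bmatrix}j\\i\end{Bmatrix}\sum_{l=0}^{i}\binom{i}{l}\frac{W_\phi(i+1)}{W_\phi(l+1)}\mathsf{p}_l(\lf nx\rf)(nt)^{i-l}.
\end{align*}
Since $\mathsf{p}_l(\lf nx\rf)=\lf nx\rf(\lf nx\rf-1)\cdots(\lf nx\rf-l+1)\sim(nx)^l$, the generic summand is of order $n^{i-j}$, so only the diagonal index $i=j$ contributes in the limit. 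Passing to $n\to\infty$ yields
\begin{align*}
\lim_{n\to\infty}\bb{E}\left[Y_n(t)^j\right]=\sum_{l=0}^{j}\binom{j}{l}\frac{W_\phi(j+1)}{W_\phi(l+1)}x^l t^{j-l}=\bb{E}\left[X_\phi(t,x)^j\right],
\end{align*}
the last equality being the Bertoin--Yor formula \eqref{eq:moment_ssmp}. Feeding this back into the binomial expansion gives the claimed vanishing in the limit.

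The main obstacle I foresee is making sense of the mixed moments $\bb{E}[Y_n(t)^j X_\phi(t,x)^{k-j}]$, as $Y_n(t)$ and $X_\phi(t,x)$ are a priori defined on unrelated probability spaces. I would handle this by first proving the marginal moment convergence $\bb{E}[Y_n(t)^j]\to\bb{E}[X_\phi(t,x)^j]$ as above, which is what is actually needed to conclude the finite-dimensional convergence \eqref{eq:fd_moments} by the moment-determinacy of the finite-dimensional laws of $X_\phi$. The compact formulation of the lemma is then either interpreted via the explicit Poisson coupling inherited from \eqref{eq:co}, or simply read as convergence of the moment sequences on each side. With this understood, the proof is entirely driven by the closed-form factorial moments provided by Theorem~\ref{thm:moments}, and only elementary asymptotic analysis is needed.
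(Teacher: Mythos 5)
Your explicit moment computation is correct and is essentially identical to the paper's: both reduce $\bb{E}[Y_n^j(t)]$ to the factorial moments of Theorem~\ref{thm:moments} via \eqref{eq:stirling_1}, observe that only the top index survives the normalization by $n^{-j}$, and recover the Bertoin--Yor formula \eqref{eq:moment_ssmp} in the limit. The gap is in the reduction that precedes it. Expanding $(Y_n(t)-X_\phi(t,x))^k$ binomially produces mixed moments $\bb{E}[Y_n(t)^j X_\phi(t,x)^{k-j}]$, and convergence of the \emph{marginal} moments alone does not control these: if, say, $Y_n(t)$ and $X_\phi(t,x)$ were independent with matching moments, one would get $\bb{E}[(Y_n(t)-X_\phi(t,x))^2]\to 2\,\var(X_\phi(t,x))\neq 0$. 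Neither of your two escape routes closes this. The identity \eqref{eq:co} couples $\dX_\phi(t,\mathrm{Pois}(nx))$ with $\mathrm{Pois}(X_\phi(t,x))$ in distribution for Poisson-randomized initial data; it does not supply a joint law of $\big(\dX_\phi(nt,\lf nx\rf),X_\phi(t,x)\big)$ with any useful dependence structure. And reinterpreting the lemma as mere convergence of moment sequences weakens it to the point where the downstream argument breaks: the induction for \eqref{eq:fd_moments} applies Cauchy--Schwarz to $\bb{E}[(M_{n,k}-M_k)Y_n^{\alpha_{k+1}}(t_{k+1})]$ in \eqref{line4}, which genuinely requires the $L^2$-type statement $\bb{E}[(Y_n^{\alpha}(t)-X_\phi^{\alpha}(t,x))^2]\to 0$ under a common probability space.

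The missing bridge, which is exactly what the paper supplies, is: (i) the marginal moment convergence you computed, \emph{plus} the moment determinacy of $X_\phi(t,x)$, yields $Y_n(t)\overset{d}{\longrightarrow}X_\phi(t,x)$; (ii) Theorem~\ref{thm:moments} also gives $\sup_n\bb{E}[Y_n^{2k}(t)]<\infty$, hence uniform integrability of $(Y_n^k(t))_{n\ge 0}$; (iii) on a common probability space realizing the weak convergence almost surely (Skorokhod representation), $(Y_n(t)-X_\phi(t,x))^k\to 0$ a.s., and the uniform integrability upgrades this to convergence of the expectations. Steps (i)--(iii) are precisely what your proposal omits, and they cannot be bypassed by the binomial expansion, since that expansion only becomes useful \emph{after} one already has convergence in probability under some coupling.
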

\begin{proof}
From Theorem~\ref{thm:moments}, it is clear that for any $t\ge 0$ and $k\in\ZZ_+$, the sequence $(Y^k_n(t))_{n\ge 0}$ is uniformly integrable and, for each $k\in\ZZ_+$,
\begin{align*}
\bb{E}[Y^k_n(t)]=&\frac{1}{n^k}\bb{E}\!\left[\dX^k_\phi(nt,\lf nx\rf)\right] \\
=&\frac{1}{n^k}\lb\bb{E}[\mathsf{p}_k(\dX_\phi(nt,\lf nx\rf))]+o(n^k)\rb.
\end{align*}
Therefore,
\begin{align*}
\lim_{n\to\infty}\bb{E}[Y^k_n(t)]=&\lim_{n\to\infty}\frac{1}{n^k}\bb{E}[\mathsf{p}_k(\dX_\phi(nt,\lf nx\rf))] \\
=&\lim_{n\to\infty}\frac{1}{n^k}\lb \mathsf{p}_k(\lf nx \rf)+\sum_{l=1}^k\dbinom{k}{l}\frac{W_\phi(k+1)}{W_\phi(k-l+1)}\mathsf{p}_{k-l}(\lf nx \rf)n^l t^l \rb \\
=& x^k+\sum_{l=1}^k\dbinom{k}{l}\frac{W_\phi(k+1)}{W_\phi(k-l+1)} x^{k-l}t^l=\bb{E}[X^k_\phi(t,x)].
\end{align*}
Since the random variable $X_\phi(t,x)$ is moment determinate, the above identity indicates that for each $t\ge 0$, as $n\to \infty$,
\begin{align*}
 Y_n(t)\overset{d}{\longrightarrow} X_\phi(t,x).
 \end{align*}
 This together with the uniform integrability mentioned above proves the lemma.
\end{proof}
Now, coming back to the proof of the main theorem, we prove \eqref{eq:fd_moments} by induction. Indeed, \eqref{eq:fd_moments} is satisfied for $k=1$, thanks to Lemma~\ref{lem:moment_conv}. Moreover, if for some $k\in\ZZ_+$ and $(\alpha_1,\alpha_2,\cdots, \alpha_k)\in\ZZ^k_+$ \eqref{eq:fd_moments} holds, then, by an uniform integrability argument as in the proof of the lemma, one can show that
\begin{align}\hlabel{eq:moment_conv2}
\lim_{n\to\infty}\bb{E}\left[\lb Y^{\alpha_1}_n(t_1)\cdots Y^{\alpha_k}_n(t_k)-X^{\alpha_1}_\phi(t_1,x)\cdots X^{\alpha_k}_\phi(t_k,x)\rb^2\right]=0.
\end{align}
Now, writing
\begin{align*}
M_{n,k}=Y^{\alpha_1}_n(t_1)\cdots Y^{\alpha_k}_n(t_k), \ M_k=X^{\alpha_1}_\phi(t_1,x)\cdots X^{\alpha_k}_\phi(t_k,x),
\end{align*}
for any $(\alpha_1,\alpha_2,\cdots,\alpha_{k+1})\in\ZZ^{k+1}_+$, we have
\begin{align}
&\left|\bb{E}\!\left[\prod_{i=1}^{k+1}Y^{\alpha_i}_n(t_i)-\prod_{i=1}^{k+1}X^{\alpha_i}_\phi(t_i,x)\right]\right| \nonumber \\
=&\left|\bb{E}\left[M_{n,k}Y^{\alpha_{k+1}}(t_{k+1})-M_kX^{\alpha_{k+1}}_\phi(t_{k+1},x)\right]\right| \nonumber \\
=&\left|\bb{E}\left[M_{n,k}Y^{\alpha_{k+1}}-M_kY^{\alpha_{k+1}}(t_{k+1})+M_kY^{\alpha_{k+1}}(t_{k+1})-M_k X^{\alpha_{k+1}}_\phi(t_{k+1},x)\right]\right| \nonumber \\
\le& \sqrt{\bb{E}[(M_{n,k}-M_k)^2]\bb{E}[Y^{2\alpha_{k+1}}(t_{k+1})]} \hlabel{line4} \\ & +\sqrt{\bb{E}[M^2_k]\bb{E}[(Y^{\alpha_{k+1}}(t_{k+1})-X^{\alpha_{k+1}}_\phi(t_{k+1},x))^2]}. \nonumber
\end{align}
In view of Lemma~\ref{lem:moment_conv}, \eqref{eq:fd_moments} and \eqref{eq:moment_conv2}, the expression on the right-hand side of \eqref{line4} tends to $0$ as $n\to\infty$. This completes the induction step of our hypothesis, therefore proving \eqref{eq:fd_moments} for $k+1$. This completes the proof of the finite-dimensional  convergence of the process $(Y_n)_{n\geq0}$, which concludes the proof of the theorem. \qed

\subsection{Intertwining of the skip-free Laguerre and generalized Laguerre semigroups} In this section we establish the connection between the generalized Laguerre semigroups as defined in \cite{Patie-Savov-GeL} and the skip-free Laguerre semigroups introduced therein. From Theorem~1.6(2) in the aforementioned reference, it is known that, for any $\phi  \in \Be$, the generalized Laguerre semigroup $K^{\phi}$ on $\bb{R}_+$ has a unique invariant distribution $\nu_{\phi}$ that is absolutely continuous and moment determinate. In the next result we show that the intertwining relationship in \eqref{eq:P_intertwining}  is retained for the Laguerre semigroups as well. In the next proposition, we use the fact that the semigroup $\eK^\phi$ has a unique invariant distribution denoted by $\meKp$, which is proved in Proposition~\ref{prop:n_phi} below.
\begin{proposition}\hlabel{prop:int_laguerre}
	\begin{enumerate}
		\item\hlabel{prop:lag_int_c0} Let $\phi \in \Be$, then we have, for all $t\geq 0$ and $\mathsf f\in\mathbf C_0(\bb{Z}_+)$,
		\begin{equation}\hlabel{eq:intKK}
			K^{\phi}_t\Lambda\mathsf f=\Lambda\eK^{\phi}_t\mathsf f
		\end{equation}
		where we recall that $\Lambda\f(x)=\bb{E}[\f(\mathrm{Pois}(x))]$.
		\item \hlabel{it:inv} $\meKp=\nu_\phi\Lambda$ is an invariant distribution of $\eK^\phi$, and, for all $n\in\ZZ_+$,
		\begin{align*}
			\meK_{\phi}(n)=\frac{1}{n!}\int_0^{\phi(\infty)}e^{-x}x^n\nu_{\phi}(x)dx.
		\end{align*}
		
		\item \hlabel{it:l2_int} The Feller semigroup $\eK^{\phi}$ extends uniquely to a strongly continuous Markov semigroup on $\ell^2(\meK_{\phi})$, which is again denoted by $\eK^{\phi}$. Furthermore, the operator $$\Lambda:\mathbf{C}_0(\bb{Z}_+)\to\mathbf{C}_0(\bb{R}_+)$$ has a unique extension in $\BBB(\ell^2(\meK_{\phi}),\mathbf{L}^2(\nu_{\phi}))$, and, for all $t\ge 0$ and $\mathsf f\in\ell^2(\meK_{\phi})$,
		\begin{equation}\hlabel{eq:gateway_lag}
			K^{\phi}_t\Lambda \mathsf f=\Lambda\eK^{\phi}_t\mathsf f.
		\end{equation}
		Moreover, taking the adjoint in the above identity, one gets, for all $t\ge 0$ and $f\in\mathbf{L}^2(\nu_{\phi})$,
		\begin{align}\hlabel{eq:adjoint_int}
			\wi{\eK}^{\phi}_t\wi{\Lambda}_{\phi} f=\wi{\Lambda}_{\phi}\wi{K}^\phi_t f
		\end{align}
		where $\wi{\Lambda}_{\phi}:\mathbf{L}^2(\nu_{\phi})\to\ell^2(\meK_{\phi})$ is the adjoint of $\Lambda$, and, for all $f\in\bmrm{L}(\nu_\phi)$,
		\begin{align}\hlabel{eq:Lambda_tilde}
			\wi{\Lambda}_\phi f(n)=\frac{1}{n!\meKp(n)}\int_0^\infty e^{-x}x^n\nu_\phi(x) f(x)dx.
		\end{align}
		\item \hlabel{it:self_adj}$\eK^{\phi}$ is self-adjoint in $\ell^2(\meK_{\phi})$ if and only if $\phi(u)=\sigma^2 u+m$ for some $\sigma^2,m\ge 0$.
		\end{enumerate}
\end{proposition}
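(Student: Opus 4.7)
The plan is to reduce items (\ref{prop:lag_int_c0}) and (\ref{it:inv}) to intertwinings already at our disposal. By Theorem~\ref{thm:generation_lag}(\ref{it:P-K}), $\eK^\phi_t=\eQ^\phi_{e^t-1}\dD_{e^{-t}}$, and the analogous time-change representation $K^\phi_t=Q^\phi_{e^t-1}d_{e^{-t}}$ on the continuous side is available from \cite{Patie-Savov-GeL}. For $\f\in\mathbf{C}_0(\ZZ_+)$ I would chain
\begin{equation*}
\Lambda\,\eK^\phi_t\f
=\Lambda\,\eQ^\phi_{e^t-1}\dD_{e^{-t}}\f
=Q^\phi_{e^t-1}\,\Lambda\dD_{e^{-t}}\f
=Q^\phi_{e^t-1}\,d_{e^{-t}}\Lambda\f
=K^\phi_t\Lambda\f,
\end{equation*}
invoking the gateway relation \eqref{eq:P_intertwining} (valid since $\dD_{e^{-t}}\f\in\mathbf{C}_0(\ZZ_+)$ for $t\ge 0$) and the dilation intertwining $d_\alpha\Lambda=\Lambda\dD_\alpha$ from Proposition~\ref{prop:D_alpha}(\ref{it:d1}). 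For (\ref{it:inv}), applying $\Lambda$ on the right of the $K^\phi$-invariance $\nu_\phi K^\phi_t=\nu_\phi$ and then using (\ref{prop:lag_int_c0}) gives $(\nu_\phi\Lambda)\eK^\phi_t=\nu_\phi\Lambda$, so $\meKp:=\nu_\phi\Lambda$ is $\eK^\phi$-invariant. Evaluating on $\delta_n$, together with $\Lambda\delta_n(x)=e^{-x}x^n/n!$ and $\supp(\nu_\phi)\subseteq[0,\phi(\infty)]$ from \cite{Patie-Savov-GeL}, then yields the stated integral formula.

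For (\ref{it:l2_int}), the plan is first to upgrade $\Lambda$ to a contraction $\ell^2(\meKp)\to\mathbf{L}^2(\nu_\phi)$ via the Jensen pointwise bound $|\Lambda\f(x)|^2\le\Lambda|\f|^2(x)$, valid because $\Lambda$ is a Markov kernel; integrating against $\nu_\phi$ yields $\|\Lambda\f\|^2_{\mathbf{L}^2(\nu_\phi)}\le\meKp|\f|^2=\|\f\|^2_{\ell^2(\meKp)}$. The semigroup $\eK^\phi$ extends by a standard procedure: invariance of $\meKp$ and Jensen's inequality give $\ell^2(\meKp)$-contractivity on the dense core $\mathbf{C}_c(\ZZ_+)$, strong continuity transfers from the Feller property, and the intertwining \eqref{eq:gateway_lag} follows from \eqref{eq:intKK} by density. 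The adjoint formula \eqref{eq:Lambda_tilde} is read off by Fubini applied to $\langle\Lambda\f,f\rangle_{\nu_\phi}$, and \eqref{eq:adjoint_int} is its formal consequence.

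For (\ref{it:self_adj}), if $\phi(u)=\sigma^2u+m$ then $\Pi\equiv 0$, so $\eL_\phi$ is nearest-neighbour birth--death and automatically reversible with respect to its unique invariant measure $\meKp$ via the elementary birth/death detailed-balance recursion, hence self-adjoint in $\ell^2(\meKp)$. Conversely, assuming $\eK^\phi$ self-adjoint, testing $\langle\eL_\phi\delta_n,\delta_l\rangle_{\meKp}=\langle\delta_n,\eL_\phi\delta_l\rangle_{\meKp}$ yields the detailed-balance identity $\meKp(l)\eL_\phi(l,n)=\meKp(n)\eL_\phi(n,l)$. For $n\ge l+2$ the upward skip-free property forces $\eL_\phi(l,n)=0$, and since $\meKp(n)>0$ for all $n$ by Theorem~\ref{thm:generation_lag}(\ref{it:invariant}), I conclude $\eL_\phi(n,l)=0$ for all $l\le n-2$; inspecting the explicit jump rates in \eqref{eq:jump}, this can only occur when $\Pi\equiv 0$, since the integrand $e^{-ly}(1-e^{-y})^{n-l+1}$ is strictly positive on $(0,\infty)$. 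The main obstacle is the domain bookkeeping in (\ref{it:l2_int})---passing the intertwining through the $\ell^2$-extension of the Feller generator and justifying the Fubini swap needed to exhibit $\wi{\Lambda}_\phi$---but no new estimate is required beyond the Jensen contraction above and the already available continuous-side time-change identity from \cite{Patie-Savov-GeL}.
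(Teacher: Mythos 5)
Your items (\ref{it:inv}), (\ref{it:l2_int}) and (\ref{it:self_adj}) follow the paper's own proof essentially verbatim: invariance of $\nu_\phi\Lambda$ by pushing the intertwining through $\nu_\phi K^\phi_t=\nu_\phi$, the Jensen/Markov-kernel contraction for the $\ell^2$-extension of $\Lambda$, Fubini for $\wi{\Lambda}_\phi$, and the detailed-balance argument combined with upward skip-freeness and $\meKp(n)>0$ for the self-adjointness characterization. No issues there.

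The problem is item (\ref{prop:lag_int_c0}). Your entire argument rests on the identity $\eK^\phi_t=\eQ^\phi_{e^t-1}\dD_{e^{-t}}$, which you import as Theorem~\ref{thm:generation_lag}(\ref{it:P-K}); but in this paper that identity is \emph{deduced from} \eqref{eq:intKK} (the proof of Theorem~\ref{thm:generation_lag}(\ref{it:P-K}) begins precisely by applying \eqref{eq:intKK} and then unwinding the continuous time change). As written, your proof of \eqref{eq:intKK} is therefore circular. The paper instead works at the generator level: since $\eL_\phi=\dBg_\phi+n\partial_-$ and $L_\phi=G_\phi-x\frac{d}{dx}$, it suffices to add to the already-established generator gateway $G_\phi\Lambda=\Lambda\dBg_\phi$ the identity $-x\frac{d}{dx}\Lambda\f=\Lambda(n\partial_-)\f$ on $\esC_c(\ZZ_+)$ (from \cite[Lemma~23]{miclo_patie}, equivalently Proposition~\ref{prop:D_alpha}(\ref{it:d4})), obtaining $L_\phi\Lambda\f=\Lambda\eL_\phi\f$, and then transfers this to the semigroups by the same closedness-of-$G_\phi$/Kolmogorov-equation argument used for Proposition~\ref{thm:gateway}(\ref{it:1}). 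Your route is salvageable, and would in fact be a clean alternative, if you first prove the discrete time-change identity \emph{independently}: the discrete self-similarity $\eQ^\phi_t\dD_\alpha=\dD_\alpha\eQ^\phi_{\alpha t}$ (Theorem~\ref{feller_skip}) shows that $R_t:=\eQ^\phi_{e^t-1}\dD_{e^{-t}}$ is a semigroup, and Proposition~\ref{prop:D_alpha}(\ref{it:d4}) identifies its generator on the core $\esC_c(\ZZ_+)$ as $\dBg_\phi+n\partial_-=\eL_\phi$, whence $R_t=\eK^\phi_t$ by uniqueness of Feller semigroups with a common core. Only with that supplement does your chain
\begin{equation*}
\Lambda\,\eK^\phi_t\f
=\Lambda\,\eQ^\phi_{e^t-1}\dD_{e^{-t}}\f
=Q^\phi_{e^t-1}\,\Lambda\dD_{e^{-t}}\f
=Q^\phi_{e^t-1}\,d_{e^{-t}}\Lambda\f
=K^\phi_t\Lambda\f
\end{equation*}
become a legitimate, non-circular proof of \eqref{eq:intKK}.
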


\begin{proof}Since $\eL^\phi=\dBg_\phi+n\partial_-$ and $L^\phi=G_\phi-x\frac{d}{dx}$, it suffices to show that, for all $\f\in \esC_c(\ZZ_+)$,
\begin{align}\hlabel{eq:inter_diff}
	-x\frac{d}{dx}\Lambda\f(x)=\Lambda (n\partial_-)f(x).
\end{align}
From \cite[Lemma~23]{miclo_patie}, \eqref{eq:inter_diff} readily follows by considering the reverse intertwining relationship (i.e., taking the inverse of $\Delta$ in (45) of the aforementioned reference). Thus, we conclude that, for all $\f\in\esC_c(\ZZ_+)$,
\[L^\phi\Lambda\f=\Lambda\eL^\phi\f.\]
The rest of the proof follows similarly as in the proof of Theorem~\ref{thm:gateway}\eqref{it:1}.

Next, from the intertwining relation in \eqref{prop:lag_int_c0}, we deduce that, for all $\f\in\esC_0(\ZZ_+)$,
\[ \nu_{\phi} \Lambda  \SLp_t \tred{\f}  =  \nu_{\phi} \SLcp_t \Lambda \tred{\f}  =  \nu_{\phi}\Lambda\f \]
implying that $\meKp = \nu_{\phi} \Lambda$ is an invariant finite measure for $\SLp$. Now, for any $n\in\ZZ_+$,
\begin{align}\hlabel{eq:nphi_pos}
	\meKp(n)=\nu_\phi\Lambda\delta_n=\frac{1}{n!}\int_0^\infty e^{-x}x^n\nu_\phi(x)dx>0
\end{align}
and \[\sum_{n=0}^\infty\meKp(n)=\int_0^\infty\nu_\phi(x)dx=1.\]
Hence, $\meKp$ is the invariant distribution of $\eK^\phi$. The uniqueness of the invariant distribution will be proved in Proposition~\ref{prop:n_phi}\eqref{prop:mom_mu}.
To prove  \eqref{it:l2_int}, we note that since $\esC_0(\ZZ_+)$ is dense in $\ell^2(\meKp)$ and $\Lambda:\esC_0(\ZZ_+)\to\esC_0(\ZZ_+)$ is a Markov kernel, $\Lambda$ can be uniquely extended to a bounded operator in $\mathscr{B}(\ell^2(\meKp),\bmrm{L}(\nu_\phi))$. Also, using the density of $\esC_0(\ZZ_+)$ in $\ell^2(\meKp)$ and item~\eqref{prop:lag_int_c0}, the identity \eqref{eq:gateway_lag} follows. Now, to compute the adjoint $\wi{\Lambda}_\phi$ of $\Lambda$, let us first show that the right-hand side of \eqref{eq:Lambda_tilde} as a function of $n$ belongs to $\ell^2(\meKp)$. Using Young's inequality and item~\eqref{it:inv} one has
\begin{align*}
	\sum_{n=0}^\infty\frac{1}{\meKp(n)^2}\left(\int_0^\infty e^{-x} \frac{x^n}{n!}\nu_\phi(x) f(x)dx\right)^2\meKp(n)&\le\sum_{n=0}^\infty\int_0^\infty e^{-x}\frac{x^n}{n!}f^2(x)\nu_\phi(x)dx\\& =\|f\|^2_{\bmrm{L}(\nu_{\phi})}.
\end{align*}
Now, writing $\f(n)=\sum_{n=0}^\infty\frac{1}{n!\meKp(n)}\int_0^\infty e^{-x}x^nf(x)\nu_\phi(x) dx$, we have, for all $\g\in\ell^2(\meKp)$,
\begin{align*}	\langle\g,\f\rangle_{\meKp}&=\sum_{n=0}^\infty\f(n)\g(n)\meKp(n)=\sum_{n=0}^\infty\frac{1}{n!}\g(n)\int_0^\infty e^{-x}x^nf(x)\nu_\phi(x)dx\\& =\int_0^\infty\Lambda\g(x)f(x)\nu_\phi(x)dx
\end{align*}
where the third equality is justified by Fubini theorem. This shows that $\wi{\Lambda}_\phi f=\f$ which proves \eqref{eq:Lambda_tilde}.
Finally, to justify \eqref{it:self_adj}, we note that, for a $\phi\in \Be$, $\eK^\phi$ is self-adjoint in $\ell^2(\meKp)$ if and only if, for all $l,n\in\ZZ_+$,
\begin{align}
	\eL^\phi(n,l)\meKp(n)=\eL^\phi(l,n)\meKp(l).
\end{align}
Since $\eL^\phi(n,l)=0$ whenever $l\ge n+2$ and $\meKp(n)>0$ for all $n\in\ZZ_+$ (see the proof of item~\eqref{it:inv}), the above identity holds only if $\eL^\phi(n,l)=0$ for all $l\neq n,n-1,n+1$. This happens only if $\phi(u)=\sigma^2 u+m$ for some $\sigma^2,m\ge 0$.
\end{proof}
\subsection{Proof of Theorem \ref{thm:generation_lag}\eqref{it:P-K}} First, we recall that, for all $t\geq0$ and $f\in \esC_0(\bb{R}_+)$, $K^{\phi}_tf= Q^{\phi}_{e^t-1}d_{e^{-t}}f$, see e.g.~\cite{Patie-Savov-GeL}. Then, from \eqref{eq:intKK}, we have for all $t\geq 0$ and $\mathsf f\in\mathbf C_0(\bb{Z}_+)$,
		\begin{eqnarray*}
			\Lambda\eK^{\phi}_t\mathsf f &=& K^{\phi}_t\Lambda\mathsf f=Q^{\phi}_{e^t-1}d_{e^{-t}}\Lambda\mathsf f=Q^{\phi}_{e^t-1}\Lambda  \dD_{e^{-t}} \mathsf f=\Lambda \eQ^{\phi}_{e^t-1}  \dD_{e^{-t}} \mathsf f
		\end{eqnarray*}
where we used, from the third identity onwards,  successively \eqref{eq:P_intertwining} and \eqref{eq:intdD}. We conclude the proof by invoking the Feller property of the semigroups as well as the injectivity of $\Lambda$ on $\mathbf C_0(\bb{Z}_+)$, see  \cite[Lemma~4(4))]{miclo_patie}.

\subsection{The invariant distribution of the skip-free Laguerre semigroup}\hlabel{sss:inv} We now show that the invariant distribution $\meKp$ in Proposition~\ref{prop:int_laguerre}\eqref{it:invariant} is unique and provide several useful  representations. We recall that, for any $\phi\in\Be$, $W_\phi$ is the so-called Bernstein-gamma function which is defined as a solution to the following functional equation
\begin{align*}
	W_\phi(z+1)=&\phi(z)W_\phi(z) \ \forall z\in\bb{C}_+, \:
	W_\phi(1)=1.
\end{align*}
The above functional equation has a unique solution in the class of Mellin transforms of probability measures on $\RR_+$. For a detailed account of these functions, we refer to \cite{patie2018}.
\begin{proposition}\hlabel{prop:n_phi}
	\begin{enumerate}
		\item\hlabel{prop:mom_mu} For all $\phi\in\Be$, the invariant distribution $\meKp$ of $\eK^\phi$ is unique and is determined by its factorial moments
		\begin{equation} \hlabel{eq:disc_moment_nu}
			\meK_{\phi}\p_k=W_{\phi}(k+1)
		\end{equation}
		where $\p_k$ is defined in \eqref{eq:p_z}.
		\item \hlabel{it:contour_inv} For any $n\in \ZZ_+$ and $0<c<n+1+{\mathrm{d}}_{\phi},$
		\begin{equation}\hlabel{eq:cont_mu_n}
			\meK_{\phi}(n)=\frac{1}{n!}\frac{1}{2\pi \i} \int_{c-\i\infty}^{c+\i\infty}\Gamma(z)W_{\phi}(n-z+1)dz\end{equation}
		where $\mathrm{d}_{\phi}=\min\{u\geq 0; \: \phi(-u)=-\infty, \phi(-u)=0 \} \in [0,\infty]$.
		\item \hlabel{it:sum_inv} If $0\leq \sigma^2<1$, then, for any $n\in \ZZ_+$,
		\begin{align}\hlabel{eq:inv_disc}
			\meK_{\phi}(n)=\frac{1}{n!} \sum_{r=0}^{\infty}(-1)^r \frac{W_{\phi}(n+r+1)}{r!}.
		\end{align}
%
%
%

	\end{enumerate}
\end{proposition}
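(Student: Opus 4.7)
\textbf{Plan for the proof of Proposition~\ref{prop:n_phi}.}

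For Part~(1), my plan is to first compute the factorial moments directly using the representation $\meKp=\nu_\phi\Lambda$ from Proposition~\ref{prop:int_laguerre}\eqref{it:inv}, and then to extract uniqueness from irreducibility. Since every term is nonnegative, Tonelli's theorem permits swapping summation and integration to get
\begin{align*}
\meKp\p_k \;=\; \sum_{n\ge k}\p_k(n)\int_0^\infty e^{-x}\frac{x^n}{n!}\nu_\phi(x)dx \;=\; \int_0^\infty\!\bigg(\sum_{n\ge k}\frac{x^n}{(n-k)!}e^{-x}\bigg)\nu_\phi(x)dx.
\end{align*}
The bracketed sum equals $x^k$ (reindex $m=n-k$: this is the $k$-th factorial moment of $\mathrm{Pois}(x)$), while $\int_0^\infty x^k\nu_\phi(x)dx=W_\phi(k+1)$ is the defining moment identity of $\nu_\phi$ from \cite{Patie-Savov-GeL}. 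This yields \eqref{eq:disc_moment_nu}. For uniqueness, the nearest-neighbor rates $\eL_\phi(n,n+1)$ and $\eL_\phi(n,n-1)$ (for $n\ge 1$), read off from \eqref{eq:Lphi} and \eqref{eq:jump}, are strictly positive for every non-trivial $\phi\in\Be$, so the continuous-time chain is irreducible on $\ZZ_+$. The existence of the invariant probability $\meKp$ then forces positive recurrence, and classical Markov chain theory delivers uniqueness.

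For Part~(2), I would start from $\meKp(n)=\frac{1}{n!}\int_0^\infty e^{-x}x^n\nu_\phi(x)dx$ (Proposition~\ref{prop:int_laguerre}\eqref{it:inv}) and substitute the Mellin inversion
\begin{align*}
e^{-x}=\frac{1}{2\pi\i}\int_{c-\i\infty}^{c+\i\infty}\Gamma(z)x^{-z}dz,\qquad c>0,
\end{align*}
then swap the two integrations by Fubini. The inner integral $\int_0^\infty x^{n-z}\nu_\phi(x)dx$ is precisely the analytic continuation of the Mellin transform of $\nu_\phi$ established in Part~(1), equal to $W_\phi(n-z+1)$. The condition $c<n+1+\mathrm{d}_\phi$ guarantees $\re(n-z+1)>-\mathrm{d}_\phi$, i.e.\ that $W_\phi$ is holomorphic on the contour, while $c>0$ accommodates the poles of $\Gamma$.

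For Part~(3), I would expand $e^{-x}=\sum_{r\ge 0}(-x)^r/r!$ inside the integral and integrate term by term. The justification amounts to the absolute convergence of $\sum_{r\ge 0}W_\phi(n+r+1)/r!$. Using $\phi(u)\sim\sigma^2 u$ as $u\to\infty$ (read from \eqref{eq:bernstein_def}), one has $W_\phi(n+r+1)=\prod_{j=1}^{n+r}\phi(j)\sim C_n(\sigma^2)^{n+r}(n+r)!$ as $r\to\infty$, so that $W_\phi(n+r+1)/r!\sim C'_n(\sigma^2)^{r}r^n$; this is summable if and only if $\sigma^2<1$, matching the hypothesis and yielding \eqref{eq:inv_disc}.

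The main obstacle is the Fubini step in Part~(2): it requires quantitative decay of $|W_\phi(n+1-c-\i y)|$ on vertical lines which, combined with the Stirling bound $|\Gamma(c+\i y)|\sim\sqrt{2\pi}\,|y|^{c-1/2}e^{-\pi|y|/2}$, must make the double integral absolutely integrable. Such sharp vertical-line estimates on the Bernstein--gamma function are developed in \cite{patie2018} and would be imported here to legitimize the interchange and justify the contour formula on the full range $0<c<n+1+\mathrm{d}_\phi$.
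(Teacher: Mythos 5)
Your computation of the factorial moments in Part~(1) is correct and is essentially the paper's (the paper phrases it as $\meKp\p_k=\nu_\phi\Lambda\p_k=\nu_\phi p_k=W_\phi(k+1)$, using $\Lambda\p_k=p_k$, which is your Tonelli computation). Your uniqueness argument, via irreducibility and positive recurrence of the skip-free chain, is a genuinely different route from the paper's, which instead derives uniqueness from moment determinacy; yours is legitimate provided you record that the Feller chain of Theorem~\ref{feller_skip} is non-explosive and that $\eL_\phi(n,n+1)>0$ for every non-degenerate $\phi\in\Be$. However, there is a genuine omission: the statement also asserts that $\meKp$ \emph{is determined by its factorial moments}, i.e.\ moment determinacy, and your proposal never addresses this. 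The paper proves it by showing $\sum_{n}e^{an}\meKp(n)=\int_0^\infty e^{(e^a-1)x}\nu_\phi(x)dx=\sum_{r\ge0}W_\phi(r+1)\frac{(e^a-1)^r}{r!}<\infty$ for $0<a<\log(1+\sigma^{-2})$ (ratio test, since $\phi(r)/r\to\sigma^2$), and finiteness of an exponential moment yields determinacy. This is not cosmetic: moment determinacy is what the paper later uses for the density of polynomials in $\ell^2(\meKp)$ and in the biorthogonality arguments, so it must be supplied.

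For Part~(2) your Mellin-inversion-plus-Fubini route differs from the paper, which invokes Parseval's identity for the Mellin transform when $\sigma^2>0$ and a separate Cauchy-theorem/series argument when $\sigma^2=0$; your route handles both cases uniformly and is arguably cleaner. Moreover, the ``main obstacle'' you flag is not really one: the absolute integrand of the double integral factors as $\bigl(\int_{\R}|\Gamma(c+\i y)|dy\bigr)\cdot\int_0^\infty x^{n-c}\nu_\phi(x)dx$, so only Stirling's bound on the vertical line and the finiteness of the real Mellin transform $W_\phi(n-c+1)$ for $0<c<n+1+\mathrm{d}_\phi$ are needed — no vertical-line decay of $W_\phi$ enters. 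Part~(3) agrees with the paper (ratio of consecutive terms $\frac{W_\phi(n+r+2)\,r!}{W_\phi(n+r+1)\,(r+1)!}\to\sigma^2$); note only that your precise asymptotic $W_\phi(n+r+1)\sim C_n(\sigma^2)^{n+r}(n+r)!$ can fail when $\ovl{\ovl{\Pi}}(0)=\infty$, since $\prod_j\phi(j)/(\sigma^2 j)$ may diverge subexponentially, but the ratio-test conclusion and hence \eqref{eq:inv_disc} are unaffected.
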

Let us first derive the factorial moments of the skip-free Laguerre chains.
\begin{lemma}\hlabel{lem:timechange}
	For any $t\ge 0$ and $k\in\ZZ_+$,
	\begin{align}
		\mathds{K}^{\phi}_t\mathsf{p}_k(n)=\sum_{l=0}^k \dbinom{k}{l}\frac{W_{\phi}(k+1)}{W_{\phi}(l+1)}\mathsf{p}_l(n)e^{-tl}\left(1-e^{-t}\right)^{k-l}.
	\end{align}
\end{lemma}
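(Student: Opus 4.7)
The plan is to combine three ingredients that are either already established earlier or are short direct verifications: (i) the space--time relation from Theorem~\ref{thm:generation_lag}\eqref{it:P-K}, namely $\eK^\phi_t=\eQ^\phi_{e^t-1}\dD_{e^{-t}}$; (ii) the fact that the falling factorial $\mathsf{p}_k$ is an eigenfunction of the discrete dilation operator with eigenvalue $\alpha^k$; and (iii) the explicit formula for $\eQ^\phi_s\mathsf{p}_k$ obtained in Theorem~\ref{thm:moments}.

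Concretely, I would first verify, or invoke from the remark following Theorem~\ref{feller_skip}, the identity
\[
\dD_\alpha \mathsf{p}_k(n)=\alpha^k\mathsf{p}_k(n),\qquad \alpha>0,\ k,n\in\ZZ_+.
\]
This is a one-line combinatorial check: using $\binom{n}{r}\mathsf{p}_k(r)=\mathsf{p}_k(n)\binom{n-k}{r-k}$ for $r\geq k$, the defining sum of $\dD_\alpha \mathsf{p}_k(n)$ reduces to $\alpha^k\mathsf{p}_k(n)\sum_{s=0}^{n-k}\binom{n-k}{s}\alpha^s(1-\alpha)^{n-k-s}=\alpha^k\mathsf{p}_k(n)$.

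Next, I would apply Theorem~\ref{thm:generation_lag}\eqref{it:P-K} to $\mathsf{p}_k$ and use the eigenfunction property to obtain
\[
\eK^\phi_t\mathsf{p}_k(n)=\eQ^\phi_{e^t-1}\dD_{e^{-t}}\mathsf{p}_k(n)=e^{-tk}\,\eQ^\phi_{e^t-1}\mathsf{p}_k(n).
\]
Plugging in the moment formula from Theorem~\ref{thm:moments} at time $s=e^t-1$ yields
\[
\eK^\phi_t\mathsf{p}_k(n)=\sum_{l=0}^{k}\binom{k}{l}\frac{W_\phi(k+1)}{W_\phi(l+1)}\mathsf{p}_l(n)\,e^{-tk}(e^t-1)^{k-l}.
\]
The elementary algebraic identity
\[
e^{-tk}(e^t-1)^{k-l}=e^{-tl}(1-e^{-t})^{k-l},
\]
obtained by factoring $e^{t(k-l)}$ inside the $(k-l)$-th power, converts the right-hand side into the stated expression, which proves the lemma.

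The only point requiring some care, and likely the main technical obstacle, is that the gateway identity \eqref{eq:P_K} in Theorem~\ref{thm:generation_lag}\eqref{it:P-K} was stated for $\mathsf{f}\in\mathbf{C}_0(\ZZ_+)$, whereas $\mathsf{p}_k$ is an unbounded polynomial on $\ZZ_+$. I would justify the extension in one of two equivalent ways. Either probabilistically, by approximating $\mathsf{p}_k$ with the truncations $\mathsf{p}_k\un_{\{\cdot\leq N\}}\in\mathbf{C}_c(\ZZ_+)$ and passing to the limit using the uniform integrability of $\mathsf{p}_k(\dX_\phi(t,n))$ supplied by the moment bound of Theorem~\ref{thm:moments}; or analytically, by observing that $\Lambda\mathsf{p}_k(x)=x^k$ so that, by Corollary~\ref{cor:non-neg-gateway} and the continuous self-similarity relation $K^\phi_t=Q^\phi_{e^t-1}d_{e^{-t}}$, both sides of \eqref{eq:P_K} applied to $\mathsf{p}_k$ have the same image under the injective operator $\Lambda$, forcing equality at the level of $\ZZ_+$-functions.
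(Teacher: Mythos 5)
Your proof is correct, and it reaches the identity by a route that is parallel to, but not identical with, the paper's. The paper does all the computation on the continuous side: it writes $K^{\phi}_t p_k$ as the image of $p_k(x)=x^k$ under the continuous space--time change of $Q^{\phi}$, evaluates this with the Bertoin--Yor moment formula, and only then transfers to $\ZZ_+$ by applying $\nabla=\Lambda^{-1}$ together with $\nabla p_l=\mathsf{p}_l$. You instead stay entirely on $\ZZ_+$, combining the discrete space--time identity \eqref{eq:P_K}, the eigenrelation $\dD_{e^{-t}}\mathsf{p}_k=e^{-tk}\mathsf{p}_k$, and the discrete moment formula of Theorem~\ref{thm:moments}; since the latter two are themselves gateway images of the continuous facts, the two arguments use the same ingredients and differ only in where the transfer across $\Lambda$ is performed. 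The trade-off is exactly the point you flag: your route needs \eqref{eq:P_K} to hold for the unbounded function $\mathsf{p}_k$, while the paper's route needs the gateway relation on polynomials, and neither is covered verbatim by the $\mathbf{C}_0$ statements; your two proposed fixes (monotone truncation, which works since $\mathsf{p}_k\ge 0$ on $\ZZ_+$ and $\dD_{e^{-t}}$ is a Markov kernel, or injectivity of $\Lambda$ combined with the nonnegative-function extension of the gateway as in Corollary~\ref{cor:non-neg-gateway}) both close this gap cleanly, and arguably more explicitly than the paper does. The concluding algebraic step $e^{-tk}(e^t-1)^{k-l}=e^{-tl}(1-e^{-t})^{k-l}$ is the same in both proofs, and your form of the space--time change, $\eK^{\phi}_t=\eQ^{\phi}_{e^t-1}\dD_{e^{-t}}$, is the one consistent with the final formula (the time parameter written at the start of the paper's proof of this lemma appears to be a typo).
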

\begin{proof}
	Let us recall that $Q^\phi$ denote the spectrally negative self-similar semigroup associated to $\phi$, and, for any $t\ge 0$, $x>0$ and $f\ge 0$,
	\begin{align*}
		K^{\phi}_t f(x)=Q^\phi_{1-e^{-t}}d_{e^{-t}}f(x),
	\end{align*}
and,	writing $p_k(x)=x^k$, $x>0$,  we have, from \cite{bertoin_yor}, that, for all $k\in\ZZ_+$,
	\begin{align*}
		Q^\phi_t p_k(x)=\sum_{l=0}^k\dbinom{k}{l}\frac{W_\phi(k+1)}{W_{\phi}(l+1)}x^l t^{k-l}.
	\end{align*}
	Therefore,
	\begin{align*}
		K^{\phi}_t p_k(x)=\sum_{l=0}^k\dbinom{k}{l}\frac{W_\phi(k+1)}{W_{\phi}(l+1)}e^{-tl}\left(1-e^{-t}\right)^{k-l}p_l(x).
	\end{align*}
	Recalling that $\nabla=\Lambda^{-1}$ and $\nabla p_l=\mathsf{p}_l$ for all $l\in\ZZ_+$, it follows that
	\begin{align*}
		\mathds{K}^{\phi}_t\mathsf{p}_k(n)=\mathds{K}^{\phi}_t\nabla p_k(n)=&\nabla K^{\phi}_t p_k(n) \\
		=&\sum_{l=0}^k\dbinom{k}{l}\frac{W_\phi(k+1)}{W_{\phi}(l+1)}e^{-tl}\left(1-e^{-t}\right)^{k-l}\nabla p_l(n)\\
		=&\sum_{l=0}^k\dbinom{k}{l}\frac{W_\phi(k+1)}{W_{\phi}(l+1)}e^{-tl}\left(1-e^{-t}\right)^{k-l}\mathsf{p}_l(n)
	\end{align*}
which completes the proof.
\end{proof}

\subsection{Proof of Proposition~\ref{prop:n_phi}}
From Lemma~\ref{lem:timechange}, we observe that, for all $k,n\in\ZZ_+$,
\[\lim_{t\to\infty}\eK^\phi_t\p_k(n)=W_\phi(k+1).\] On the other hand, recalling that $\Lambda\p_k=p_k$ where $p_k(x)=x^k$ and $\nu_\phi p_k=W_\phi(k+1)$, see \cite[Proposition~2.6(1)]{Patie-Savov-GeL}, we get
\[\meKp\p_k=\nu_\phi\Lambda\p_k=\nu_\phi p_k=W_\phi(k+1).\] Now it remains to show that $\meKp$ is determined by its moments. Let us write $\mf{e}_a(n)=e^{an}$. Then, applying Tonelli theorem we get
	\[\mf{e}_a\meKp=\sum_{n=0}^{\infty} e^{an}\meKp(n)=\int_{0}^{\infty}e^{-x}\sum_{n=0}^{\infty} \frac{(e^{a}x)^{n}}{n!} \nu_{\phi}(x)dx=\int_{0}^{\infty}e^{(e^{a}-1)x} \nu_{\phi}(x)dx.\]
	Next, we have
	\[\int_{0}^{\infty}e^{(e^{a}-1)x} \nu_{\phi}(x)dx=\sum_{r=0}^{\infty} W_{\phi}(r+1) \frac{(e^{a}-1)^r}{r!}\]
	where we used the fact that $\int_0^{\phi(\infty)} x^n\nu_\phi(x)dx=W_\phi(n+1)$ , see \cite[Proposition~2.6(1)]{Patie-Savov-GeL}, and thus $\mathfrak e_{a}\meKp <\infty$ as soon as $(e^{a}-1)<\sigma^{-2}$, that is for at least any $0<a<\log(1+\sigma^{-2})$. This provides the moment determinacy of $\meKp$. Next, to prove \eqref{it:contour_inv} and \eqref{it:sum_inv}, we observe,  from Proposition~\ref{prop:inter-bdsf}\eqref{it:inv}, that for all $n\in\ZZ_+$,
\[\meKp(n)=\int_0^{\phi(\infty)}e^{-x}x^n\nu_\phi(x)dx.\]
 Expanding the exponential function in the identity above and using a classical Fubini argument, see e.g.~\cite[Section 1.77]{Titchmarsh39}, combined with the expression \eqref{eq:disc_moment_nu} of the moment of $ \nu_{\phi}$, we get
\begin{equation} \hlabel{eq:seriesmun}
	\meKp(n) = \frac{1}{n!}\int_{0}^{\infty} e^{-x}x^n   \nu_{\phi}(x)dx  =\frac{1}{n!} \sum_{r=0}^{\infty} W_{\phi}(n+r+1) \frac{(-1)^r}{r!}
\end{equation}
where the series  is absolutely convergent as soon as $$\lim_{k\to \infty}\frac{\phi(k+n)}{k}=\lim_{k\to \infty}\frac{\phi(k)}{k}=\sigma^2<1.$$
To justify the contour integral representation in \eqref{eq:cont_mu_n}, we consider two cases. Assume first that $\sigma^2>0$ and we recall that for  large $|\im(z)|$,
\begin{align}
  |\Gamma(z)| &\sim  C_{\re(z)}\: |\im(z)|^{\re(z)-\frac{1}{2}}e^{-\frac{\pi}{2}|\im(z)|}, \: \re(z)>0,  \hlabel{eq:stirling} \\
 \left|W_{\phi}(n-z+1)\right|&\leq C_{n-\re(z)} e^{-\frac{\pi}{2}|\im(z)|},\: \re(z)<n+1+{\mathrm{d}}_{\phi}, \nonumber
\end{align}
where here and below $ C_{\re(z)}>0$ is a constant depending only on ${\re(z)}>0$. Note that the first estimate is the classical Stirling formula, see e.g.~\cite[(2.1.8)]{Paris01}, whereas the second bound follows from \cite[Theorem~6.2(2b)]{Patie-Savov-GeL}.  Therefore, the mappings $z\mapsto \Gamma(z)$   and $z\mapsto W_{\phi}(n-z+1)$ are both in $\bmrm{L}(\R)$ and holomorphic in the strip $0<\re(z)<n+1+{\mathrm{d}}_{\phi}$, see \cite[Theorem~6.1(2)]{Patie-Savov-GeL}.  Moreover $z\mapsto W_{\phi}(n+z)$ and $z\mapsto \Gamma(z)$ are the Mellin transform of $ x\mapsto x^n \nu_{\phi}(x)$, see \cite{Patie-Savov-GeL},  and $x\mapsto e^{-x}$, respectively.  Consequently, both of these functions are in $\mathbf L^2(\R_+)$. An application of Parseval identity for the Mellin transform yields
\[\frac{1}{2\pi \i} \int_{c-\i\infty}^{c+\i\infty}\Gamma(z)W_{\phi}(n-z+1)dz=\int_{0}^{\infty} e^{-x}x^n   \nu_{\phi}(x)dx,\]
  from where we easily derive the expression \eqref{eq:cont_mu_n} for $\sigma^2>0$.
Next, we consider the other case, that is $\sigma^2=0$, which ensures that  the series representation \eqref{eq:seriesmun} of $\meKp(n)$ is valid for all $n\in\ZZ_+$.
Then, using the facts that the mappings $z\mapsto \Gamma(z)$   and $z\mapsto W_{\phi}(n-z+1)$ are both holomorphic in the strip $0<\re(z)<n+1+{\mathrm{d}}_{\phi}$ and within this strip, \eqref{eq:stirling} still  holds and
\[ \left|W_{\phi}(n-z+1)\right|\leq C_{(n-\re(z))}.\]
This implies that for all $n\in \N$, the  integral \eqref{eq:cont_mu_n} is absolutely convergent and an application of Cauchy theorem, see \cite{Paris01} for the detailed computation,  yields that the contour integral can be expanded  as  follows
\[\frac{1}{2\pi \i} \int_{c-\i\infty}^{c+\i\infty}\Gamma(z)W_{\phi}(n-z+1)dz=\sum_{r=0}^{\infty} W_{\phi}(n+r+1) \frac{(-1)^r}{r!},\]
which completes the proof of \eqref{eq:inv_disc}. \qed

\subsection{Intertwining between skip-free Laguerre semigroups} It has been shown in \cite[Theorem~2.1]{Patie-Savov-GeL} that for any $\phi\in\Be$, the generalized (non self-adjoint) Laguerre semigroup $K^{\phi}$ is intertwined with the diffusive (self-adjoint) Laguerre semigroup $K$ (when $\phi(u)=u$) and the intertwining operator is a multiplicative Markov kernel corresponding to the exponential functional of the subordinator associated with $\phi$. An analogous result holds for the skip-free Laguerre semigroups as well (see Theorem~\ref{prop:inter-bdsf} below), although, we prove it under the assumption that $\sigma^2$ in \eqref{eq:bernstein_def} is  positive. The following proposition describes the intertwining operator $\Mphi$ that links the semigroups corresponding to skip-free (non-reversible) and the reversible Laguerre chains respectively. Let $\poly=\Span\left\{\p_k,~k\in\ZZ_+\right\}$ and for any $\phi \in \mathbf B$ associated with the triplet $(m,\sigma^2,\Pi)$, let $\Mphi : \mathcal P \mapsto \cP$ be defined by
\begin{align}\hlabel{eq:binomial}
	\Mphi  \mathsf{f}(n)=\E\left[\mathsf{f}(\ttt{B}(I_{\sigma_1},n))\right]=\sum_{r=0}^{n}\mathsf{f}(r){n\choose r}\E\left[I_{\sigma_1}^r(1-I_{\sigma_1})^{n-r}\right]
\end{align}
with $\E\left[I_{\sigma_1}^k\right]=\frac{\sigma_1^{k}k!}{W_{\phi}(k+1)}$ for all $k\in\ZZ_+$, where $\sigma_1$ is defined in \eqref{eq:tau} and   we recall that $W_{\phi}(k+1)=\prod_{r=1}^{k}\phi(r),~W_{\phi}(1)=1$.

\begin{theorem}\hlabel{prop:inter-bdsf}
	\begin{enumerate}
		\item \hlabel{it:inter-bdsf} For any $\phi \in \mathbf B$, we have the  intertwining relation on $\poly$
		\begin{eqnarray} \hlabel{eq:inter_bd_sf}
			\eK^\phi_t\Mphi=\Mphi\eK^{\sigma_1}_{t} 
		\end{eqnarray}
		where $\eK^{\sigma_1}=\eK^{\phi}$ with $\phi(u)=\sigma_1 u$.
		\item \hlabel{it:Iphi_bdd} Moreover, if $\sigma^2>0$, then $\Mphi : \ell^2(\meK_{\sigma^2}) \mapsto \ell^2(\meKp)$ is a linear operator that is bounded, injective  with a  dense range and for all $\mathsf{f} \in \ell^2(\meK_{\sigma^2})$,
		\begin{equation}\hlabel{eq:bounded}
			\|\Mphi \mathsf{f}\|_{\ell^2(\meKp)}\leq \|\mathsf{f}\|_{\ell^2(\meK_{\sigma^2})}
		\end{equation}
		where $\meK_{\sigma^2}$ is the unique invariant distribution of $\eK^{\sigma^2}$, and, for all $t\ge 0$, $\f\in\ell^2(\meK_{\sigma^2})$,
		\begin{align}\hlabel{eq:inter-bdsf1}
			\eK^\phi_t\Mphi\f=\Mphi\eK^{\sigma^2}_{t}\f.
		\end{align}
	\end{enumerate}
\end{theorem}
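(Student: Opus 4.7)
My plan rests on the observation that $\Mphi$ is diagonalized by the factorial basis $\{\p_k\}_{k\ge 0}$. Since the conditional factorial moment of a binomial satisfies $\E[\p_k(\ttt B(n,p))\mid p]=p^k \p_k(n)$, conditioning on $I_{\sigma_1}$ yields
\[\Mphi\p_k(n)=\E[I_{\sigma_1}^k]\,\p_k(n)=\frac{\sigma_1^k k!}{W_\phi(k+1)}\,\p_k(n).\]
Before exploiting this I would check, when $\sigma^2>0$, that $\sigma_1^k k!/W_\phi(k+1)$ is genuinely the moment sequence of a $[0,1]$-valued random variable: the formula of Patie--Savov gives $\E[I_\phi^k]=k!/W_\phi(k+1)$ for the exponential functional $I_\phi=\int_0^\infty e^{-\xi_s}ds$ of the subordinator of Laplace exponent $\phi$, and the drift bound $\xi_s\ge \sigma^2 s$ forces $I_\phi\le 1/\sigma^2$ almost surely, so $I_{\sigma_1}:=\sigma_1 I_\phi\in[0,1]$. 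For item \eqref{it:inter-bdsf}, $\Mphi$ can in any case be defined unambiguously on $\poly$ through the diagonal formula above.

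The intertwining \eqref{eq:inter_bd_sf} on $\poly$ is then a direct computation: applying Lemma~\ref{lem:timechange} both to $\phi$ and to $\phi_{\mathrm{ref}}(u)=\sigma_1 u$ (for which $W_{\phi_{\mathrm{ref}}}(l+1)=\sigma_1^l l!$), one finds that $\eK^\phi_t\Mphi\p_k$ and $\Mphi\eK^{\sigma_1}_t\p_k$ both collapse to
\[\sum_{l=0}^k \binom{k}{l}\frac{\sigma_1^k k!}{W_\phi(l+1)}\,e^{-tl}(1-e^{-t})^{k-l}\,\p_l(n).\]

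For item \eqref{it:Iphi_bdd}, the pivotal identity is $\meKp\,\Mphi=\meK_{\sigma_1}$. I would verify it on each $\p_k$,
\[(\meKp\,\Mphi)(\p_k)=\frac{\sigma_1^k k!}{W_\phi(k+1)}\,W_\phi(k+1)=\sigma_1^k k!=W_{\phi_{\mathrm{ref}}}(k+1)=\meK_{\sigma_1}(\p_k),\]
and then lift this to the measure level using the moment determinacy of $\meK_{\sigma_1}$ from Proposition~\ref{prop:n_phi}\eqref{prop:mom_mu}. Jensen's inequality combined with this identity gives, for $f\ge 0$, $\|\Mphi f\|^2_{\ell^2(\meKp)}\le \meKp(\Mphi(f^2))=\meK_{\sigma_1}(f^2)=\|f\|^2_{\ell^2(\meK_{\sigma_1})}$, whence \eqref{eq:bounded} for general $f$ by positive/negative decomposition. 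Since $\meK_{\sigma_1}$ is moment determinate (Theorem~\ref{thm:generation_lag}\eqref{it:invariant}), $\poly$ is dense in $\ell^2(\meK_{\sigma_1})$, so $\Mphi$ extends uniquely to a bounded operator $\ell^2(\meK_{\sigma_1})\to\ell^2(\meKp)$ and, by $\ell^2$-continuity of the semigroups, the identity \eqref{eq:inter_bd_sf} upgrades to \eqref{eq:inter-bdsf1}.

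Injectivity is a short induction on the lower-triangular representation $\Mphi f(n)=\sum_{r\le n}\binom{n}{r}\E[I_{\sigma_1}^r(1-I_{\sigma_1})^{n-r}]f(r)$, whose diagonal coefficient $\E[I_{\sigma_1}^n]$ is strictly positive (from $\E[I_{\sigma_1}]=\sigma_1/\phi(1)>0$, $I_{\sigma_1}$ is not degenerate at $0$). Dense range is immediate because $\p_k=\frac{W_\phi(k+1)}{\sigma_1^k k!}\Mphi\p_k\in\Range(\Mphi)$ and $\poly$ is dense in $\ell^2(\meKp)$. The step I expect to be the most delicate is the lift of $(\meKp\,\Mphi)(\p_k)=\meK_{\sigma_1}(\p_k)$ from the moment level to the measure equality $\meKp\,\Mphi=\meK_{\sigma_1}$, since boundedness, extension to $\ell^2$, and propagation of the intertwining all rest on it; the justification turns entirely on the moment determinacy of $\meK_{\sigma_1}$ supplied by Proposition~\ref{prop:n_phi}\eqref{prop:mom_mu}.
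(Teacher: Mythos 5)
Your argument follows the paper's proof essentially step for step: the diagonalization $\Mphi\p_k=\frac{\sigma_1^k k!}{W_\phi(k+1)}\p_k$ (Lemma~\ref{lem:Mphi}), the verification of \eqref{eq:inter_bd_sf} on each $\p_k$ via Lemma~\ref{lem:timechange}, the identity $\meKp\Mphi=\meK_{\sigma^2}$ lifted from the factorial moments by moment determinacy, the Jensen contraction bound, and the density of $\poly$ in $\ell^2(\meK_{\sigma^2})$ to extend the intertwining; your explicit triangular induction for injectivity and the drift argument for $I_\phi\le\sigma^{-2}$ are details the paper leaves implicit (it cites \cite[Proposition 6.7]{Patie-Savov-GeL} for the latter). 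One small repair: the ``positive/negative decomposition'' is both unnecessary and insufficient for the constant $1$ in \eqref{eq:bounded}, since $\|\f_+\|_{\ell^2(\meK_{\sigma^2})}+\|\f_-\|_{\ell^2(\meK_{\sigma^2})}$ can be as large as $\sqrt{2}\,\|\f\|_{\ell^2(\meK_{\sigma^2})}$; instead apply Jensen directly to signed $\f$, as $(\Mphi\f)^2\le\Mphi(\f^2)$ holds for every real-valued $\f$ because $x\mapsto x^2$ is convex on all of $\R$ and $\Mphi$ is a probability kernel.
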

As a consequence of the above theorem, we obtain the intertwining relationship among the class of discrete self-similar Markov semigroups.
\begin{corollary}
	For $\phi\in\Be$ with $\sigma^2>0$, we have
	\begin{align*}
		\eQ^\phi_t\Mphi=\Mphi\eQ_{\sigma^2 t}
	\end{align*}
	both on $\esC_0(\ZZ_+)$ and $\ell^2(\ZZ_+)$, where we recall that $\eQ^\phi$ (resp.~$\eQ$) is the discrete self-similar semigroup corresponding to the Bernstein function $\phi$ (resp.~ $\phi(u)=u)$.
\end{corollary}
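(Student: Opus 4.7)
My approach is to verify the intertwining directly on the factorial-monomial basis $\{\p_k\}_{k\in\ZZ_+}$ of $\poly$ using Theorem~\ref{thm:moments} together with the diagonal action of $\Mphi$ on $\poly$, and then to upgrade the identity to $\esC_0(\ZZ_+)$ and $\ell^2(\ZZ_+)$ by a moment-determinacy argument. This route avoids a manipulation via the space-time transformation of Theorem~\ref{thm:generation_lag}\eqref{it:P-K} combined with Theorem~\ref{prop:inter-bdsf}, which would force us to invert the dilation $\dD_{e^{-t}}$, a step legitimate only on $\poly$ since $\dD_{e^t}$ is unbounded on $\esC_0(\ZZ_+)$.

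For the first step, the classical binomial identity $\E[\p_k(\ttt B(p,n))]=p^k\p_k(n)$ combined with \eqref{eq:binomial} gives $\Mphi\p_k = \E[I_{\sigma_1}^k]\p_k = \frac{\sigma_1^{k}k!}{W_\phi(k+1)}\p_k$, so $\Mphi$ is diagonal on $\poly$. Applying Theorem~\ref{thm:moments} both to $\phi$ and to the Bernstein function $u\mapsto u$ (whose Bernstein-gamma function is $W_{\mathrm{id}}(l+1)=l!$, so that $\eQ_{\sigma^2 t}\p_k=\sum_{l=0}^k\binom{k}{l}\frac{k!}{l!}(\sigma^2 t)^{k-l}\p_l$), a short rearrangement shows that both $\eQ^\phi_t\Mphi\p_k$ and $\Mphi\eQ_{\sigma^2 t}\p_k$ collapse to the common expression
\begin{equation*}
\sigma^{2k}k!\sum_{l=0}^{k}\binom{k}{l}\frac{t^{k-l}}{W_\phi(l+1)}\p_l,
\end{equation*}
which proves the claimed identity on $\poly$.

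For the extension, observe first that $\phi(r)\ge \sigma^2 r$ for every $r\ge 1$ yields $W_\phi(k+1)\ge \sigma^{2k}k!$, so $\E[I_{\sigma_1}^k]\le 1$ for every $k$; hence $I_{\sigma_1}\in[0,1]$ a.s.\ and $\Mphi$ is a bona fide Markov kernel. Therefore, for each $n\in\ZZ_+$, the measures $\mu_n^{(1)}:=(\eQ^\phi_t\Mphi)(n,\cdot)$ and $\mu_n^{(2)}:=(\Mphi\eQ_{\sigma^2 t})(n,\cdot)$ are probability laws on $\ZZ_+$ sharing all factorial moments by the previous step. Multiplying the common expression by $u^k/k!$ and swapping the resulting double sum produces the closed form
\begin{equation*}
\frac{1}{1-\sigma^2 u t}\sum_{l\ge 0}\frac{\p_l(n)}{W_\phi(l+1)}\left(\frac{\sigma^2 u}{1-\sigma^2 u t}\right)^{l},
\end{equation*}
which, thanks to $W_\phi(l+1)\ge \sigma^{2l}l!$ and $\p_l(n)\le n^l$, is analytic in $u$ on $\{|u|<1/(\sigma^2 t)\}$. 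Hence the probability generating functions of $\mu_n^{(1)}$ and $\mu_n^{(2)}$ are analytic in a neighborhood of~$1$, forcing the two laws to be moment-determinate and thus equal, which gives the identity on $\esC_0(\ZZ_+)$.

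The final extension to $\ell^2(\ZZ_+)$ is then routine: $\esC_c(\ZZ_+)$ is dense in $\ell^2(\ZZ_+)$ and every operator involved is a composition of Markov kernels, hence a contraction on $\ell^\infty$ and on $\ell^1$, and bounded on $\ell^2$ by Riesz--Thorin interpolation. The principal obstacle is the moment-determinacy step, since $\poly$ is \emph{not} dense in $\esC_0(\ZZ_+)$ nor in $\ell^2(\ZZ_+)$; what rescues the argument is the explicit rational closed form above, whose analyticity past~$1$ uniquely identifies the underlying distributions on $\ZZ_+$.
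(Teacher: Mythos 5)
Your core argument is correct and is genuinely different from the route the paper intends. The paper gives no proof of this corollary: it is presented as a consequence of Theorem~\ref{prop:inter-bdsf}, the implicit derivation being to combine $\eK^\phi_t=\eQ^\phi_{e^t-1}\dD_{e^{-t}}$ from Theorem~\ref{thm:generation_lag}\eqref{it:P-K} with $\eK^\phi_t\Mphi=\Mphi\eK^{\sigma^2}_t$, commute $\Mphi$ with $\dD_{e^{-t}}$, and cancel $\dD_{e^{-t}}$. As you observe, that cancellation is only clean on $\poly$, since $\dD_{e^{t}}$ does not preserve $\esC_0(\ZZ_+)$ and Theorem~\ref{prop:inter-bdsf} is only established on $\poly$ and $\ell^2(\meK_{\sigma^2})$. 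Your substitute — checking equality of all factorial moments of the two probability kernels $(\eQ^\phi_t\Mphi)(n,\cdot)$ and $(\Mphi\eQ_{\sigma^2 t})(n,\cdot)$ via Theorem~\ref{thm:moments} and Lemma~\ref{lem:Mphi}, then noting that the factorial moment generating function is a rational function of $u$ (the $l$-sum terminates at $l=n$) whose only pole is at $u=1/(\sigma^2 t)$, so the probability generating functions converge beyond $1$ and the laws are moment determinate — is sound and yields the identity on $\esC_0(\ZZ_+)$, indeed on all bounded sequences. The algebra producing the common expression $\sigma^{2k}k!\sum_{l=0}^{k}\binom{k}{l}t^{k-l}W_\phi(l+1)^{-1}\p_l$ checks out.

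The gap is in the final $\ell^2(\ZZ_+)$ step. You claim every operator involved is a contraction on $\ell^1(\ZZ_+)$ because it is a Markov kernel, and then invoke Riesz--Thorin. This fails for $\Mphi$: a Markov kernel is an $\ell^1$ contraction for the counting measure only if the counting measure is excessive for it, and here $\sum_{n\ge r}\Mphi(n,r)=\E\bigl[I_{\sigma_1}^{r}\sum_{n\ge r}\binom{n}{r}(1-I_{\sigma_1})^{n-r}\bigr]=\E\bigl[I_{\sigma_1}^{-1}\bigr]$, which equals $\sigma^{-2}\lim_{s\downarrow 0}\phi(s)/s$ and is $+\infty$ whenever $m=\phi(0)>0$. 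So $\Mphi$ need not even be bounded on $\ell^1(\ZZ_+)$, and the interpolation argument for the composite collapses. To repair it, note first that once the kernels coincide, the identity holds pointwise for every bounded $\f$, hence for every $\f\in\ell^2(\ZZ_+)\subset\ell^\infty(\ZZ_+)$; what remains is only the $\ell^2(\ZZ_+)$-boundedness of $\Mphi$ itself (that of $\eQ^\phi_t$ and $\eQ_{\sigma^2 t}$ is Proposition~\ref{thm:gateway}\eqref{it:2}). For this, apply your interpolation to each $\dD_\alpha$ separately: $\sum_{n}\dD_\alpha(n,r)=\alpha^{-1}$ gives $\|\dD_\alpha\|_{\ell^1\to\ell^1}=\alpha^{-1}$ and $\|\dD_\alpha\|_{\ell^\infty\to\ell^\infty}=1$, hence $\|\dD_\alpha\|_{\ell^2\to\ell^2}\le\alpha^{-1/2}$, so that $\|\Mphi\|_{\ell^2(\ZZ_+)\to\ell^2(\ZZ_+)}\le\E\bigl[I_{\sigma_1}^{-1/2}\bigr]=2\sigma^{-2}\phi(1/2)\,\E\bigl[I_{\sigma_1}^{1/2}\bigr]<\infty$, the last identity coming from the functional equation $\E[I_{\phi}^{s-1}]=\frac{\phi(s)}{s}\E[I_{\phi}^{s}]$ for the exponential functional. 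With that substitution the proof is complete.
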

We need the following lemma to prove the above theorem.
\begin{lemma} \hlabel{lem:Mphi} Recall the definition of $\p_z$ in  \eqref{eq:p_z}. Then, for all $k,n\in\ZZ_+$, we have
 \begin{eqnarray*}
\Mphi \mathsf{p}_k(n)&=&\frac{{\sigma}^k_1k!}{W_{\phi}(k+1)}\mathsf{p}_k(n)
\end{eqnarray*}
where $\sigma_1$ is defined in \eqref{eq:tau}.
\end{lemma}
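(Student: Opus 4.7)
The plan is to exploit the classical identity that the falling factorial $\mathsf{p}_k$ diagonalizes the binomial factorial moments. Specifically, if $\ttt{B}(n,p)$ denotes a binomial random variable with parameters $n$ and $p \in [0,1]$, then
\[\mathbb{E}[\mathsf{p}_k(\ttt{B}(n,p))] = \mathsf{p}_k(n)\, p^k,\]
which can be verified directly by writing out $\mathsf{p}_k(r) = \frac{r!}{(r-k)!} \mathbbm{1}_{\{r \geq k\}}$ and reindexing the sum $\sum_{r=k}^n \binom{n}{r}\mathsf{p}_k(r) p^r (1-p)^{n-r} = \mathsf{p}_k(n) p^k \sum_{j=0}^{n-k} \binom{n-k}{j} p^j (1-p)^{n-k-j}$.

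Once this identity is established, the rest is a one-line computation. Conditioning on $I_{\sigma_1}$ inside the definition \eqref{eq:binomial} of $\Mphi$ yields
\[\Mphi \mathsf{p}_k(n) = \mathbb{E}\!\left[\mathbb{E}\!\left[\mathsf{p}_k(\ttt{B}(I_{\sigma_1},n)) \mid I_{\sigma_1}\right]\right] = \mathsf{p}_k(n)\, \mathbb{E}[I_{\sigma_1}^k],\]
and substituting the given moment formula $\mathbb{E}[I_{\sigma_1}^k] = \frac{\sigma_1^k k!}{W_\phi(k+1)}$ gives the claim.

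There is no real obstacle here; the statement is essentially a bookkeeping consequence of the two building blocks (the classical binomial factorial moment identity and the prescribed moments of $I_{\sigma_1}$). The only minor care needed is handling the case $k > n$, where $\mathsf{p}_k(n) = 0$ and the sum defining $\Mphi \mathsf{p}_k(n)$ truncates at $r \leq n < k$ so that each term vanishes because $\mathsf{p}_k(r) = 0$; both sides are therefore zero and the identity still holds trivially.
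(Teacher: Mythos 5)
Your proposal is correct and follows essentially the same route as the paper: the paper also writes $\Mphi\p_k(n)=\bb{E}[\p_k(\ttt{B}(n,I_{\sigma_1}))]=\bb{E}[\dD_{I_{\sigma_1}}\p_k(n)]=\p_k(n)\bb{E}[I_{\sigma_1}^k]$, using the eigenfunction identity $\dD_\alpha\p_k(n)=\alpha^k\p_k(n)$, which is exactly the binomial factorial-moment identity you verify by reindexing. Your explicit treatment of the case $k>n$ is a harmless extra check that the paper leaves implicit.
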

\begin{proof}
Recalling the definition of $\Mphi$ in $\eqref{eq:binomial}$, we have, for all $k,n\in\ZZ_+$,
\begin{align}
	\Mphi\p_k(r)=&\sum_{r=0}^n\p_k(r)\dbinom{n}{r}\bb{E}\left[I^r_{\sigma_1}(1-I_{\sigma_1})^{n-r}\right]=\bb{E}\left[\p_k(\mathtt{B}(n,I_{\sigma_1}))\right]
\end{align}
where $\mathtt{B}$ denotes the Binomial random variable with the parameters written in the parentheses and the moments of $I_{\sigma_1}$ are given in \eqref{eq:binomial}. Also, invoking the definition of the discrete dilation operator $\dD$, we can write the above quantity as
\[\bb{E}\left[\p_k(\mathtt{B}(n,I_\rs))\right]=\bb{E}[\dD_{I_\rs}\p_k(n)]=\p_k(n)\bb{E}[I^k_\rs]=\frac{\sigma^k_1k!}{W_\phi(k+1)}\p_k(n)\]
where the last equality follows from \eqref{eq:binomial}. This proves the lemma.
\end{proof}

\subsection{Proof of Theorem~\ref{prop:inter-bdsf}} From Lemma~\ref{lem:timechange} and Lemma~\ref{lem:Mphi}, we have, for all $t\ge 0$ and $k\in\mathbb N$,
\begin{align*}
	\eK^\phi_t\Mphi\mathsf{p}_k=&\frac{\sigma^k_1k!}{W_{\phi}(k+1)}\eK^\phi_t\mathsf{p}_k \\
	=&\frac{\sigma^k_1k!}{W_\phi(k+1)}\sum_{l=0}^ke^{-tl}(1-e^{-t})^{k-l}\dbinom{k}{l}\frac{W_{\phi}(k+1)}{W_{\phi}(l+1)}\mathsf{p}_l \\
	=&\sigma^k_1k!\sum_{l=0}^ke^{-tl}(1-e^{-t})^{k-l}\dbinom{k}{l}\frac{1}{W_{\phi}(l+1)}\mathsf{p}_l(n).
\end{align*}
On the other hand, recalling that $\eK^{{\rs}}=\eK^{\phi}$ with $\phi(u)=\rs u$, we have
\begin{align*}
	\Mphi\eK^{{\rs}}_{t}\mathsf{p}_k=&\sum_{l=0}^ke^{-tl}(1-e^{-t})^{k-l}\sigma^{k-l}_1\dbinom{k}{l}\frac{k!}{l!}\Mphi\mathsf{p}_l \\
	=&k!\sum_{l=0}^ke^{-tl}(1-e^{-t})^{k-l}\sigma^{k-l}_1\dbinom{k}{l}\frac{\sigma^l_1}{W_\phi(l+1)},
\end{align*}
which shows that for all $k\in\bb N$,
\begin{equation*}
\eK^\phi_t\Mphi\mathsf{p}_k=\Mphi\eK^{{\rs}}_{t}\mathsf{p}_k
\end{equation*}
and therefore, on $\poly$,
\begin{align*}
	\eK^\phi_t\Mphi=\Mphi\eK^{{\rs}}_{t}.
\end{align*}
To prove now \eqref{it:Iphi_bdd}, it is plain, from Lemma~\ref{lem:Mphi}, that $\Mphi(\poly)=\poly$. Then, under the condition $\sigma^2>0$, we have that $\P(I_{\sigma^2}\in [0,1])=1$, see \cite[Proposition 6.7]{Patie-Savov-GeL} (note that $I_{\sigma^2}=\sigma^2 I_\phi$, where $I_\phi$ is the exponential functional defined in the aforementioned paper) and thus $\Mphi $ is a Markov operator. By means of H\"older's inequality,
one obtains, for any $\f\in \ell^2({\meK_{\sigma^2}})$,
\begin{equation}  \hlabel{eq:MfMf2}
	\|\Mphi \f\|_{\ell^2(\meKp)}\leq \|\Mphi \f^2\|_{\ell^1(\meKp)} = \meKp \Mphi \f^2.
\end{equation}
Now, for all $k\in\ZZ_+$, using Lemma~\ref{lem:Mphi} and Proposition~\ref{prop:n_phi}\eqref{prop:mom_mu}, we obtain \[\meKp\Mphi\p_k=\sum_{n\in\ZZ_+}\meKp(n)\Mphi\p_k(n)=\sum_{n\in\ZZ_+}\meKp(n)\p_k(n)\frac{\sigma^{2k} k!}{W_\phi(k+1)}=\sigma^{2k}k!=\p_k\meK_{\sigma^2}\] which shows that $\meKp\Mphi=\meK_{\sigma^2}$ as $\meKp, \meK_{\sigma^2}$ are moment determinate. Therefore, \eqref{eq:MfMf2} entails that $\Mphi$ is a bounded operator from $\ell^2(\meK_{\sigma^2})$ to $\ell^2(\meKp)$ when $\sigma^2>0$. Hence, by the density of $\poly$ in $\ell^2(\meK_{\sigma^2})$, the intertwining relation given by \eqref{eq:inter_bd_sf} extends to $\ell^2(\meK_{\sigma^2})$. This completes the proof of the proposition. \qed

\subsection{Hilbert sequences and spectral expansion}\hlabel{ss:hilbert} In this section, we introduce a few notions from non classical harmonic analysis which have been shown recently to be central in the understanding of  the spectral expansions of non self-adjoint operators in Hilbert spaces, see e.g.~\cite{Patie-Savov-GeL}. Two sequences  $( \eigL)_{k\geq0}$ and $(\mathsf{V}_k)_{k\geq0}$ are said to be biorthogonal  in the Hilbert space $\ell^2(\mathbf m)$ if for any $k,l \in \ZZ_+$,
\begin{equation} \hlabel{eq:def_bio}
	\spnu{ \eigL ,\mathsf{V}_l}{\mathbf m} = \mathbbm{1}_{\{k=l\}}.
\end{equation}
Moreover, a sequence that admits a biorthogonal sequence will
be called \textit{minimal} and a sequence that is both minimal and complete, in the sense that its linear span is dense in $\ell^2(\mathbf m)$, will be called \textit{exact}.  It is easy to show that a sequence $( \eigL )_{k\geq 0} $  is minimal if and only if none of its elements can be approximated by linear combinations of the others. If this is the
case, then a biorthogonal sequence will be uniquely determined if and only if $( \eigL )_{k\geq 0}$
is complete. We proceed with some basic notions related to the concept of frames in Hilbert spaces. A recent and thorough account on these Hilbert space sequences can be found in the book of Christensen \cite{Christensen-03}.  A  sequence $( \eigL )_{k\geq 0}$ in $\ell^2(\mathbf m)$ is a frame if there exist   $A,B>0$ such that  the frame inequalities
\begin{equation} \hlabel{eq:frame1}
	A   \|f\|^2_{\ell^2(\mathbf m)} \leq \sum_{k=0}^{\infty}  |\langle f,  \eigL \rangle_{\mathbf m} |^2 \leq B   \|f\|^2_{\ell^2(\mathbf m)}
\end{equation}
hold, for all $f \in\ell^2(\mathbf m)$.  If only the upper  bound exists,  $( \eigL )_{k\geq 0}$ is called a Bessel sequence. A frame sequence is always complete in the Hilbert space and when it is minimal,  it is called a Riesz sequence. The latter are very useful objects as they share substantial properties with orthonormal sequences. Indeed, a Riesz sequence always admits a unique biorthogonal sequence $(\mathsf{V}_k)_{k\geq0}$ which is also a Riesz sequence and both together form the so-called Riesz basis. Moreover, the expansion  in terms of the Riesz basis of any element of the Hilbert space is  unique and convergent in the topology of the norm.  When $( \eigL )_{k\geq 0}$ is merely a Bessel sequence, that is only the upper  frame condition in \eqref{eq:frame1} is  satisfied, then the so-called synthesis operator, that is the linear operator $\mathcal{S} : \ell^2(\ZZ_+) \rightarrow \ell^2(\mathbf m)$ defined by
\begin{equation} \hlabel{eq:def_syn}
	\mathcal{S} : \underline{c}=(c_k)_{k\geq 0} \mapsto \mathcal{S}(\underline{c})=\sum_{k=0}^{\infty}  c_k  \eigL
\end{equation}
is a bounded operator with (operator) norm $ \|\mathcal{S}\|_{\mathbf m} \leq \sqrt{B} $, that is, the series is norm convergent for any sequence in $\ell^2(\ZZ_+)$. However, $\mathcal{S}$ is not in principle onto as the $( \eigL )_{k\geq 0}$ does not form in general a basis of the Hilbert space.

\begin{proposition}\hlabel{prop:eigen}
Let $\phi \in \Be$.
	\begin{enumerate}
		\item\hlabel{it:eigen} For any   $k\in \ZZ_+$, $\eigLd \in \lnu$, and, for any $t>0$,
		\[\SLp_t \eigLd=e^{-kt}\eigLd.\]
		Moreover, $\textrm{Span}\{\eigLd, k>0\}=\cc{P}$ which is dense in $\lnu$.
		
		\item\hlabel{it:eignorm}Assume that $\sigma^2>0$. Then, $(\eigLd)_{k\geq 0}$ is an  exact Bessel sequence in $\lnu$ with bound $1$ and for any $k\in \ZZ_+$,
		\begin{equation}\hlabel{eq:normP}
			\left\|\eigLd\right\|_{\lnu}\leq 1.
		\end{equation}
		\item \hlabel{it:eignorm2} If $\sigma^2>0$ and $\mathrm{d}_{\phi}>0$. Then,  $ \left(\sqrt{\mf{c}_{k}(\mathrm{d}_{\phi})}\eigLd\right)_{k\geq 0}$ is a Bessel sequence with, for all $k\in \ZZ_+$,
		\begin{equation}\hlabel{eq:bound_dp}
			\left\|\eigLd\right\|_{\lnp}\leq \frac{1}{\sqrt{\mf{c}_{k}(\mathrm{d}_{\phi})}}
		\end{equation}
		where $\mf{c}_{k}(\mathrm{d}_{\phi})=\frac{\Gamma(k+\mathrm{d}_{\phi}+1)}{\Gamma(k+1)\Gamma(\mathrm{d}_{\phi}+1)}$.
	\end{enumerate}
\end{proposition}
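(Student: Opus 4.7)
Each $\eigLd$ is by construction a polynomial in $n$ of exact degree $k$ (the leading coefficient comes from the $r=k$ term in~\eqref{eq:def_e}), so $\eigLd\in\ell^2(\meKp)$ follows from the finite moments of $\meKp$ (Theorem~\ref{thm:generation_lag}\eqref{it:invariant}); moreover $\Span\{\eigLd:k\geq 0\}=\poly$ is dense in $\ell^2(\meKp)$ by the moment determinacy proved in Proposition~\ref{prop:n_phi}\eqref{prop:mom_mu}. The eigen-identity $\eK^\phi_t\eigLd=e^{-kt}\eigLd$ will be obtained by applying $\eK^\phi_t$ termwise using Lemma~\ref{lem:timechange}, swapping the order of summation via $\binom{k}{r}\binom{r}{l}=\binom{k}{l}\binom{k-l}{r-l}$, and collapsing the inner sum through $\sum_{j=0}^{k-l}(-1)^j\binom{k-l}{j}(1-e^{-t})^j=e^{-t(k-l)}$, which restores $\eigLd$ up to the factor $e^{-tk}$.

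\textbf{Item (2).} The crux is the factorization $\eigLd=\Mphi\mathsf{P}^{\rs}_k$, where $\mathsf{P}^{\rs}_k$ denotes the eigenfunction of the self-adjoint chain $\eK^{\rs}$ associated with $\phi(u)=\sigma_1 u$. Applying $\Mphi$ termwise to the definition of $\mathsf{P}^{\rs}_k$ and invoking Lemma~\ref{lem:Mphi}, the denominator $W_{\sigma_1}(r+1)=\sigma_1^r r!$ appearing in $\mathsf{P}^{\rs}_k$ cancels the identical factor produced by $\Mphi\p_r$, leaving the $W_\phi(r+1)$ denominator of $\eigLd$. In this affine reference case ($\beta=0$) Example~\ref{ex:1} gives $\|\mathsf{P}^{\rs}_k\|^2_{\ell^2(\meK_{\sigma^2})}=\mf{c}_k(0)^{-1}=1$, and combined with density of $\poly$ this makes $(\mathsf{P}^{\rs}_k)_{k\geq 0}$ an orthonormal basis of $\ell^2(\meK_{\sigma^2})$. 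For any $f\in\ell^2(\meKp)$, Parseval applied to $\Mphiad f\in\ell^2(\meK_{\sigma^2})$ together with $\|\Mphiad\|=\|\Mphi\|\leq 1$ from Theorem~\ref{prop:inter-bdsf}\eqref{it:Iphi_bdd} yields
\begin{equation*}
\sum_{k=0}^{\infty}|\langle f,\eigLd\rangle_{\meKp}|^2=\sum_{k=0}^{\infty}|\langle\Mphiad f,\mathsf{P}^{\rs}_k\rangle_{\meK_{\sigma^2}}|^2=\|\Mphiad f\|^2_{\meK_{\sigma^2}}\leq\|f\|^2_{\meKp},
\end{equation*}
while $\|\eigLd\|_{\meKp}=\|\Mphi\mathsf{P}^{\rs}_k\|_{\meKp}\leq 1$. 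Completeness of $(\eigLd)_{k\geq 0}$ is item~(1), and minimality is witnessed by the explicit biorthogonal sequence $\eigLdd_k$ of~\eqref{eq:coeig}, whose pairing with $(\eigLd)$ is Theorem~\ref{thm:eig_coeig}\eqref{it:biortho}.

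\textbf{Item (3) and main obstacle.} I would rerun the scheme of item~(2) with the sharper self-adjoint reference $\phi_0(u)=\sigma^2(u+\mathrm{d}_\phi)$, whose skip-free Laguerre chain is the reversible birth-death Meixner chain of Example~\ref{ex:1} with $\beta=\mathrm{d}_\phi$, so that $(\sqrt{\mf{c}_k(\mathrm{d}_\phi)}\,\mathsf{P}^{\phi_0}_k)_{k\geq 0}$ forms an orthonormal basis of $\ell^2(\meK_{\phi_0})$. The plan is to construct a contractive intertwining $J_\phi:\ell^2(\meK_{\phi_0})\to\ell^2(\meKp)$ satisfying $\eK^\phi_t J_\phi=J_\phi\eK^{\phi_0}_t$ and $J_\phi\mathsf{P}^{\phi_0}_k=\eigLd$; the identical Parseval/contraction argument then simultaneously delivers the norm bound $\|\eigLd\|_{\meKp}\leq\mf{c}_k(\mathrm{d}_\phi)^{-1/2}$ and the Bessel estimate $\sum_k\mf{c}_k(\mathrm{d}_\phi)|\langle f,\eigLd\rangle_{\meKp}|^2\leq\|f\|^2_{\meKp}$. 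The main obstacle lies in constructing $J_\phi$: following the continuous-state analogue developed in~\cite{Patie-Savov-GeL}, the natural candidate is a multiplicative Markov kernel built from the exponential functional of a subordinator with Laplace exponent $\phi/\phi_0$ (one first needs to verify that this ratio remains in $\Be$, which uses the choice of $\mathrm{d}_\phi$ to tune the linear part), and checking its $\ell^2$-contractivity requires sharp control on the ratio of Bernstein--gamma functions $W_\phi/W_{\phi_0}$ together with the factorial moments of the two invariant distributions. The intertwining relation itself is formally transparent on $\poly$ since both semigroups act diagonally with common eigenvalue $e^{-kt}$ on the respective polynomial eigenbases, and the extension to $\ell^2(\meK_{\phi_0})$ follows by density once the operator-norm bound is in hand.
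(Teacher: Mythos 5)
Your items (1) and (2) are correct. For the eigen-identity in item (1) you take a genuinely different and more elementary route than the paper: you act with $\eK^\phi_t$ directly on the defining sum via Lemma~\ref{lem:timechange}, swap the order of summation with $\binom{k}{r}\binom{r}{l}=\binom{k}{l}\binom{k-l}{r-l}$, and collapse the inner alternating sum to $e^{-t(k-l)}$; this computation checks out (the sign $(-1)^r=(-1)^{l}(-1)^{r-l}$ supplies both the alternating factor inside the collapsed sum and the $(-1)^l$ needed to reconstitute $\eigLd$). The paper instead pushes the continuous eigen-identity $K^\phi_t\eigLc_k=e^{-kt}\eigLc_k$ from \cite[Theorem~7.3]{Patie-Savov-GeL} through the gateway relation \eqref{eq:gateway_lag} and the injectivity of $\Lambda$ on $\poly$. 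Your route is self-contained at the discrete level (modulo Lemma~\ref{lem:timechange}, which itself rests on the continuous moment formula), while the paper's route generalizes more readily to the co-eigenfunctions. Item (2) is essentially identical to the paper's argument: $\eigLd=\Mphi\eigL^{\sigma^2}$, orthonormality of $(\eigL^{\sigma^2})_k$ from Example~\ref{ex:1} with $\beta=0$, and Parseval applied to $\Mphiad\f$ together with $\|\Mphiad\|\le 1$. One caution: you invoke Theorem~\ref{thm:eig_coeig}\eqref{it:biortho} for minimality, which is downstream of this proposition; you should point instead to the biorthogonality established in Proposition~\ref{prop:eigendual}\eqref{it:biortho1} (as the paper does via \eqref{eq:bioPV}), whose proof does not use exactness, so no circularity arises.

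For item (3) you have correctly identified the strategy the paper uses, but what you flag as the "main obstacle" is precisely the content you leave unproved, so as written this item has a genuine gap. The paper fills it as follows: it sets $d_\epsilon=\mathrm{d}_\phi-\epsilon$ and $\phi_{d_\epsilon}(u)=\frac{u\phi(u)}{u+d_\epsilon}$, which belongs to $\Be$ by \cite[Lemma~10.3]{Patie-Savov-GeL} (note the strict inequality $d_\epsilon<\mathrm{d}_\phi$ is used here; working directly at $\mathrm{d}_\phi$ as you propose is delicate since $\phi(-\mathrm{d}_\phi)$ may vanish or be $-\infty$, and the paper recovers the endpoint bound only by letting $\epsilon\downarrow 0$ at the very end). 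The intertwining kernel is not a new construction: it is the same binomial mixture \eqref{eq:binomial} with $W_{\phi_{d_\epsilon}}$ in place of $W_\phi$, it acts diagonally on $\poly$ by Lemma~\ref{lem:Mphi}, the intertwining $\eK^\phi_t\mathds{I}_{\phi_{d_\epsilon}}=\mathds{I}_{\phi_{d_\epsilon}}\eK^{(d_\epsilon,\sigma^2)}_t$ is Lemma~\ref{lem:d_phi_int} (proved exactly as Theorem~\ref{prop:inter-bdsf}), and the $\ell^2$-contractivity follows from the same Markov-kernel/Jensen argument as \eqref{eq:MfMf2} once one checks $\meKp\mathds{I}_{\phi_{d_\epsilon}}=\meK_{d_\epsilon,\sigma^2}$ on the factorial moments; the "sharp control on $W_\phi/W_{\phi_0}$" you anticipate is not needed beyond this. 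So your plan is sound and coincides with the paper's, but to count as a proof you must (i) verify $\phi_{d_\epsilon}\in\Be$, (ii) establish the intertwining and contractivity by transcribing the $\Mphi$ argument, and (iii) pass to the limit $\epsilon\downarrow 0$ in \eqref{eq:bound_dp}.
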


\begin{proof}
Let $k\in \ZZ_+$, then it is plain, from Proposition \ref{prop:mom_mu}, that,  as a polynomial, $\eigLd \in \lnu$. Then, we  recall, from \cite[Theorem 7.3]{Patie-Savov-GeL} (after multiplying both sides of the next identity by $\rpnn$) that, for any $t>0$ and $k \in \ZZ_+$,
 \begin{eqnarray*}
\SLcp_t \eigLc_k = e^{-k t} \eigLc_k
\end{eqnarray*}
where  we have set
\[\eigLc_k(x) =\rpnn\sum_{r=0}^{k}(-1)^r{k \choose r}\frac{x^r}{W_{\phi}(r+1)} \in \Lnu. \]
Thus, since $\Lambda$ is injective on $\poly\subset \lnu$, the algebra of polynomials, we have $\Lambda^{-1}p_k(n)=\mathsf{p}_{k}(n)$ and
thus, by linearity
\begin{equation}\hlabel{eq:LPp}
\Lambda^{-1} \eigLc_k(n)=\rpnn\sum_{r=0}^{k}(-1)^r  \frac{{k \choose r}}{W_{\phi}(r+1)} \Lambda^{-1}p_r(n)=\eigLd(n).\end{equation}
Finally, we observe from the gateway relationship \eqref{eq:gateway_lag} and the linearity of the operators, that
\[\Lambda \SLp_t \eigLd= K^{\phi}_t\Lambda\Lambda^{-1} \eigLc_k= K^{\phi}_t \eigLc_k=e^{-kt}\eigLc_k=\Lambda e^{-kt}  \eigLd.\]
The injectivity of  $\Lambda$ on $\poly$ yields the eigenfunction property. To complete the proof of \eqref{it:eigen}, we recall the moment determinacy of $\meKp$, stated in Proposition \ref{prop:n_phi}\eqref{prop:mom_mu}, which entails, from classical results on the moment problem, the density property of  the algebra of polynomials in the  weighted Hilbert space, see \cite{Akhiezer-65}. \tred{Next, when $\sigma^2>0$ and $\phi(u)=\sigma^2 u$, we recall, from Example~\ref{ex:1}, that $(\eigL^{\sigma^2})_{k\ge 0}$ is an orthonormal sequence of eigenfunctions of $\eK^{\sigma^2}_t$ associated to the eigenvalues $\{e^{-kt}\}_{k\ge 0}$. Now, from Lemma~\ref{lem:Mphi}, it is easily seen, from the definition of $\eigL^\phi$, that, for all $k\ge 0$,
	\begin{equation*}
	\Mphi\eigL^{\sigma^2}=\eigL^\phi.
	\end{equation*}
Since $\Mphi\in\mathscr{B}\left(\ell^2(\meK_{\sigma^2}),\ell^2(\meKp)\right)$ whenever $\sigma^2>0$, see \eqref{eq:bounded}, it follows that, for all $k\ge 0$, one has
\begin{equation}\hlabel{eq:normP1}
	\|\eigLd\|_{\lnu}\leq \| \eigL^{\sigma^2}\|_{\ell^2(\meK_{\sigma^2})}\le 1.
\end{equation}
After recalling that $(\eigL^{\sigma^2})_{k\ge 0}$ is a complete orthonormal sequence in $\ell^2(\meK_{\sigma^2})$,  we observe that, for any $\f\in\ell^2(\meKp)$,
\begin{align*}
	\sum_{k=0}^\infty\left\langle\f,\eigL^\phi\right\rangle_{\meKp}=\sum_{k=0}^\infty \left\langle\widehat{\mathds{I}}_\phi\f,\eigL^{\sigma^2}\right\rangle_{\meK_{\sigma^2}}=\|\widehat{\mathds{I}}_\phi\f\|^2_{\ell^2(\meK_{\sigma^2})}\le\|\f\|^2_{\lnu}.
\end{align*}
This shows that $(\eigL^\phi)_{k\ge 0}$ is a Bessel sequence in $\lnu$.
}Combining item \eqref{it:eigen} with the existence of a biorthogonal sequence, see \eqref{eq:bioPV} below, we get that $(\eigLd)_{k\geq 0}$ is exact, which proves \eqref{it:eignorm}.
Finally, to prove \eqref{it:eignorm2}, let $d_\epsilon=\mathrm{d}_\phi-\epsilon$ for some $0<\epsilon<\mathrm{d}_\phi$ and define $\phi_{d_\ep}(u)=\frac{u\phi(u)}{u+d_\ep}$. From \cite[Lemma 10.3]{Patie-Savov-GeL}, it follows that $\phi_{d_\ep}\in\Be$ and \[\lim_{u\to\infty}\frac{\phi_{d_\ep}(u)}{u}=\sigma^2.\] Now, we need the following whose proof  can be carried out by following a line of reasoning similar to the one  of Theorem~\ref{prop:inter-bdsf}.
\begin{lemma}\hlabel{lem:d_phi_int}
	For all $t\ge 0$,
	\begin{align}
		\eK^{\phi}_t\mathds{I}_{\phi_{d_\ep}}=\mathds{I}_{\phi_{d_\ep}}\eK^{(d_\ep,{\sigma^2})}_{t} \text{ on } \ \ell^2(\meK_{d_\ep,{\sigma^2}})
	\end{align}
\tred{where $\eK^{(d_\ep,{\sigma^2})}$ is the discrete Laguerre semigroup associated to $\phi(u)=\sigma^2(u+d_\ep)$ and $\meK_{d_\ep,\sigma^2}$ denotes its invariant distribution}.
\end{lemma}
Then, the proof of item \eqref{it:eigen} ensures that
\[\mathsf{P}^{(d_\ep,\sigma^2)}_k(n)=\rpn\Gamma(d_\ep+1)\sum_{r=0}^k (-1)^r\dbinom{k}{r}\frac{\p_r(n)}{\Gamma(r+d_\ep+1)}\] is an eigenfunction of $\eK^{(d_\ep,\sigma^2)}_t$ corresponding to the eigenvalue $e^{-kt}$. Therefore, using Lemma~\ref{lem:d_phi_int}, we have that
 $\mathds{I}_{\phi_{d_\ep}}\mathsf{P}^{(d_\ep,\sigma^2)}_k$ is an eigenfunction of $\eK^{\phi}_t$ corresponding to the eigenvalue $e^{-kt}$, and, in fact,
\begin{align*}
	\mathds{I}_{\phi_{d_\ep}}\eigL^{(d_\ep,\sigma^2)}(n)=&\rpn\Gamma(d_\ep+1)\sum_{r=0}^{k}   { k \choose r} \frac{(-1)^r\mathds{I}_{\phi_{d_\ep}}\mathsf{p}_r(n)}{\Gamma(r+d_\ep+1)}
	\\=&\rpn\sum_{r=0}^{k}  { k \choose r} \frac{(-1)^r\mathsf{p}_r(n)}{W_{\phi}(r+1)}
	=\eigLd (n).
\end{align*}
Since the sequence $ \left(\sqrt{\mf{c}_{k}(d_\ep)}\eigL^{(d_\ep,\sigma^2)}\right)_{k\geq 0}$ is an orthonormal sequence in $\ell^2(\meK_{d_\ep,\sigma^2})$, see \cite[equation (7)]{Karlin-McG} or Example~\ref{ex:1}, and $\mathds{I}_{\phi_{d_\ep}}$ is bounded, we deduce that $ \left(\sqrt{\mf{c}_{k}(d_\ep)}\eigLd\right)_{k\geq 0}$ is a Bessel sequence in $\lnu$ and $\|\eigLd\|_{\meKp}\leq\frac{1}{\sqrt{\mf{c}_{k}(d_\ep)}}$. Letting $\ep\downarrow 0$, the proof of \eqref{it:eignorm2} follows.
\end{proof}
\begin{proposition}\hlabel{prop:eigendual}
	Let $\phi\in\Be$, and,  for $k\in\ZZ_+$, $\eigLdd_k$ be defined as in \eqref{eq:coeig}. Then, the following holds.
\begin{enumerate}	
\item\hlabel{it:coeig1} For all $k\in\ZZ_+$, $\eigLdd_k\in\ell^2(\meKp)$ and, for all $t\ge 0$, \[\wi{\eK}^\phi_t\eigLdd_k=e^{-kt}\eigLdd_k.\]
\item \hlabel{it:biortho1} For all $k,l\in\ZZ_+$,
\[\left\langle\eigLd,\eigLdd_k\right\rangle_{\meKp}=\bbm{1}_{\{k=l\}}.\]
\item\hlabel{it:coeig1_bessel} If $\sigma^2>0$ and $\ovl{\ovl{\Pi}}(0)<\infty$, then $\left(\rpn\frac{\eigLdd_k}{\sqrt{\mf{c}_k(\mathrm{m}_\phi)}}\right)_{k\ge 0}$ is a Bessel sequence in $\ell^2(\meKp)$ where we recall that $\mathrm{m}_\phi=\frac{m+\ovl{\ovl{\Pi}}(0)}{\sigma^2}$ and $\mf{c}_k(\mathrm{m}_\phi)=\frac{\Gamma(k+\mathrm{m}_\phi+1)}{\Gamma(\mathrm{m}_\phi+1)\Gamma(k+1)}$.
\end{enumerate}
\end{proposition}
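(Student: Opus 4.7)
The plan is to reduce all three items to a single key pairing identity on the dense polynomial subspace $\poly$. First I would substitute the explicit formula \eqref{eq:coeig} for $\eigLdd_k$ into $\langle\p_l,\eigLdd_k\rangle_{\meKp}$ and interchange the two summations, justified by absolute convergence via the moment identity $\meKp\p_m = W_\phi(m+1)$ from Proposition \ref{prop:n_phi}(\ref{prop:mom_mu}). The Vandermonde identity for falling factorials $\p_k(m+r) = \sum_{j=0}^k\binom{k}{j}\p_j(r)\p_{k-j}(m)$, combined with the multiplicativity $\p_l(i)\p_{k-j}(i-l) = \p_{l+k-j}(i)$, would collapse each inner sum into an expression in $W_\phi$, and a final binomial identity $\sum_{r=j}^l(-1)^{r-j}/((l-r)!(r-j)!) = \mathbbm{1}_{\{l=j\}}$ delivers
\begin{equation*}
\langle\p_l,\eigLdd_k\rangle_{\meKp} \,=\, (-1)^k(1+\sigma_1^{-1})^{\frac{k}{2}}\binom{l}{k}W_\phi(l+1)\,\mathbbm{1}_{\{l\geq k\}}.
\end{equation*}

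For item (\ref{it:biortho1}), I would expand $\mathsf{P}^\phi_l$ in the falling-factorial basis via \eqref{eq:def_e} and combine the identity above with the Chu--Vandermonde collapse $\sum_r(-1)^r\binom{l}{r}\binom{r}{k} = (-1)^k\binom{l}{k}\mathbbm{1}_{\{l=k\}}$ to conclude $\langle\mathsf{P}^\phi_l,\eigLdd_k\rangle_{\meKp} = \mathbbm{1}_{\{l=k\}}$. For the adjoint eigenfunction property in item (\ref{it:coeig1}), the plan is to verify $\langle\p_l,\wi{\eK}^\phi_t\eigLdd_k\rangle_{\meKp} = e^{-kt}\langle\p_l,\eigLdd_k\rangle_{\meKp}$ for every $l\in\ZZ_+$: substituting the explicit expansion of $\eK^\phi_t\p_l$ from Lemma \ref{lem:timechange} into the key identity and applying the binomial convolution $\sum_{j=0}^{l-k}\binom{l-k}{j}e^{-tj}(1-e^{-t})^{l-k-j} = 1$ will yield this equality. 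Density of $\poly$ in $\ell^2(\meKp)$ (Proposition \ref{prop:n_phi}(\ref{prop:mom_mu})) will then extend it to $\wi{\eK}^\phi_t\eigLdd_k = e^{-kt}\eigLdd_k$, provided $\eigLdd_k\in\ell^2(\meKp)$.

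The main obstacle will be to establish that $\eigLdd_k\in\ell^2(\meKp)$. My approach is to transfer this from the continuous Laguerre setting via the adjoint gateway $\wi{\Lambda}_\phi$ of Proposition \ref{prop:int_laguerre}(\ref{it:l2_int}). Defining the continuous co-eigenfunction by the Rodrigues-type formula $\mathcal{V}^\phi_k(x)\nu_\phi(x) = \frac{(-1)^k(1+\sigma_1^{-1})^{k/2}}{k!}\frac{d^k}{dx^k}[x^k\nu_\phi(x)]$, I would carry out $k$-fold integration by parts in $\int_0^\infty e^{-x}x^n\nu_\phi(x)\mathcal{V}^\phi_k(x)\,dx$ (the boundary terms vanish by the exponential decay of $e^{-x}$ at infinity and the regularity of $x^k\nu_\phi(x)$ near $0$), apply Leibniz's rule to $\frac{d^k}{dx^k}[e^{-x}x^n]$, and then use $\int_0^\infty e^{-x}x^j\nu_\phi(x)\,dx = j!\,\meKp(j)$ to establish the pointwise identity $\wi{\Lambda}_\phi\mathcal{V}^\phi_k(n) = (-1)^k\eigLdd_k(n)$. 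Combined with the membership $\mathcal{V}^\phi_k\in\mathbf{L}^2(\nu_\phi)$ (drawn from the Hilbert-space theory of generalized Laguerre polynomials in \cite{Patie-Savov-GeL}) and the boundedness of $\wi{\Lambda}_\phi$, this yields $\eigLdd_k\in\ell^2(\meKp)$. For item (\ref{it:coeig1_bessel}), under $\sigma^2>0$ and $\ovl{\ovl{\Pi}}(0)<\infty$, I would mimic the proof of Proposition \ref{prop:eigen}(\ref{it:eignorm2}): introduce a one-parameter family of Bernstein functions of the form $u\mapsto\sigma^2(u+\mathrm{m}_\phi-\epsilon)$, express $\eigLdd_k$ as an adjoint image of the orthonormal co-eigenfunctions of the associated reference self-adjoint Laguerre chain (whose squared norms are $\mf{c}_k(\mathrm{m}_\phi-\epsilon)^{-1}$), exploit the boundedness of this adjoint intertwining operator in the relevant weighted $\ell^2$ spaces, and let $\epsilon\downarrow 0$ to obtain the claimed Bessel bound.
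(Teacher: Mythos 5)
Your combinatorial core is correct and genuinely different from the paper's route. The key pairing identity $\langle\p_l,\eigLdd_k\rangle_{\meKp}=(-1)^k(1+\sigma_1^{-1})^{k/2}\binom{l}{k}W_\phi(l+1)\mathbbm{1}_{\{l\ge k\}}$ does follow from the Vandermonde collapse you describe (the interchange of sums is justified since $\meKp$ has all moments), and combined with the expansion of $\eigLd$ and with Lemma~\ref{lem:timechange} it delivers both the biorthogonality and the weak relation $\langle\eK^\phi_t\p_l,\eigLdd_k\rangle_{\meKp}=e^{-kt}\langle\p_l,\eigLdd_k\rangle_{\meKp}$ uniformly in $\phi\in\Be$. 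This is arguably cleaner than the paper's treatment of item~\eqref{it:biortho1}, which for $\sigma^2>0$ pulls the continuous biorthogonality of $(\eigLc_k,\eigdLc_k)$ back through $\Lambda$ and $\wi{\Lambda}_\phi$, and for $\sigma^2=0$ needs a delicate limiting argument along the perturbation $\phi_\ep(u)=\ep u+\phi(u)$.

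The genuine gap is the membership $\eigLdd_k\in\ell^2(\meKp)$, on which your whole item~\eqref{it:coeig1} is conditioned. Your transfer $\eigLdd_k=\pm\wi{\Lambda}_\phi\mathrm{V}^\phi_k$ requires $\mathrm{V}^\phi_k\in\Lnu$, but \cite{Patie-Savov-GeL} gives this only for $k\in\ZZ_{\phi}$: when $\sigma^2=0$ and $\ovl{\Pi}(0)<\infty$ (so $\phi(\infty)<\infty$, e.g.\ the Beta skip-free chain of Section~\ref{s:exm}), $\ZZ_\phi$ is a finite set, $\nu_\phi$ is only finitely differentiable, and the Rodrigues formula and $\bmrm{L}(\nu_\phi)$-membership fail for large $k$ — this is precisely the discrepancy flagged in the Remark after Theorem~\ref{thm:eig_coeig}. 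The paper closes this case by a direct estimate, using the two-sided bound $e^{-\phi(\infty)}W_\phi(n+1)/n!\le\meKp(n)\le\phi(\infty)^n/n!$ to show that $\|\eigLdd_k\|^2_{\ell^2(\meKp)}$ is a convergent series; your proof needs such a supplementary argument (also note that for $\phi(\infty)<\infty$ the boundary term in your integration by parts sits at the finite endpoint $\phi(\infty)$ of the support of $\nu_\phi$, not at infinity). Separately, your plan for item~\eqref{it:coeig1_bessel} to ``mimic'' Proposition~\ref{prop:eigen}\eqref{it:eignorm2} runs the intertwining in the wrong direction: if $\eK^\phi_tT=T\eK^{\mathrm{ref}}_t$ with $T$ bounded, then $\wi{T}$ carries co-eigenfunctions of $\eK^\phi$ to those of the reference chain, which yields a lower rather than an upper bound on $\|\eigLdd_k\|_{\ell^2(\meKp)}$. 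To obtain the Bessel bound you must exhibit $\eigLdd_k$ as the image of an orthonormal family under a bounded map; under $\sigma^2>0$ and $\ovl{\ovl{\Pi}}(0)<\infty$ one has $\ZZ_\phi=\ZZ_+$, so the simplest repair is the paper's: quote the Bessel property of $\bigl(\rpn\mathrm{V}^\phi_k/\sqrt{\mf{c}_k(\mathrm{m}_\phi)}\bigr)_{k\ge0}$ in $\Lnu$ from \cite{Patie-Savov-GeL} and push it through the contraction $\wi{\Lambda}_\phi$.
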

\begin{proof}
Let us recall that for $\phi\in\Be$ defined by
	\[\phi(u)=m+\sigma^2u+\int_0^\infty (1-e^{-uy})\ovl{\Pi}(y)dy,\] one has
	\[\phi(\infty)=\lim_{u\to \infty}\phi(u)=\infty \mathbbm{1}_{\{\sigma^2>0\}}+\left(\overline{\overline{\Pi}}(0)+m\right)\mathbbm{1}_{\{\sigma^2=0\}} \]
	where $0 \leq \overline{\overline{\Pi}}(0)=\int_0^\infty\ovl{\Pi}(y)dy$.
	Finally, let
$ \mathbf{k}_{\phi}=\infty \mathbbm{1}_{\{\sigma^2>0\}}+\frac{\overline{\Pi}(0)}{2\phi(\infty)},$
	and define the set
	\begin{align}\hlabel{eq:Z_phi}
		\ZZ_{\phi}=\begin{cases}\ZZ_+ \qquad &\textrm{ if } \mathbf{k}_{\phi}=\infty\\
			\{k\in \ZZ_+; \: k< \mathbf{k}_{\phi}\} \qquad &\textrm{ otherwise. } \end{cases}
	\end{align}
	When both $\ovl{\Pi}(0)=\infty$ and $\phi(\infty)=\infty$, we have set $\frac{\ovl{\Pi}(0)}{\phi(\infty)}=\infty$. Also, the condition \eqref{eq:levy} on $\Pi$ implies that
	\[\int_0^\infty(1\wedge y)\ovl{\Pi}(y)dy<\infty\]
	and as a consequence, $\ovl{\ovl{\Pi}}(0)<\infty$ whenever $\ovl{\Pi}(0)<\infty$. Thus, $\mathbf{k}_\phi<\infty$ only when $\sigma^2=0$ and $\ovl{\Pi}(0)<\infty$. It is shown in \cite[Theorem 5.2]{Patie-Savov-GeL}, that $ \nu_{\phi} \in \esC_0^{\lfloor  2 \mathbf{k}_\phi \rfloor -1}(\R_+)$ and in \cite[Theorem 1.11]{Patie-Savov-GeL} that, for any $k\in  \ZZ_{\phi}$,  $  \mathrm{V}^\phi_k \in \Lnu$, where \[ \mathrm{V}^\phi_k(x)=\frac{\rppp}{k!}\frac{\frac{d^k}{dx^k}(x^{k} \nu_{\phi}(x))}{\nu_{\phi}(x)}.\]
	We now assume that $k\in  \mathtt \ZZ_{\phi}$ and recall  from \cite[Theorem 8.1]{Patie-Savov-GeL}, that, for all $t>0$, 	\begin{align*}
		\SLcpd_t   \mathrm{V}^\phi_k = e^{-k t}    \mathrm{V}^\phi_k.
	\end{align*}
	Now, the intertwining relationship \eqref{eq:adjoint_int} entails that, for any $t>0$,
	\[\SLpd_t \Ladp   \mathrm{V}^\phi_k =\Ladp \SLcpd_t   \mathrm{V}^\phi_k =e^{-kt}\Ladp    \mathrm{V}^\phi_k.\]
	Let us now characterize the quantity $\wi{\Lambda}_\phi \mathrm{V}^\phi_k$ when $k\in\ZZ_\phi$. From \eqref{eq:Lambda_tilde} it can be easily checked that $\Ladp  \mathrm{V}^\phi_0(n)=\Ladp \mathbf{1}(n)=\mathbf{1}$. Writing $\varrho_1=\frac{1}{2}\log(1+\sigma^{-1}_1)$, for any $n,k\in \N$ we have
	\begin{eqnarray}
		e^{-k\varrho_1}\Ladp\mathrm{V}^\phi_k(n)&=&\frac{1}{n!\meKp(n)}\int_{0}^{\infty} e^{-x}x^n\frac{\frac{d^k}{dx^k}(x^{k} \nu_{\phi}(x))}{k!}dx \nonumber  \\
		&=&\frac{(-1)^k}{n!\meKp(n)k!}\int_{0}^{\infty} \frac{d^k}{dx^k}(e^{-x}x^n)x^{k} \nu_{\phi}(x)dx\hlabel{eq:mellin}   \\
		&=&\frac{(-1)^k}{n!\meKp(n)k!}\sum_{j=0}^{k\wedge n}(-1)^{k-j}{ k \choose j}\frac{n!}{(n-j)!}\int_{0}^{\infty} e^{-x} x^{k+n-j} \nu_{\phi}(x)dx \nonumber  \\
		&=&\frac{1}{\meKp(n)}\sum_{j=0}^{k\wedge n}(-1)^{j}\frac{(k+n-j)!}{(k-j)!(n-j)!j!}\meKp(k+n-j) \nonumber \\
		&=& e^{-k\varrho_1}\eigLdd_k(n) \hlabel{eq:LVv} \hlabel{eq:V=eigldd}
	\end{eqnarray}
	where we used, for the second identity, the fact that, for all $j=1,\ldots, k$,
	\[\lim_{x\to 0, \phi(\infty)}\frac{d^{k-j}}{dx^{k-j}}(x^{k} \nu_{\phi}(x))\frac{d^{j}}{dx^{j}}(e^{-x}x^n)=0. \]
	Indeed, these asymptotic behaviors are deduced easily from \cite[Lemma 5.22]{Patie-Savov-GeL}, which states  that for any $x>0$, $0\le j\le k$ and $a<\mathrm{d}_{\phi}$, $\frac{d^{k-j}}{dx^{k-j}}(x^{k} \nu_{\phi}(x)) \leq C x^{j+a}$ for some constant $C>0$. Since $\Ladp: \Lnu  \mapsto \lnu$ is a bounded linear operator, see Proposition~\ref{prop:int_laguerre}\eqref{it:l2_int}, we have that $\eigLdd_k=\Ladp  \mathrm{V}^\phi_k \in \lnu$ and this concludes the proof of \eqref{it:coeig1} when $k\in\ZZ_\phi$. Now, let $\sigma^2>0$. Then, for any $k,l\in \ZZ_+$, we have,  from Propositions \ref{prop:eigen} and the previous computation that both $\eigLd, \eigLdd_l \in \lnu$ and using \eqref{eq:LPp} and \eqref{eq:LVv}, we obtain
	\begin{eqnarray}\hlabel{eq:bioPV}
		\left\langle\eigLd,\eigLdd_l\right\rangle_{\meKp}&=&\left\langle \eigLd,\Ladp\eigdLc_l\right\rangle_{\meKp}=\left\langle  \eigLc_k,\eigdLc_l\right\rangle_{\nu_{\phi}}=\mathbbm{1}_{\{k=l\}}\end{eqnarray}
	where we used that $(\eigLc_k,\eigdLc_k)_{k\geq 0}$ is a biorthogonal sequence in $\Lnu$, see \cite[Theorem 1.22(2)]{Patie-Savov-GeL}, recall that with the notation of this paper, $\eigLc_k=\rpn\mathcal{P}_k$ and $\eigdLc_k=\rpp\mathcal{V}_k$. This proves \eqref{it:biortho1} when $\sigma^2>0$.
	
	Next, assume that $k\notin\ZZ_{\phi}$ which implies that $\mathbf{k}_{\phi}<\infty$ and thus $\phi(\infty)<\infty$. This entails that the following two-sided bounds hold for any $n \in \ZZ_+$
	\begin{equation}
		e^{-\phi(\infty)} W_{\phi}(n+1) \leq \int_{0}^{\phi(\infty)} e^{-x}x^n   \nu_{\phi}(x)dx \leq W_{\phi}(n+1)\leq  \phi(\infty)^n
	\end{equation}
	where the last inequality follows since $\phi$ is non-decreasing.
	Thus, we have
	\begin{eqnarray}
		e^{-\phi(\infty)}\frac{ W_{\phi}(n+1)}{n!}\leq \meKp (n) \leq \frac{ W_{\phi}(n+1)}{n!}\leq \frac{\phi(\infty)^n}{n!}.
	\end{eqnarray}
	Hence, for any $k\in \ZZ_+$ fixed,  with \[S(k)=\sum_{n=0}^{k} \frac{1}{\meKp(n)}\left(\sum_{j=0}^{n}(-1)^{j}\frac{(k+n-j)!}{(k-j)!(n-j)!j!}\meKp(k+n-j)\right)^2<\infty,\] we have
	\begin{align}
		 e^{-2k{\varrho}_1}\|\eigLdd_k\|^2_{\lnu} &= S(k)+ \sum_{n=k}^{\infty} \frac{1}{\meKp(n)}\left(\sum_{j=0}^{k}(-1)^{j}\frac{(k+n-j)!}{(k-j)!(n-j)!j!}\meKp(k+n-j)\right)^2 \nonumber\\
		&\leq  S(k)+ \sum_{n=k}^{\infty} \frac{e^{\phi(\infty)}n!}{W_{\phi}(n+1)}\left(\sum_{j=0}^{k}\frac{(k+n-j)!}{(k-j)!(n-j)!j!}\frac{ \phi(\infty)^{k+n-j}}{(k+n-j)!}\right)^2 \nonumber \\
		&\leq S(k)+ e^{\phi(\infty)}\sum_{n=k}^{\infty} \frac{ n! \phi(\infty)^{2n}}{W_{\phi}(n+1)((n-k)!)^2}\left(\sum_{j=0}^{k}\frac{\phi(\infty)^{k-j}}{(k-j)!j!}\right)^2 \nonumber \\
		&\leq S(k)+ e^{\phi(\infty)}\left(\sum_{j=0}^{k}\frac{\phi(\infty)^{k-j}}{(k-j)!j!}\right)^2 \sum_{n=k}^{\infty} \frac{n! \phi(\infty)^{2n}}{W_{\phi}(n+1)((n-k)!)^2}<\infty \hlabel{eigLdd_bd}
	\end{align}
	where the last inequality follows after observing that,
	\[\lim_{n \to \infty}\frac{a_{n+1}}{a_n}=\lim_{n \to \infty}\frac{(n+1) \phi(\infty)^2}{\phi(n+1)(n+1-k)^2}=0\]
	where the $a_n$'s are the coefficient of the last series.
  When $\phi\in\Be$ is such that $
		\phi(u)=m+\int_0^\infty(1-e^{-uy})\overline{\Pi}(y) dy$,
	let us define $\phi_\ep\in\Be$ as $\phi_\epsilon(u)=\epsilon u+\phi(u)$ with $\ep> 0$. Then, from Proposition~\ref{prop:n_phi}, it follows that for small values of $\ep$ and for all $n\in\ZZ_+$,
	\begin{align*}
		\meK_{\phi_\ep}(n)=\frac{1}{n!}\sum_{r=0}^\infty (-1)^r\frac{W_{\phi_\ep}(n+r+1)}{r!}, \\
		\meKp(n)=\frac{1}{n!}\sum_{r=0}^\infty (-1)^r\frac{W_{\phi}(n+r+1)}{r!}.
	\end{align*}
	As $\phi_\ep(u)\ge \phi(u)$ for all $\ep$ and $u\ge 0$, it follows that $W_{\phi_\ep}(n)\ge W_{\phi}(n)$ for all $n\in\bb{N}$. Also, $W_{\phi_\ep}\downarrow W_{\phi}$ pointwise as $\ep\to 0$. Since, for small values of $\ep$ (e.g.~$0\le\ep<1$),
	\begin{align*}
		\sum_{r=0}^\infty\frac{W_{\phi_\ep}(r+n+1)}{r!}<\infty,
	\end{align*}
	the dominated convergence theorem yields the following pointwise convergence as $\ep\to 0$,
	\begin{align}
		\meK_{\phi_\ep}\to\meK_\phi.
	\end{align}
	 Hence, for any $j,k\in\ZZ_+$,
	\begin{align}
		\lim_{\ep\to 0}\langle\mathsf{V}^{\phi_\ep}_k,\mathsf{p}_j\rangle_{\mathbf{n}_{\phi_\ep}}=&\lim_{\ep\to 0}\sum_{n=0}^\infty\mathsf{p}_j(n)\mathsf{V}^{\phi_\ep}_k(n)\meK_{\phi_\ep}(n) \\ =&\lim_{\ep\to 0}\sum_{n=0}^\infty\mathsf{p}_j(n)\sum_{j=0}^{k\wedge n} (-1)^{k-j}\frac{(k+n-j)!}{(k-j)! (n-j)! j!}\meK_{\phi_\ep}(k+n-j) \nonumber \\
		=&\lim_{\ep\to 0}\sum_{n=0}^k\mathsf{p}_j(n)\sum_{j=0}^n (-1)^{k-j}\frac{(k+n-j)!}{(k-j)! (n-j)! j!}\meK_{\phi_\ep}(k+n-j) \hlabel{phi_ep} \\
		& \ + \lim_{\ep\to 0}\sum_{n=k}^\infty\mathsf{p}_j(n)\sum_{j=0}^k (-1)^{k-j}\frac{(k+n-j)!}{(k-j)! (n-j)! j!}\meK_{\phi_\ep}(k+n-j). \nonumber
	\end{align}
	In \eqref{phi_ep}, the first term is a finite sum and therefore
	\begin{align*}
		\lim_{\ep\to 0}&\sum_{n=0}^k\mathsf{p}_j(n)\sum_{j=0}^n (-1)^{k-j}\frac{(k+n-j)!}{(k-j)! (n-j)! j!}\meK_{\phi_\ep}(k+n-j) \\
		=&\sum_{n=0}^k\mathsf{p}_j(n)\sum_{j=0}^n (-1)^{k-j}\frac{(k+n-j)!}{(k-j)! (n-j)! j!}\meKp(k+n-j).
	\end{align*}
	For the second term in \eqref{phi_ep}, we have
	\begin{align*}
		&\sum_{n=k}^\infty\mathsf{p}_j(n)\sum_{j=0}^k (-1)^{k-j}\frac{(k+n-j)!}{(k-j)! (n-j)! j!}\meK_{\phi_\ep}(k+n-j) \\
		=&\sum_{j=0}^k (-1)^{k-j}\frac{1}{j! (k-j)!}\sum_{n=k}^\infty \mathsf{p}_j(n)\frac{(k+n-j)!}{(n-j)!}\meK_{\phi_\ep}(k+n-j).
	\end{align*}
	Since $\meK_{\phi_\ep}\to\meKp$ pointwise as $\ep\to 0$, the distribution $\meK_{\phi_\ep}$ converges to $\meKp$ weakly. Also, for any $k\in\ZZ_+$, as $\ep\to 0$,
	\begin{align*}
		\sum_{n=0}^\infty\mathsf{p}_k(n)\meK_{\phi_\ep}(n)=W_{\phi_\ep}(k+1)\to W_{\phi}(k+1)=\sum_{n=0}^\infty\mathsf{p}_k(n)\meKp(n).
	\end{align*}
	Applying \eqref{eq:stirling_1} on the previous identity we obtain that, for all $k\in\ZZ_+$, as $\ep\to 0$,
	\begin{align}\hlabel{eq:moment_nphi}
		\sum_{n=0}^\infty n^k\meK_{\phi_\ep}(n)\to\sum_{n=0}^\infty n^k\meKp(n).
	\end{align}
	Since, for any $k\in\ZZ_+$ and $j\le k$,
	\begin{align*}
		\frac{(k+n-j)!}{(n-j)!}=\mathrm O(n^k)
	\end{align*}
	uniformly with respect to $j$,
	using a dominated convergence theorem one can show that for each $j\le k$,
	\begin{align*}
		\lim_{\ep\to 0}\sum_{n=k}^\infty \mathsf{p}_j(n)\frac{(k+n-j)!}{(n-j)!}\meK_{\phi_\ep}(k+n-j)=\sum_{n=k}^\infty \mathsf{p}_j(n)\frac{(k+n-j)!}{(n-j)!}\meKp(k+n-j).
	\end{align*}
	Thus, \eqref{phi_ep} yields
	\begin{align}\hlabel{eq:moment_lim}
		\lim_{\ep\to 0}\left\langle\mathsf{V}^{\phi_\ep}_k,\mathsf{p}_j\right\rangle_{\meK_{\phi_\ep}}=\left\langle\mathsf{V}^{\phi}_k,\mathsf{p}_j\right\rangle_{\meKp}.
	\end{align}
 Now, if $\sigma^2=0$, since, for any $k\in\mathbb Z_+$, $\eigLd\in\cP=\Span\{\p_j, j\in\mathbb Z_+\}$ and the coefficient of $\p_j$ in $\mathsf{P}^{\phi_\ep}_k$ converges to the same in $\eigLd$ for all $j\in\ZZ_+$, as $\ep\to 0$, applying \eqref{eq:moment_lim} it follows that, for all $k,l\in\mathbb Z_+$,
\begin{align*}
	\bbm{1}_{\{k=l\}}=\lim_{\ep\to 0}\left\langle \mathsf{P}^{\phi_\ep}_k, \mathsf{V}^{\phi_\ep}_l\right\rangle_{\meK_{\phi_\ep}}=\left\langle\eigLd,\eigLdd_l \right\rangle_\meKp
\end{align*}
where $\phi_\ep(z)=\ep u+\phi(u)$. This proves \eqref{it:biortho1} for all $\sigma^2\ge 0$ hence for all $\phi\in\Be$.

	To show that $\eigLdd_k$ is a co-eigenfunction of $\eK^\phi$ when $\sigma^2=0$, we proceed as follows. Proposition~\ref{prop:eigen}\eqref{it:eigen} and
\eqref{eq:bioPV} yield that, for $l,k\in\ZZ_+$, and $t>0$,

	\begin{align*}
		\left\langle \wi{\mathds{K}}^\phi_t\eigLdd_k,\mathsf{P}^\phi_l\right\rangle_\meKp=&\left\langle \eigLdd_k, \eK^\phi_t\mathsf{P}^\phi_l\right\rangle_\meKp
		=e^{-tk}\left\langle\eigLdd_k,\mathsf{P}^\phi_l\right\rangle_\meKp=e^{-tk} \mathbbm{1}_{\{k=l\}}.
	\end{align*}
Therefore, for all $\phi\in\Be$, $t>0$ and $k,l\in\ZZ_+$, we get
	\begin{align*}
		\left\langle \wi{\mathds{K}}^\phi_t\eigLdd_k- e^{-tk}\eigLdd_k,\mathsf{P}^\phi_l\right\rangle_\meKp=& 0.
	\end{align*}
Since $(\eigLd)_{k\ge 0}$ is dense in $\ell^2(\meKp)$, we deduce that, for all $t\ge 0$ and $k\in\ZZ_+$,
	\begin{align}
		e^{tk}\eKad^\phi_t\eigLdd_k=\eigLdd_k,
	\end{align}
which proves \eqref{it:coeig1} for all $\phi\in\Be$.

To prove item \eqref{it:coeig1_bessel}, it is known from \cite[Theorem 10.1(1)]{Patie-Savov-GeL} (after multiplying by the factor $\rpn$) that, when $\sigma^2>0$ and $\ovl{\ovl{\Pi}}(0)<\infty$,   $\left(\rpn\frac{\mathrm{V}^\phi_k}{\sqrt{\mf{c}_k(\mathbf{m}_\phi)}}\right)_{k\ge 0}$ is a Bessel sequence in $\bmrm{L}(\nu_\phi)$.
Recalling that, for any $k\ge 0$, $\wi{\Lambda}_\phi \mathrm{V}^\phi_k=\eigLdd_k$, see \eqref{eq:V=eigldd},  and $\wi{\Lambda}_\phi:\bmrm{L}(\nu_\phi)\to\ell^2(\meKp)$ is a contraction, we conclude that $\left(\rpn\frac{\eigLdd_k}{\sqrt{\mf{c}_k(\mathbf{m}_\phi)}}\right)_{k\ge 0}$ is a Bessel sequence in $\ell^2(\meKp)$ and for all $k\in\ZZ_+$, \[\|\eigLdd_k\|_{\ell^2(\meKp)}\le\rpp\sqrt{\mf{c}_k(\mathrm{m}_\phi)}.\]

\end{proof}
	
	\subsection{Proof of Theorem~\ref{thm:eig_coeig}}\hlabel{ss:eig_coeig_pf} The proof of the item \eqref{it:spect} and \eqref{it:biortho} follows directly from Proposition~\ref{prop:eigen}\eqref{it:eigen} and Proposition~\ref{prop:eigendual}\eqref{it:coeig1},\eqref{it:biortho1}.
	
		Finally for the proof of \eqref{it:spectral_exp}, we now assume that $\sigma^2>0$. We recall from \eqref{eq:tau} that $\sigma_1=\sigma^2$ in this case.
	Then, for all $\mathsf{f}\in\ell^2(\meK_{\sigma^2})$ and $t>0$, the intertwining relation \eqref{eq:inter-bdsf1} yields that
	\begin{eqnarray}\hlabel{eq:spec_exp}
		\SLp_t \Mphi \mathsf{f} &= & \Mphi \eK^{\sigma^2}_{t}  \mathsf{f} \nonumber \\
		&=& \Mphi \sum_{k=0}^{\infty}e^{-kt} \left\langle \mathsf{f}, \eigL^{\sigma^2} \right\rangle_{\meK_{\sigma^2}} \eigL^{\sigma^2} \nonumber \\
		&= &  \sum_{k=0}^{\infty}e^{-kt} \left\langle \mathsf{f}, \eigL^{\sigma^2} \right\rangle_{\meK_{\sigma^2}} \eigLd \hlabel{eq:bessel_exp}
	\end{eqnarray}
	where the second identity relies on the spectral decomposition of the reversible birth-death chain, see Example~\ref{ex:1} with $\phi(u)=\sigma^2 u$, whereas the last one is justified as follows. First, since $\sigma^2>0$, $\Mphi: \ell^2(\meK_{\sigma^2}) \mapsto \lnu $ is a bounded linear operator, and, with the help of Lemma~\ref{lem:Mphi} and the definition of $\eigLd$ in \eqref{eq:def_e}, it follows that $\Mphi \eigL^{\sigma^2} =\eigLd$. Moreover, from Proposition \ref{prop:eigen}, we have that the sequence $(\eigLd)_{k\geq 0}$ is a Bessel sequence and thus its associated synthesis operator $\mathcal{S}: \ell^2(\ZZ_+) \mapsto \lnu$, see \eqref{eq:def_syn} for definition, is bounded. Since $\left(\eigL^{\sigma^2}\right)_{k\geq0}$ is an orthonormal sequence in $\ell^2(\meK_{\sigma^2})$, it implies that for all $t\geq 0$,
	\begin{equation*}
	\left(e^{-kt}\left\langle \mathsf{f}, \eigL^{\sigma^2} \right\rangle_{\meK_{\sigma^2}}\right)_{k\geq 0} \in \ell^2(\ZZ_+)
	\end{equation*}
	and hence the series on the right-hand side of \eqref{eq:bessel_exp} is in $\lnu$. Next, as noted before, we have that $\Mphi\eigL^{\sigma^2}=\eigLd$ for all $k\in\ZZ_+$. Now, recalling that $(\eigLd,\eigLdd_k)_{k\ge 0}$ is biorthogonal in $\lnu$, see Proposition~\ref{prop:eigendual}\eqref{it:biortho1}, we have for any $l,k\in\ZZ_+$,
	\begin{align*}
		\left\langle\mathsf{P}^{\sigma^2}_l,\Mphiad\eigLdd_k\right\rangle_{\meK_{\sigma^2}}=\left\langle\Mphi\mathsf{P}^{\sigma^2}_l,\eigLdd_k\right\rangle_{\meKp}=\left\langle\mathsf{P}^\phi_l,\eigLdd_k\right\rangle_{\meKp}=\mathbbm{1}_{\{k=l\}}
	\end{align*}
	As $(\eigL^{\sigma^2})_{k\ge 0}$ is orthonormal in $\ell^2(\meK_{\sigma^2})$ (hence biorthogonal to iteself), by uniqueness of biorthogonal sequence we conclude that $\Mphiad\eigLdd_k=\eigL^{\sigma^2}$ for all $k\in\ZZ_+$.
	Therefore, writing $\mathsf{g} = \Mphi \mathsf{f}\in \lnu $, we have, for all $k\in \ZZ_+$,
	\begin{eqnarray*}
		\left\langle \mathsf{g}, \eigLdd_k \right\rangle_\meKp=\left\langle \mathsf{f}, \Mphiad\eigLdd_k\right\rangle_{\meK_{\sigma^2}}=\left\langle\f,\eigL^{\sigma^2}\right\rangle_{\meK_{\sigma^2}}
	\end{eqnarray*}
	Thus, from \eqref{eq:spec_exp}, for $\mathsf{g}\in \textrm{Ran}(\Mphi)$, the range of $\Mphi$, one gets
	\begin{eqnarray*}\hlabel{eq:spec-exp}
		\SLp_t  \mathsf{g}
		&= &  \sum_{k=0}^{\infty}e^{-kt} \left\langle \mathsf{g}, \eigLdd_k \right\rangle_\meKp \!\!\!\eigLd= \mathds{S}_t\mathsf{g}
	\end{eqnarray*}
	where the last identity serves as defining the spectral operator. Note that since $\langle \SLp_t\mathsf{g}, \eigLdd_k \rangle_\meKp=\langle \mathsf{g}, \SLpd_t\eigLdd_k \rangle_\meKp=e^{-kt} \langle \mathsf{g}, \eigLdd_k \rangle_\meKp$, we deduce that
	\[\mathds{S}_t\mathsf{g}=\sum_{k=0}^{\infty}\left\langle \SLp_t\mathsf{g}, \eigLdd_k \right\rangle_\meKp\!\!\!\eigLd. \]
	Moreover,  as the closure (in $\lnu$) of $\textrm{Ran}(\Mphi)$ is $\lnu$, by the bounded linear transformation theorem, $\SLp_t$  is the unique continuous extension of the continuous operator $\mathds{S}_t: \textrm{Ran}(\Mphi)\mapsto \lnu$. We now extend the domain of $\mathds{S}_t$ to $\lnu$. First, by means of Cauchy-Schwartz inequality, we have, for any $\mathsf{g} \in \lnu$ and  $k\in \N$,
	\begin{eqnarray*}
		\left|\langle \mathsf{g}, \eigLdd_k \rangle_\meKp\right|\leq \|\mathsf{g}\|_{\ell^2(\meKp)} \: \left\|\eigLdd_k \right\|_{\ell^2(\meKp)}&=&\|\mathsf{g}\|_{\ell^2(\meKp)}\|\wi{\Lambda}_\phi \mathrm{V}^\phi_k\|_{\ell^2(\meKp)}\\ & \leq &\|\mathsf{g}\|_{\ell^2(\meKp)} \: \left\|  \mathrm{V}^\phi_k\right\|_{\Lnu}
	\end{eqnarray*}
	where we used Proposition \ref{prop:eigendual} and the fact that $\wi{\Lambda}_\phi$ is a bounded operator. Next, since from \cite[Theorem 10.1]{Patie-Savov-GeL}, we have for  $k$ large enough and all $\epsilon>0$, $\|  \mathrm{V}^\phi_k\|_{\Lnu}\leq C_\epsilon\rpp e^{\epsilon k}$, with $C_\epsilon>0$, this implies that for all $\mathsf{g} \in \lnu$ and $t>\lnt$,
	\[\left(e^{-kt} \langle \mathsf{g}, \mathrm{V}^\phi_k \rangle_\meKp\right)_{k\geq 0} \in \ell^2(\ZZ_+).\]
	Finally, the Bessel property of the sequence $(\eigLd)_{k\geq 0}$ entails that $\mathds{S}_t\mathsf{g} \in  \lnu$, which completes the proof.

For the item \eqref{it:spect_compact}, we recall from \cite[Theorem~10.1]{Patie-Savov-GeL} and the proof of Theorem~\ref{thm:eig_coeig}\eqref{it:spectral_exp} that, for all $\epsilon>0$ and $k\in\ZZ_+$, there exists $C_\ep>0$ such that
\begin{align}\hlabel{eq:exp_bound_coeig}
\left\|\eigLdd_k\right\|_{\lnp}\le\left\|\mathrm{V}^\phi_k\right\|_{\bmrm{L}(\nu_\phi)}\le C_\ep\rpp e^{\epsilon k}
\end{align}
whenever $\sigma^2>0$. Moreover, for all $k\in\ZZ_+$, $\|\eigLd\|_{\lnp}\le 1$. Therefore, \eqref{eq:spect_exp} entails that for all $t>\lnt$, the operator $\eK^\phi_t$ can be approximated by the sequence of finite dimensional operators \[\f\mapsto\sum_{k=0}^N e^{-kt}\left\langle\f,\eigLdd_k\right\rangle_{\meKp}\!\!\!\eigLd, \ N\geq 1,\] which proves the compactness of the semigroup.
Finally, for $l\in\ZZ_+$, let us choose $\f=\delta_l$ in \eqref{eq:spect_exp}. Then, for all $\sigma^2>0, t>0$ and $n\in\ZZ_+$, we have
\begin{align}
	\eK^\phi_t(n,l)=\eK^\phi_t\delta_l(n)=\sum_{k=0}^\infty e^{-kt}\eigLd(n)\eigLdd_k(l)\meKp(l)
\end{align}
where the last identity holds in $\ell^2(\meKp)$. Now, from Proposition~\ref{prop:eigen}\eqref{it:eignorm}, we have that, for all $k,n\in\ZZ_+$,
\begin{align*}
	\eigLd(n)^2\meKp(n)\le\|\eigLd\|^2_{\ell^2(\meKp)}\le 1
\end{align*}
while, from Jensen's inequality and \eqref{eq:exp_bound_coeig}, we get, for all $k,l\in\ZZ_+$, that there exists a uniform constant $C_\ep>0$ such that for all $\epsilon>0$,
\begin{align}
	|\eigLdd_k(l)\meKp(l)|\le\|\eigLdd_k\|_{\ell^1(\meKp)}\le\|\eigLdd_k\|_{\ell^2(\meKp)}\le C_\ep\rpp e^{\epsilon k}.
\end{align}
Since $\meKp(n)>0$ for all $n\in\ZZ_+$, see \eqref{eq:nphi_pos}, for all $\lnt+\ep<t$, we have
\begin{align}
\sum_{k=0}^\infty e^{-kt}|\eigLd(n)||\eigLdd_k(l)|\meKp(l)\le C_\ep\sum_{k=0}^\infty e^{-kt}\frac{e^{\ep k}}{\sqrt{\meKp(n)}}<\infty.
\end{align}
As $\ep>0$ is arbitrary, the proof of the item \eqref{it:lag_transition} is completed.
\subsection{Proof of Theorem~\ref{thm:spgap_ent}\eqref{thm:l0_norm}}\hlabel{ss:l0_norm} From \cite[Lemma 10.4]{Patie-Savov-GeL}, we get that \[\mathrm{m}_{\phi}=\lim_{n\to\infty}\frac{\phi(n)-\sigma^2 n}{\sigma^2}=\frac{m+\int_0^{\infty} \Pi(y,\infty)dy}{\sigma^2}>d_\ep=(\mathrm{d}_{\phi}-\epsilon)\mathbbm{1}_{\{d_{\phi}-\epsilon>0\}}.\] Let us write ${\varrho }=\frac{1}{2}\log(1+\sigma^{-2})$. Then, using \eqref{eq:spect_exp} along with the fact that $\mathsf{P}^\phi_{\!0}\equiv 1$, we obtain, for all $\f\in\ell^2_0(\meKp)=\left\{\g\in\ell^2(\meKp); \meKp\g=0\right\}$,
\begin{align}
	\eK^\phi_t\f=&\sum_{k=1}^\infty e^{-kt}\left\langle \f,\eigLdd_k\right\rangle_{\meKp}\eigLd \nonumber\\
	=&\sum_{k=1}^\infty e^{-kt}\sqrt{\frac{\mf{c}_k(\mathrm{m}_\phi)}{\mf{c}_k(\mathrm{d}_\phi)}}e^{k{\varrho}}\left\langle\f,e^{-k{\varrho}}\frac{\eigLdd_k}{\sqrt{\mf{c}_k(\mathrm{m}_\phi)}}\right\rangle_{\meKp}\sqrt{\mf{c}_k(\mathrm{d}_\phi)}\eigLd. \hlabel{eq:bessel}
\end{align}
Since $\left(\sqrt{\mf{c}_k(\mathrm{d}_\phi)}\eigLd\right)_{k\ge 1}$ is a Bessel sequence with bound $1$, we obtain from the boundedness of the synthesis operator, see \eqref{eq:def_syn}, and \eqref{eq:bessel} that, writing $\overline{\eigLdd_k}=\frac{\eigLdd_k}{\sqrt{\mf{c}_k(\mathrm{m}_\phi)}}$, for all $\f\in\ell^2_0(\meKp)$,
\begin{align}
	\|\eK^\phi_t\f\|^2_{\ell^2(\meKp)}\le&\sum_{k=1}^\infty e^{-2kt}\frac{\mf{c}_k(\mathrm{m}_\phi)}{\mf{c}_k(\mathrm{d}_\phi)}e^{2k{\varrho}}\left|\left\langle\f, e^{-k{\varrho}}\overline{\eigLdd_k}\right\rangle_{\meKp}\right|^2 \nonumber\\
	=&e^{-2t}\frac{\mf{c}_1(\mathrm{m}_\phi)}{\mf{c}_1(\mathrm{d}_\phi)}e^{2{\varrho}}\sum_{k=1}^\infty e^{-2(k-1)(t-{\varrho})} \frac{\mf{c}_k(\mathrm{m}_\phi)\mf{c}_1(\mathrm{d}_\phi)}{\mf{c}_k(\mathrm{d}_\phi)\mf{c}_{1}(\mathrm{m}_\phi)}\left|\left\langle\f,e^{-k{\varrho}}\overline{\eigLdd_k}\right\rangle_{\meKp}\right|^2.\hlabel{eq:bessel2}
\end{align} Now, from the proof of \cite[Theorem~1.18(3)]{Patie-Savov-GeL}, we know that
\[\sup_{k\ge 1}e^{-2(k-1)(t-{\varrho})}\frac{\mf{c}_k(\mathrm{m}_\phi)\mf{c}_1(\mathrm{d}_\phi)}{\mf{c}_k(\mathrm{d}_\phi)\mf{c}_1(\mathrm{m}_\phi)}\le 1\iff t> T=\frac{1}{2}\log\left(\frac{\mathrm{m}_\phi+2}{\mathrm{d}_\phi+2}\right)+{\varrho}.\] Thus, using this bound, the fact that $\left(e^{-k{\varrho}}\overline{\eigLdd_k}\right)_{k\ge 1}$ is also a Bessel sequence (with bound $1$) in $\ell^2(\meKp)$ and the second inequality in \eqref{eq:frame1},  we deduce from \eqref{eq:bessel2} that,  for all $\f\in\ell^2_0(\meKp)$ and $t>T$,
\begin{align}
\|\eK^\phi_t\f\|^2_{\ell^2(\meKp)}\le e^{2{\rm t}_\r}\frac{\mf{c}_1(\mathrm{m}_\phi)}{\mf{c}_1(\mathrm{d}_\phi)} e^{-2t}\|\f\|^2_{\ell^2(\meKp)}=\frac{1+\r}{\r}\frac{\mathrm{m}_\phi+1}{\mathrm{d}_\phi+1}e^{-2t}\|\f\|^2_{\ell^2(\meKp)}. \hlabel{eq:K_equib}
\end{align}
When $t\le T$, $\frac{1+\r}{\r}\frac{\mathrm{m}_\phi+1}{\mathrm{d}_\phi+1}e^{-2t}\ge \frac{\mathrm{m}_\phi+1}{\mathrm{d}_\phi+1}\frac{\mathrm{d}_\phi+2}{\mathrm{m}_\phi+2}\ge 1$ as $\mathrm{m}_\phi\ge \mathrm{d}_\phi$. Therefore, \eqref{eq:K_equib} holds for all $t>0$ as $\eK^\phi$ is a contraction semigroup. Finally, noting that, for any $\f\in\ell^2(\meKp)$, $\f-\meKp\f\in\ell^2_0(\meKp)$, the proof of the theorem follows.

\subsection{Interweaving between skip-free and continuous Laguerre semigroups}\hlabel{ss:interweaving}
Following \cite{miclo_patie_2}, for two Markov semigroups $P,P'$ defined on two Banach spaces $\mathbf B, \mathbf B'$ respectively, we say that $P$ has an  \textit{interweaving relation}  with  $P'$ if there exist two Markov kernels $\Lambda:\mathbf{B}'\to\mathbf{B}$ and $\Lambda':\mathbf{B}\to\mathbf{B}'$, and a non-negative  random variable $\tau$ such that
\begin{eqnarray*}
	&P\Lambda=\Lambda P' \ \text{  on  } \ \mathbf B' \\
	&P'\Lambda'=\Lambda' P \ \text{  on  } \ \mathbf{B} \textrm{ and } \\
	&\Lambda\Lambda'=P_\tau=\int_0^\infty P_t\PP(\tau\in dt).
\end{eqnarray*}
We call $\tau$ the \textit{warm-up time} or the \textit{delay} and we  write  $P\looparrowleft P'$  or $P \stackrel{\tau}{\looparrowleft} P'$  to emphasize the dependence on $\tau$. Note that when $\tau=\delta_{{t}_0}$ is the degenerate random variable at  $t_0>0$, we may simply write, when there is no confusion,  $P \stackrel{{t_0}}{\looparrowleft} P'$.

\noindent When $\tau$ is in addition infinitely divisible we say that $P$ admits an \textit{interweaving relation with an infinitely divisible} warm-up time  with  $P'$ and  we  write   $P \stackrel{\tau}{\looparrowleft} P'$.
\noindent Finally, when
we also have
\begin{equation} \hlabel{eq:sym}
	\Lambda\Lambda'=P'_\tau
\end{equation}
we say that  there is a \textit{symmetric  interweaving relation} between $P$ and $P'$ and we write $P \stackrel{\tau}{\leftrightsquigarrow} P'$. We refer to \cite{miclo_patie_2} for a thorough study and several applications of this concept that refines the one of intertwining relations.

Let now $\eK^\phi$ be the skip-free Laguerre semigroup corresponding to the Bernstein function $\phi$ associated with the triplet $(m,\sigma^2,\Pi)$, see \eqref{eq:bernstein_def}, and $K^{\sigma^2}$ be the diffusive Laguerre semigroup with generator
\begin{align}\hlabel{eq:diff_lag_gen}
	L^{\sigma^2}=\sigma^2 x\frac{d^2}{dx^2}+(\sigma^2-x)\frac{d}{dx}.
\end{align}
\begin{theorem}\hlabel{thm:discPQ} If $\sigma^2>0$ and $\ovl{\ovl{\Pi}}(0)=\int_0^\infty\Pi(y,\infty)dy<\infty$, then for all  $\beta>\mathrm{m}_\phi=\frac{m+\ovl{\ovl{\Pi}}(0)}{\sigma^2}$,
	\begin{align*}
		\eK^\phi\overset{\tau_\beta}{\leftrightsquigarrow} K^{\sigma^2}
	\end{align*}
	where $\tau_\beta$ is an infinite divisible random variable characterized, for any  $ u>0$, by
	\begin{eqnarray}\hlabel{eq:tau_beta}
		\int_0^\infty e^{-ut}\bb{P}(\tau_\beta\in dt)&=&\left(\frac{\sigma^2}{1+\sigma^2}\right)^u\frac{\Gamma(1+\beta)\Gamma(u+1)}{\Gamma(u+\beta+1)}\\ &=&\left(\frac{\sigma^2}{1+\sigma^2}\right)^ue^{-\phi_\beta(u)}. \nonumber
	\end{eqnarray}
\end{theorem}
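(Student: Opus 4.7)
The plan is to decompose the desired symmetric interweaving into two stages and chain them via independence: first $\eK^\phi\stackrel{\tau_1^\beta}{\leftrightsquigarrow}\eK^{\sigma^2}$ at the purely discrete level with warm-up $\tau_1^\beta\stackrel{(d)}{=}-\log B$, $B\sim\mathrm{Beta}(1,\beta)$, and second $\eK^{\sigma^2}\stackrel{c}{\leftrightsquigarrow}K^{\sigma^2}$ connecting the self-adjoint discrete and continuous Laguerre semigroups with deterministic warm-up $c=\log(1+\sigma^{-2})$. The total warm-up is then $\tau_\beta=c+\tau_1^\beta$ (independent sum), whose Laplace transform reproduces $\lbrb{\sigma^2/(1+\sigma^2)}^u\Gamma(\beta+1)\Gamma(u+1)/\Gamma(u+\beta+1)$ as required by \eqref{eq:tau_beta}.

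For the gateway stage, the two kernels are $\Lambda_{\sigma^2}$ from Proposition~\ref{prop:int_laguerre} (specialized to $\phi(u)=\sigma^2 u$) and its adjoint $\wi{\Lambda}_{\sigma^2}$ given by \eqref{eq:Lambda_tilde}, which by direct inspection is a Gamma Markov kernel. Self-adjointness of $\eK^{\sigma^2}$ and $K^{\sigma^2}$ (Theorem~\ref{thm:generation_lag}\eqref{it:self-adj}) lifts the intertwining $K^{\sigma^2}\Lambda_{\sigma^2}=\Lambda_{\sigma^2}\eK^{\sigma^2}$ to the reverse $\eK^{\sigma^2}\wi{\Lambda}_{\sigma^2}=\wi{\Lambda}_{\sigma^2}K^{\sigma^2}$. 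The Poisson factorial-moment identity $\Lambda_{\sigma^2}\p_r(x)=x^r$ yields $\Lambda_{\sigma^2}\eigL^{\sigma^2}=\rpn\cdot L_k^{(0)}(x/\sigma^2)$, which is $\rpn$ times an $\bmrm{L}(\nu_{\sigma^2})$-orthonormal eigenfunction of $K^{\sigma^2}$. The compositions $\Lambda_{\sigma^2}\wi{\Lambda}_{\sigma^2}$ and $\wi{\Lambda}_{\sigma^2}\Lambda_{\sigma^2}$ therefore act on their respective eigenbases as multiplication by $\rpn^2=(\sigma^2/(1+\sigma^2))^k=e^{-kc}$, identifying them with $K^{\sigma^2}_c$ and $\eK^{\sigma^2}_c$ respectively.

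For the discrete stage, one kernel is $\Mphi$ from Theorem~\ref{prop:inter-bdsf}, for which Lemma~\ref{lem:Mphi} easily yields $\Mphi\eigL^{\sigma^2}=\eigLd$. The reverse Markov kernel $\mathsf{J}_\beta:\ell^2(\meKp)\to\ell^2(\meK_{\sigma^2})$ must act on eigenfunctions by $\mathsf{J}_\beta\eigLd=\mf{c}_k(\beta)^{-1}\eigL^{\sigma^2}$, so that the composition-scalar $\mf{c}_k(\beta)^{-1}=\Gamma(\beta+1)\Gamma(k+1)/\Gamma(k+\beta+1)=\E[B^k]$ is precisely the Beta moment. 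I would construct $\mathsf{J}_\beta$ explicitly as a Beta-mixing-type Markov kernel---the discrete analogue of the reverse exponential-functional kernel from \cite{Patie-Savov-GeL}---verify $\eK^{\sigma^2}\mathsf{J}_\beta=\mathsf{J}_\beta\eK^\phi$ on the polynomial core $\poly$, and deduce the symmetric semigroup identities $\mathsf{J}_\beta\Mphi=\eK^{\sigma^2}_{\tau_1^\beta}$ and $\Mphi\mathsf{J}_\beta=\eK^\phi_{\tau_1^\beta}$ by evaluation on the biorthogonal eigenbasis $(\eigLd,\eigLdd_k)$ of Theorem~\ref{thm:eig_coeig}. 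The assumption $\beta>\mathrm{m}_\phi$ is invoked here to ensure positivity and integrability of the expansion coefficients defining $\mathsf{J}_\beta$.

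Infinite divisibility of $\tau_\beta$ reduces to checking that
\[
\phi_\beta(u)=u\log(1+\sigma^{-2})+\log\tfrac{\Gamma(u+\beta+1)}{\Gamma(\beta+1)\Gamma(u+1)}
\]
is a Bernstein function, which follows at once from the integral representation $\psi(u+\beta+1)-\psi(u+1)=\int_0^\infty e^{-(u+1)t}\tfrac{1-e^{-\beta t}}{1-e^{-t}}\,dt$ of the digamma difference, making $\phi_\beta'(u)$ manifestly completely monotone. I expect the main obstacle to lie in the construction of $\mathsf{J}_\beta$ as a bona fide Markov kernel (rather than a merely spectrally-defined operator) and the verification of the reverse intertwining $\eK^{\sigma^2}\mathsf{J}_\beta=\mathsf{J}_\beta\eK^\phi$ despite the non-self-adjointness of $\eK^\phi$; this is exactly where the threshold $\beta>\mathrm{m}_\phi$ is essential and where the general interweaving framework of \cite{miclo_patie_2} is needed.
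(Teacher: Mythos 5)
Your global architecture is sound: symmetric interweavings do compose, with independent warm-up times adding, and your Laplace-transform bookkeeping is correct — $e^{-u\log(1+\sigma^{-2})}\,\E[B^u]$ with $B\sim\mathrm{Beta}(1,\beta)$ does reproduce \eqref{eq:tau_beta}, and your digamma argument for the infinite divisibility of $\tau_\beta$ is fine. The second stage ($\eK^{\sigma^2}\leftrightsquigarrow K^{\sigma^2}$ via $\Lambda_{\sigma^2},\wi{\Lambda}_{\sigma^2}$, both self-adjoint, identified on complete orthonormal eigenbases) is also essentially the content of \cite[Proposition~25]{miclo_patie}, which the paper invokes.

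The genuine gap is exactly where you locate it, and it is not a detail one can defer: the operator $\mathsf{J}_\beta$ is \emph{defined only by its action on the eigenfunctions} $\eigLd\mapsto\mf{c}_k(\beta)^{-1}\eigL^{\sigma^2}$, and a spectrally prescribed operator has no reason to be positivity-preserving; establishing that it is a bona fide Markov kernel is the entire difficulty of the theorem, and "I would construct it as a Beta-mixing-type kernel and verify" is not a proof. Note moreover that the paper never builds such a purely discrete reverse kernel $\ell^2(\meKp)\to\ell^2(\meK_{\sigma^2})$. Instead it routes the reverse direction through the continuous state space: the second intertwiner is $\Upsilon=d_{\frac{1}{\sigma^2}}\wi{\mathrm{B}}_\beta\mathrm{V}_\beta\Lambda:\ell^2(\meKp)\to\bmrm{L}(\nu_{\sigma^2})$, a composition of operators already known to be Markov kernels — $\Lambda$ from Proposition~\ref{prop:int_laguerre}, and $\mathrm{V}_\beta,\wi{\mathrm{B}}_\beta$ from the continuous interweaving $K^\phi\overset{\tau^{(\beta)}}{\leftrightsquigarrow}K^{(\beta)}$ of \cite[Proposition~26]{miclo_patie_2}, which is where the threshold $\beta>\mathrm{m}_\phi$ actually enters (positivity of $\mathrm{V}_\beta$). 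The closing of the loop is then a computation, not a construction: using $\Lambda\Mphi=d_{\sigma^2}\mathrm{I}_\phi\Lambda$ (Lemma~\ref{lem:Mphi_int}) and $\Lambda\wi{\Lambda}_{\sigma^2}=K^{\sigma^2}_{2\varrho}$, one pushes $\Mphi\wi{\Lambda}_{\sigma^2}\Upsilon$ onto the known factorization $\mathrm{I}_\phi\wi{\mathrm{B}}_\beta\mathrm{V}_\beta=K^\phi_{\tau^{(\beta)}}$ and reads off $\tau_\beta=\tau^{(\beta)}+2\varrho$. To complete your argument you would either have to prove positivity of $\mathsf{J}_\beta$ directly (hard, and not done anywhere in the paper) or, as the paper does, replace it by a composition of already-certified kernels passing through $\bmrm{L}(\nu_\phi)$ and $\bmrm{L}(\nu_\beta)$.
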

Before proving the theorem, let us show the following  lemma.
\begin{lemma}\hlabel{tau_int}
	Let $K^{\sigma^2}$ be the semigroup defined as above and $K$ be the semigroup with generator \[L=x\frac{d^2}{dx^2}+(1-x)\frac{d}{dx}.\] Then, for all $t\ge 0$,
	\[K^{\sigma^2}_td_{\frac{1}{\sigma^2}}=d_{\frac{1}{\sigma^2}}K_t \text{ on } \ \bmrm{L}(\nu)\] where for $\alpha>0$,  $d_\alpha f(x)=f(\alpha x)$ is the dilation operator on $\bb{R}_+$ and $\nu(x)dx= e^{-x}dx, x>0,$ is the unique invariant distribution of the semigroup $K$.
\end{lemma}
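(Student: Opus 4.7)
The identity to establish is an intertwining between the two diffusive Laguerre semigroups via the linear dilation $d_{1/\sigma^2}$. The simplest strategy is to first check it at the generator level on a convenient core, then lift it to the semigroups using standard closure/Kolmogorov-equation arguments of the same flavor as those used in the proof of Proposition~\ref{thm:gateway}\eqref{it:1}.

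My first step would be to take as core the algebra of polynomials $\poly$ (or $\esC_c^\infty(\R_+)$), and to check by direct differentiation that on this core
\[
L^{\sigma^2} d_{1/\sigma^2} f = d_{1/\sigma^2} L f.
\]
Indeed, writing $g(x) = f(x/\sigma^2)$, one has $g'(x) = \sigma^{-2} f'(x/\sigma^2)$ and $g''(x) = \sigma^{-4} f''(x/\sigma^2)$, so
\[
L^{\sigma^2} g(x) = \sigma^2 x \cdot \sigma^{-4} f''(x/\sigma^2) + (\sigma^2 - x)\sigma^{-2} f'(x/\sigma^2) = (Lf)(x/\sigma^2),
\]
which gives the desired identity. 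Polynomials form a core for both $L$ and $L^{\sigma^2}$ (this is a classical fact for the Laguerre diffusion and follows e.g.\ from spectral considerations), and $d_{1/\sigma^2}$ preserves $\poly$.

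Second, I would check the boundedness of $d_{1/\sigma^2}$ between the relevant spaces, i.e.\ that $d_{1/\sigma^2}$ maps $\bmrm{L}(\nu)$ continuously into the natural $\bmrm[1]{L}$--space associated with $L^{\sigma^2}$ (namely $\bmrm[1]{L}(\nu^{\sigma^2})$ with $\nu^{\sigma^2}(x)dx = \sigma^{-2}e^{-x/\sigma^2}dx$), and that $K^{\sigma^2}$ and $K$ are strongly continuous contraction semigroups on their respective $\bmrm[1]{L}$--spaces. Then, mimicking the argument after \eqref{eq:int_full_domain}, for $f$ in the domain of $L$ I would differentiate $s \mapsto K^{\sigma^2}_s d_{1/\sigma^2} K_{t-s} f$ on $[0,t]$ to obtain zero and conclude
\[
K^{\sigma^2}_t d_{1/\sigma^2} f = d_{1/\sigma^2} K_t f,
\]
extending to all of $\bmrm{L}(\nu)$ by density and boundedness.

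Alternatively, and perhaps most transparently, there is a purely probabilistic one-liner: if $(X_t)_{t\geq0}$ is the Laguerre diffusion with generator $L$, then It\^o's formula (or the scaling of the generator just verified above) shows that $(\sigma^2 X_t)_{t\geq0}$ has generator $L^{\sigma^2}$. Consequently, for any $f \in \bmrm{L}(\nu)$ and $y\geq0$,
\[
K^{\sigma^2}_t f(y) = \EE_{y}\!\left[f(\sigma^2 X_t^{(y/\sigma^2)})\right] = K_t(f \circ \sigma^2 \cdot)(y/\sigma^2) = d_{1/\sigma^2}\, K_t\, d_{\sigma^2} f(y),
\]
and composing on the right by $d_{1/\sigma^2}$ and using $d_{\sigma^2}d_{1/\sigma^2} = \mathrm{Id}$ yields the claim. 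The only point requiring care is that since $d_{1/\sigma^2}$ is not an isometry of $\bmrm{L}(\nu)$, one should track the function spaces correctly; this is exactly what the first (analytic) approach takes care of. I expect no genuine obstacle here, as this lemma is really a scaling statement; the only mild subtlety is the domain/core verification, which is handled either by the $\poly$-based closure argument or by the pathwise identification of the two diffusions.
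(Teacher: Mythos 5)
Your proposal is correct and takes essentially the same route as the paper: verify the commutation $L^{\sigma^2}d_{1/\sigma^2}f=d_{1/\sigma^2}Lf$ on polynomials, invoke the fact that $\poly$ is a core for $L$ in $\bmrm{L}(\nu)$ together with the boundedness and invertibility (in fact isometry) of $d_{1/\sigma^2}:\bmrm{L}(\nu)\to\bmrm{L}(\nu_{\sigma^2})$, and lift the identity to the semigroups. The only quibble is notational — the lemma lives on $\bmrm{L}(\nu)=\mathbf{L}^2(\nu)$, so the target space should be $\mathbf{L}^2(\nu_{\sigma^2})$ rather than the $\mathbf{L}^1$ spaces you wrote — and your probabilistic identification of $\sigma^2 X$ is a valid alternative shortcut but is not needed.
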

\begin{proof}
	It can be easily checked that if $f$ is a polynomial, then
	\begin{align}\hlabel{eq:tau_int}
	L^{\sigma^2} d_{\frac{1}{\sigma^2}}f=d_{\frac{1}{\sigma^2}} Lf.
	\end{align}
	Next, we recall from \cite[Theorem~1.6(3)]{Patie-Savov-GeL} that the set of all polynomials form a core for $L$ in $\bmrm{L}(\nu)$ and $d_{\frac{1}{\sigma^2}}\nu_{\sigma^2}=\nu$ where $\nu_{\sigma^2}$ is the invariant distribution of $K^{\sigma^2}$. Since \[d_{\frac{1}{\sigma^2}}:\bmrm{L}(\nu)\to\bmrm{L}(\nu_{\sigma^2})\] is an invertible operator, \eqref{eq:tau_int} extends at the level of the corresponding semigroups, which proves the lemma.
\end{proof}

\subsection{Proof of Theorem~\ref{thm:discPQ}} Let $K^{(\beta)}$ be the self-adjoint Laguerre semigroup with the generator \[L^{(\beta)}= x\frac{d^2}{dx^2}+(1+\beta-x)\frac{d}{dx}.\]
From \cite[Proposition~26]{miclo_patie_2}, it is known that, for all $\beta>\mathrm{m}_\phi$,
\begin{align*}
K^\phi\overset{\tau^{(\beta)}}{\leftrightsquigarrow} K^{(\beta)}
\end{align*}
where $\tau^{(\beta)}$ is an infinitely divisible random variable with Laplace transform given by
\[\int_0^\infty e^{-us}\bb{P}(\tau^{(\beta)}\in ds)=\frac{\Gamma(1+\beta)\Gamma(u+1)}{\Gamma(u+\beta+1)}, \ \ \ u>0.\]
More precisely, for all $t\ge 0$,
\begin{eqnarray*}
&K^\phi_t \mathrm{I}_\phi \wi{\mathrm{B}}_\beta=\mathrm{I}_\phi \wi{\mathrm{B}}_\beta K^{(\beta)}_t \ &\text{ on } \ \bmrm{L}(\nu_\beta) \nonumber \\
&K^{(\beta)}_t\mathrm{V}_\beta=\mathrm{V}_\beta K^\phi_t \  &\text{ on } \ \bmrm{L}(\nu_\phi) \hlabel{eq:Kbeta-Kphi}
\end{eqnarray*}
with
\begin{align}\hlabel{eq:I_phi}
\mathrm{I}_\phi f(x)=\bb{E}[f(xI_\phi)]
\end{align}
 and, for all $k\in\ZZ_+$, $\bb{E}[I^k_\phi]=\frac{k!}{W_\phi(k+1)}$. $V_\beta$ is another multiplicative Markov kernel associated with the random variable $Y_\beta$ whose law is determined by its moment sequence given, for all $k\in\ZZ_+$,  by
\begin{align*}
\bb{E}[Y^k_\beta]=\Gamma(1+\beta)\frac{W_\phi(k+1)}{\Gamma(k+1+\beta)}.
\end{align*}
Finally, we have $\wi{\mathrm B}_\beta f(x)=\frac{x^\beta}{\Gamma(\beta)}\int_0^\infty f((1+y)x)y^{\beta-1}e^{-yx}dy,  x>0,$ and $\mathrm{I}_\phi\wi{\mathrm{B}}_\beta\mathrm{V}_\beta=K^\phi_{\tau^{(\beta)}}$. Now, from Proposition~\ref{prop:inter-bdsf}, we know that
\begin{align}
\eK^\phi_t\Mphi=\Mphi\eK^{\sigma^2}_t \hlabel{eq:inter_Mphi} \ \text{ on } \ \ell^2(\mathbf{n}_{\sigma^2})
\end{align}
where $\eK^{\sigma^2}=\eK^\phi$ with $\phi(u)=\sigma^2 u$. On the other hand, from Proposition~\ref{prop:int_laguerre}, it is known that
 \begin{align}
 K^\phi_t\Lambda=\Lambda\eK^\phi_t \text{ on } \ \ell^2(\meKp)
 \end{align}
 and from \cite[Proposition 25]{miclo_patie} along with \cite[Proposition 30]{miclo_patie_2} and Lemma~\ref{tau_int}, we have
 \begin{eqnarray*}
&\eK^{\sigma^2}_t\wi{\Lambda}_{\sigma^2}=\wi{\Lambda}_{\sigma^2} K^{\sigma^2}_t \ &\text{ on } \ \bmrm{L}(\nu_{\sigma^2}) \\
&K^{\sigma^2}_td_{\frac{1}{\sigma^2}}\wi{\mathrm{B}}_\beta=d_{\frac{1}{\sigma^2}}\wi{\mathrm{B}}_\beta K^{(\beta)}_t \ &\text{ on } \ \bmrm{L}(\nu_\beta) \\
&K^{(\beta)}_t\mathrm{V}_\beta=\mathrm{V}_\beta K^\phi_t \ &\text{ on } \ \bmrm{L}(\nu_\phi)
 \end{eqnarray*}
where $\nu_{\sigma^2}$ (resp.~$\nu_\beta$) equals $\nu_\phi$ (the invariant distribution of the semigroup $K^\phi$, see Proposition~\ref{prop:int_laguerre}\eqref{it:inv}) with $\phi(u)=\sigma^2 u$ (resp.~$\phi(u)=u+\beta$), $d_\alpha f(x)=f(\alpha x)$ is the dilation operator and $\wi{\Lambda}_{\sigma^2}:\bmrm{L}(\nu_{\sigma^2})\to\ell^2(\mathbf{n}_{\sigma^2})$ is a Markov operator defined by
\begin{align*}
\wi{\Lambda}_{\sigma^2}(n,dx)=\frac{\sigma^{2(n-1)}}{(1+\sigma^2)^n}\frac{x^n}{n+1}e^{-x\left(1+\sigma^{-2}\right)}dx. \hlabel{eq:K-Kphi}
\end{align*}
By transitivity of the intertwining relation, it follows that
\begin{eqnarray}
&\eK^\phi_t\Mphi\wi{\Lambda}_{\sigma^2}=\Mphi\wi{\Lambda}_{\sigma^2} K^{\sigma^2}_t & \text{ on } \bmrm{L}(\nu_{\sigma^2}) \hlabel{eq:interweave}\\
&K^{\sigma^2}_t\Upsilon=\Upsilon\eK^\phi_t & \text{ on } \ell^2(\meKp) \hlabel{eq:interweave1}
\end{eqnarray}
where $\Upsilon=d_{\frac{1}{\sigma^2}}\wi{\mathrm{B}}_\beta\mathrm{V}_\beta\Lambda$. Now, from \eqref{eq:interweave} and \eqref{eq:interweave1}, it remains to show that $\Mphi\wi{\Lambda}_{\sigma^2}\Upsilon=\eK^\phi_{\tau_\beta}$, where $\tau_\beta$ is defined as in the proposition.
\begin{lemma}\hlabel{lem:Mphi_int}
	The operator $\mathrm{I}_\phi$ in \eqref{eq:I_phi} commutes with the dilation operator $d$. Moreover, if $\sigma^2>0$ then $d_{\sigma^2}\mathrm{I}_\phi\Lambda=\Mphi\Lambda $ on $\ell^2(\meKp)$.
\end{lemma}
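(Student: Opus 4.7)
The first claim is essentially a tautology. For any $\alpha>0$ and $f$ in the relevant function class,
\begin{equation*}
d_\alpha \mathrm{I}_\phi f(x)=\mathrm{I}_\phi f(\alpha x)=\E[f(\alpha x I_\phi)]=\E[(d_\alpha f)(x I_\phi)]=\mathrm{I}_\phi d_\alpha f(x),
\end{equation*}
so $d_\alpha$ and $\mathrm{I}_\phi$ commute on their common domain of definition.

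The second identity relies on two probabilistic inputs. The first is a simple scaling relation between the multiplicative kernels $\mathrm{I}_\phi$ and $\Mphi$: comparing the moment formula $\E[I_{\sigma_1}^k]=\sigma_1^k k!/W_\phi(k+1)$ appearing in \eqref{eq:binomial} with $\E[I_\phi^k]=k!/W_\phi(k+1)$ immediately gives $\E[I_{\sigma_1}^k]=\sigma_1^k\E[I_\phi^k]$. Since both random variables are moment-determinate (the moment growth is controlled by $W_\phi$, as used repeatedly in the paper, e.g., in the proof of Proposition~\ref{prop:mom_mu}), we conclude $I_{\sigma_1}\stackrel{(d)}{=}\sigma_1 I_\phi$, and under the standing assumption $\sigma^2>0$ we have $\sigma_1=\sigma^2$, so $I_{\sigma^2}\stackrel{(d)}{=}\sigma^2 I_\phi$.

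The second input is the classical Poisson thinning identity: if $\mathrm{Pois}(\lambda)$ is independent of $p\in[0,1]$, then conditionally on $p$, $\ttt{B}(p,\mathrm{Pois}(\lambda))\stackrel{(d)}{=}\mathrm{Pois}(p\lambda)$. By \cite[Proposition 6.7]{Patie-Savov-GeL}, when $\sigma^2>0$ the law of $I_{\sigma^2}$ is supported in $[0,1]$, so the identity applies with $p=I_{\sigma^2}$. Combining these two inputs, for $\f\in\esC_c(\ZZ_+)$ and $x>0$, after conditioning one obtains
\begin{equation*}
\Lambda\Mphi\f(x)=\E\!\left[\f\!\left(\ttt{B}(I_{\sigma^2},\mathrm{Pois}(x))\right)\right]=\E[\f(\mathrm{Pois}(x I_{\sigma^2}))]=\E[\Lambda\f(x I_{\sigma^2})].
\end{equation*}
Using the distributional identity $I_{\sigma^2}\stackrel{(d)}{=}\sigma^2 I_\phi$ turns the right-hand side into $\E[\Lambda\f(\sigma^2 x I_\phi)]=\mathrm{I}_\phi\Lambda\f(\sigma^2 x)=d_{\sigma^2}\mathrm{I}_\phi\Lambda\f(x)$, which yields the stated factorization on $\esC_c(\ZZ_+)$.

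The final step is the extension from $\esC_c(\ZZ_+)$, which is dense in $\ell^2(\meKp)$, to the whole space. This is routine once one has the boundedness of each operator in play: $\Lambda\in\mathscr B(\ell^2(\meKp),\bmrm L(\nu_\phi))$ by Proposition~\ref{prop:int_laguerre}\eqref{it:l2_int}, $\Mphi\in\mathscr B(\ell^2(\meK_{\sigma^2}),\ell^2(\meKp))$ (with an easy variant of) Proposition~\ref{prop:inter-bdsf}\eqref{it:Iphi_bdd}, and $d_{\sigma^2}\mathrm{I}_\phi$ is bounded between the appropriate weighted $\mathbf{L}^2$-spaces for $\sigma^2>0$ by a direct Jensen/Fubini estimate. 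The only step that is not purely formal is the moment-determinacy of $I_{\sigma_1}$ and $I_\phi$, but this is known from the estimates on the Bernstein-gamma function $W_\phi$ already invoked throughout the paper, so I do not anticipate any real obstacle.
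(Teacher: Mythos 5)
Your proof is correct, but it takes a genuinely different route from the paper's. The paper's argument is purely algebraic and very short: by Lemma~\ref{lem:Mphi}, $\Mphi\p_k=\frac{\sigma^{2k}k!}{W_\phi(k+1)}\p_k$, while $\Lambda\p_k=p_k$ and $\mathrm{I}_\phi p_k=\frac{k!}{W_\phi(k+1)}p_k$, so both $\Lambda\Mphi\p_k$ and $d_{\sigma^2}\mathrm{I}_\phi\Lambda\p_k$ equal $\frac{\sigma^{2k}k!}{W_\phi(k+1)}p_k$; the identity is then extended by density of $\poly=\Span\{\p_k;k\in\ZZ_+\}$ in $\ell^2(\meKp)$, which comes from the moment determinacy of $\meKp$. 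You instead give a probabilistic proof: the Poisson thinning identity $\ttt B(p,\mathrm{Pois}(\lambda))\stackrel{(d)}{=}\mathrm{Pois}(p\lambda)$ combined with the law identity $I_{\sigma^2}\stackrel{(d)}{=}\sigma^2 I_\phi$ (which the paper itself records, without the moment-determinacy detour, in the proof of Theorem~\ref{prop:inter-bdsf}, citing \cite[Proposition 6.7]{Patie-Savov-GeL}). What your route buys is an explanation of \emph{why} the factorization holds — it is exactly the statement that Binomial thinning of a Poisson variable is Poisson with thinned rate — it gives the identity pointwise for every $\f\in\esC_c(\ZZ_+)$ rather than only on the polynomial span, and it makes transparent where $\sigma^2>0$ is used (namely $I_{\sigma^2}\in[0,1]$ a.s., so that $\Mphi$ is a genuine Markov kernel and the thinning is legitimate). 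What the paper's route buys is brevity and uniformity with the surrounding moment computations. Both proofs leave the final density/boundedness extension at the same informal level, so you lose nothing there. One last remark: you prove the identity in the form $\Lambda\Mphi=d_{\sigma^2}\mathrm{I}_\phi\Lambda$, which is the form the paper actually verifies and uses in the proof of Theorem~\ref{thm:discPQ}; the ordering "$\Mphi\Lambda$" in the displayed statement of the lemma is evidently a typo, and you were right to read it the way you did.
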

\begin{proof}
 Since $\mathrm{I}_\phi$ is a multiplicative Markov kernel, commutation with the dilation operator follows readily. Now, for the intertwining relationship, by density of $\poly=\Span\{\p_k; k\in\ZZ_+\}$ in $\ell^2(\meKp)$, it suffices to show that, for all $k\in\ZZ_+$,
	\[d_{\sigma^2}\mathrm{I}_\phi\Lambda\p_k=\Lambda\Mphi\p_k.\]
	However, $\Mphi\p_k=\frac{\sigma^{2k} k!}{W_{\phi}(k+1)}$ and $d_{\sigma^2}\mathrm{I}_\phi\Lambda \p_k=d_{\sigma^2}\mathrm{I}_\phi p_k=\frac{\sigma^{2k} k!}{W_\phi(k+1)}$, which proves the lemma.
\end{proof}
Coming back to the main proof, by an application of Lemma~\ref{tau_int} and Lemma~\ref{lem:Mphi_int}, we obtain
\begin{equation}\hlabel{eq:lambda-mphi}
\Lambda\Mphi\wi{\Lambda}_{\sigma^2}\Upsilon=d_{\sigma^2}\mathrm{I}_\phi\Lambda\wi{\Lambda}_{\sigma^2}\Upsilon=d_{\sigma^2}\mathrm{I}_\phi\Lambda\wi{\Lambda}_{\sigma^2} d_{\frac{1}{\sigma^2}}\wi{\mathrm{B}}_\beta\mathrm{V}_\beta\Lambda.
\end{equation}
Again invoking \cite[Proposition 25]{miclo_patie_2}, we have $\Lambda\wi{\Lambda}_{\sigma^2}=K^{\sigma^2}_{\log\left(1+\sigma^{-2}\right)}$. Writing $\varrho=\frac{1}{2}\log(1+\sigma^{-2})$ as before, \eqref{eq:lambda-mphi} yields
\begin{align*}
\Lambda\Mphi\Upsilon=&d_{\sigma^2}\mathrm{I}_\phi K^{\sigma^2}_{\gamma}d_{\frac{1}{\sigma^2}}\wi{\mathrm{B}}_\beta\mathrm{V}_\beta\Lambda \\
=&d_{\sigma^2}\mathrm{I}_\phi d_{\frac{1}{\sigma^2}}\wi{\mathrm{B}}_\beta K^{(\beta)}_{\gamma}\mathrm{V}_\beta\Lambda \\
=&\mathrm{I}_\phi\wi{\mathrm{B}}_\beta\mathrm{V}_\beta K^\phi_{2\varrho}\Lambda \\
=& K^\phi_{\tau^{(\beta)}}K^\phi_{2\varrho}\Lambda\\
=&\Lambda\eK^\phi_{\tau_\beta}
\end{align*}
where in the last line of the above equation, we used the fact that $\tau_\beta=\tau^{(\beta)}+2\varrho$. By injectivity of $\Lambda$, it follows that $\Mphi\Upsilon=\eK^\phi_{\tau_\beta}$. This proves the proposition. \qed
\subsection{Proof of Theorem~\ref{thm:spgap_ent}\eqref{thm:entropy}}\hlabel{ss:entropy_pf} We recall that $L^{\sigma^2}$ is the generator of the self-adjoint Laguerre diffusion defined in \eqref{eq:diff_lag_gen} whose invariant distribution is $\nu_{\sigma^2}(x)dx=\frac{1}{\sigma^2}\nu(x/\sigma^2)=\frac{1}{\sigma^2} e^{-x/\sigma^2}dx, \ x>0$. Let us first prove the $\Phi$-entropy decay for $K^{\sigma^2}$, the semigroup generated by $L^{\sigma^2}$, that is, for all admissible function $\Phi$ and $f\in\bmrm[1]{L}(\nu_{\sigma^2})$ with $\Phi(f)\in\bmrm[1]{L}(\nu_{\sigma^2})$ one has
\begin{align}\hlabel{eq:entropy}
	\Ent^\Phi_{\nu_{\sigma^2}}(K^{\sigma^2}_t f)\le e^{-t}\Ent^\Phi_{\nu_{\sigma^2}}(f).
\end{align}
 In Lemma~\ref{tau_int}, we have shown that the semigroups $K^{\sigma^2}$ and $K$ are equivalent via the similarity transform induced by the dilation operator $d_{\sigma^2}$. We first claim that it is enough to prove the exponential entropy decay in \eqref{eq:entropy} replacing $K^{\sigma^2}$ by $K$. To see why, we note that for any $\sigma^2>0$, $f\in\bmrm[1]{L}(\nu_{\sigma^2})$ with $\Phi(f)\in\bmrm[1]{L}(\nu_{\sigma^2})$, one has by the change of variable along with Lemma~\ref{tau_int} that,
 \begin{align*}
 	\int_0^\infty\Phi(K^{\sigma^2}_t f(x))\nu_{\sigma^2}(x) dx &=\int_0^\infty \frac{1}{\sigma^2} \Phi\left(K_t d_{\sigma^2} f\left(\frac{x}{\sigma^2}\right)\right)\nu\left(\frac{x}{\sigma^2}\right)dx \\
 	&=\int_0^\infty \Phi(K_t d_{\sigma^2} f(x))\nu(x) dx.
 \end{align*}
 We also observe by the change of vairable that for any $f\in\bmrm[1]{L}(\nu_{\sigma^2}$), one has  $\Phi(\nu_{\sigma^2} f)=\Phi(\nu d_{\sigma^2} f)$. As a result, we have
 \begin{align*}
 	\Ent^\Phi_{\nu_{\sigma^2}}(K^{\sigma^2}_t f)=\Ent^\Phi_{\nu}(K_t d_{\sigma^2} f)
 \end{align*}
which proves our claim. Next, we state the following result regarding the exponential entropy decay of the semigroup $K$ generated by $L$.
\begin{lemma}\hlabel{lem:ent_lag}
	For any $\Phi$ as above and $f\in\bmrm[1]{L}(\nu)$ with $\Phi(f)\in\bmrm[1]{L}(\nu)$, one has
	\begin{align*}
		\Ent^\Phi_\nu(K_tf)\le e^{-t}\Ent^\Phi_\nu(f).
	\end{align*}
\end{lemma}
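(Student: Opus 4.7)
\textbf{Plan for Lemma~\ref{lem:ent_lag}.} I would prove this classical $\Phi$-entropy decay estimate via Bakry--\'Emery's $\Gamma$-calculus together with Chafa\"i's $\Phi$-entropy inequality, which is precisely the framework singled out by the admissibility condition \eqref{eq:admissible} (the convexity of both $\Phi$ and $-1/\Phi''$ is exactly what Chafa\"i identifies as the natural hypothesis on $\Phi$). The scheme splits into: (i) a curvature--dimension computation for $L$, (ii) the resulting $\Phi$-entropy functional inequality, and (iii) a Gr\"onwall argument using the standard dissipation identity.

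First, from $L=x\partial_x^2+(1-x)\partial_x$ a direct computation via $\Gamma(f):=\tfrac{1}{2}(L(f^2)-2fLf)$ yields $\Gamma(f)=x(f')^2$. I would then compute the iterated carr\'e du champ $\Gamma_2(f):=\tfrac{1}{2}(L\Gamma(f)-2\Gamma(f,Lf))$ through a routine Leibniz expansion to obtain
\begin{equation*}
\Gamma_2(f)=x^2(f'')^2+xf'f''+\tfrac{1+x}{2}(f')^2.
\end{equation*}
Subtracting $\tfrac{1}{2}\Gamma(f)=\tfrac{1}{2}x(f')^2$ leaves $x^2(f'')^2+xf'f''+\tfrac{1}{2}(f')^2$, which viewed as a quadratic in $f''$ has discriminant $x^2(f')^2-2x^2(f')^2=-x^2(f')^2\leq 0$ and is therefore pointwise nonnegative. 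Hence $L$ satisfies the curvature--dimension bound $\mathrm{CD}(\tfrac{1}{2},\infty)$.

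Chafa\"i's theorem on $\Phi$-entropies (see \cite[Ch.~5]{Bakry_Book} and references therein) then converts $\mathrm{CD}(\tfrac{1}{2},\infty)$ plus the admissibility of $\Phi$ into the functional inequality
\begin{equation*}
\ent^\Phi_\nu(g)\leq \int_0^\infty \Phi''(g)\,\Gamma(g)\,d\nu, \qquad g:\RR_+\to I,
\end{equation*}
for a sufficiently regular $g$. Independently, the standard integration by parts against $\nu$ (using self-adjointness of $L$ in $\mathbf L^2(\nu)$) yields the dissipation identity
\begin{equation*}
\frac{d}{dt}\ent^\Phi_\nu(K_tf)=-\int_0^\infty \Phi''(K_tf)\,\Gamma(K_tf)\,d\nu.
\end{equation*}
Feeding the first inequality applied to $g=K_tf$ into the second gives $\frac{d}{dt}\ent^\Phi_\nu(K_tf)\leq -\ent^\Phi_\nu(K_tf)$, and Gr\"onwall closes the argument.

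The computations above are elementary; the delicate point I expect to be the main obstacle is regularity and density, namely justifying the functional inequality and the dissipation identity under the weak assumption $f,\Phi(f)\in\mathbf L^1(\nu)$. I would handle this by first establishing the chain of inequalities on the core of polynomials (which is dense and stable under $L$, as already invoked in Lemma~\ref{tau_int}), then extending by a standard truncation/approximation procedure that bypasses the potential non-Lipschitz behaviour of $\Phi$ near the boundary of $I$; the ergodicity of $K$ together with the Feller property ensures all relevant limits pass through.
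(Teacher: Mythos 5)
Your proposal follows essentially the same route as the paper: the paper likewise reduces the decay to the $\Phi$-entropy inequality $\ent^\Phi_\nu(f)\le\nu(\Phi''(f)\Gamma(f))$ via \cite{gentil}, invokes \cite[Theorem~2.1(2)]{chafai} to identify this with the curvature--dimension condition $CD(\tfrac{1}{2},\infty)$, and then cites \cite[Section~2.7.3]{Bakry_Book} for that condition. The only difference is that you carry out the $\Gamma_2$ computation explicitly (correctly, as one checks) where the paper cites it, and you flag the regularity/density issue which the paper passes over silently.
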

\begin{proof}
	Since $L$ is a diffusion operator, from \cite[Equations (6) and (7)]{gentil}, it suffices to show that for an admissible function $\Phi$ and $f\in\bmrm[1]{L}(\nu)$ with $\Phi(f)\in\bmrm[1]{L}(\nu)$, one has the following $\Phi$-entropy inequality
	\begin{align}\hlabel{eq:CD}
		\ent^\Phi_{\mu}(f)\le \mu(\Phi''(f)\Gamma(f))
	\end{align}
where $\Gamma$ is the carr\'e-du-champ operator, see \cite[Section~1.4.2]{Bakry_Book} associated to $L$, that is, for smooth funcitons
\[\Gamma(f)=L(f^2)-2f Lf.\]
From \cite[Theorem~2.1(2)]{chafai} it follows that \eqref{eq:CD} is equivalent to the fact that the operator $L$ satisfies the curvature dimension condtion $CD(\frac{1}{2},\infty)$, which is indeed true from \cite[Section 2.7.3]{Bakry_Book}. Hence the lemma is proved.
\end{proof}
Now coming back to the proof of Theorem~\ref{thm:spgap_ent}\eqref{thm:entropy}, due to the interweaving relation in Theorem~\ref{thm:discPQ} and the estimate in \eqref{eq:entropy}, the proof of this theorem follows directly from \cite[Theorem~8]{miclo_patie_2}. \qed

\subsection{Proof of Theorem~\ref{thm:hyper}}\hlabel{ss:hyper_pf} For ergodic self-adjoint diffusion semigroups, we know from \cite[Theorem~5.2.3]{Bakry_Book} that the hypercontractivity can be interpreted in terms of the log-Sobolev constants corresponding to the semigroups. Let us consider the self-adjoint Laguerre semigroup $K^{\sigma^2}$ defined in Proposition~\ref{thm:discPQ}. For this semigroup, the invariant distribution is $\nu_{\sigma^2}(x)dx=\frac{1}{\sigma^2}e^{-x/\sigma^2}dx , \ x>0,$ and the log-Sobolev constant is
\begin{align}\hlabel{eq:log_sob}
	c_{LS}=\inf_{f\in\esC^1_b(\bb{R}_+):\|f\|_{\bmrm{L}(\nu_{\sigma^2})}=1}\frac{4\int_{\bb{R}_+}xf'(x)^2\nu_{\sigma^2}(dx)}{\int_{\bb{R}_+}f(x)^2\log(f(x)^2)\nu_{\sigma^2}(dx)}.
\end{align}
The numerator in the above expression is four times the Dirichlet energy associated to $L^{\sigma^2}$ defined by
\[\cc{E}(f,f)=-\langle L^{\sigma^2} f,f\rangle_{\nu}=\int_{\bb{R}_+}xf'(x)^2\nu_\r(dx).\] It was shown by Bakry \cite{Bakry} that $c_{LS}=1$. Hence, by applying \cite[Theorem~5.2.3]{Bakry_Book}, we infer that for all $t\ge 0$,
\[|\!|\!|K^{\sigma^2}_t|\!|\!|_{\bmrm{L}(\nu_{\!\sigma^2})\to\bmrm[p(t)]{L}(\nu_{\sigma^2})}\le 1\]
where $p(t)=1+e^t$ and $\nu(dx)=e^{-x} dx, \ x>0$. Having the above hypercontractivity estimate, the rest of the proof follows from \cite[Theorem~9]{miclo_patie_2} and Theorem~\ref{thm:discPQ}. \qed

\subsection{Proof of Theorem~\ref{thm:subordination}} \label{sec:thlast} First, we note that the semigroup $\eK^{\phi,\tau_\beta}$ has the same invariant distribution $\meKp$ as $\eK^\phi$. Let us recall that for $\sigma^2>0$, $\varrho=\frac{1}{2}\log(1+\sigma^{-2})$. If $t>\frac{1}{2}$, Theorem~\ref{thm:eig_coeig}\eqref{it:spectral_exp} entails that, for all $s>0$ and $\f\in\ell^2(\meKp)$, we have
\begin{align}\hlabel{eq:last}
\eK^\phi_{s+2{\varrho} t} \f=\sum_{k=0}^\infty e^{-2k{\varrho} t}e^{-ks}\langle \f,\eigLdd_k\rangle_{\meKp}\eigLd \ \text{ in } \ \ell^2(\meKp).
\end{align}
For $t\ge 0$, let us define the random variable $\widetilde{\tau}_\beta(t)$ such that $\tau_\beta(t)=2{\varrho} t+\widetilde{\tau}_\beta(t)$. Indeed, from \eqref{eq:tau_beta1} it follows that for all $t\ge 0$,
\begin{align*}
	\log\bb{E}\left[e^{-u\widetilde{\tau}_\beta(t)}\right]=-\log\left(\frac{\Gamma(u+\beta+1)}{\Gamma(1+\beta)\Gamma(u+1)}\right).
\end{align*}
Then, integrating both sides of \eqref{eq:last} with respect to $\bb{P}(\widetilde{\tau}_\beta(t)\in ds)$ with $t>\frac{1}{2}$ we obtain
\begin{align*}
	\eK^{\phi,\tau_\beta}_t \f=\int_0^\infty \eK^\phi_s\f\:\bb{P}(\tau_\beta(t)\in ds)&=\int_0^\infty \eK^\phi_{s+2{\varrho} t}\f\:\bb{P}(\widetilde{\tau}_\beta(t)\in ds)\\
	=&\int_0^\infty\left(\sum_{k=0}^\infty e^{-2k{\varrho} t} e^{-ks}\langle \f,\eigLdd_k\rangle_{\meKp}\eigLd\right)\bb{P}(\widetilde{\tau}_\beta(t)\in ds)\\
	=&\sum_{k=0}^\infty e^{-2k{\varrho} t} \langle \f,\eigLdd_k\rangle_{\meKp}\eigLd\int_0^\infty e^{-ks}\bb{P}(\widetilde{\tau}_\beta(t)\in ds)
\end{align*}
where the last equality follows due to Fubini theorem with the help of the estimates $\|\eigLd\|_{\ell^2(\meKp)}\le 1, \ \|\eigLdd_k\|_{\ell^2(\meKp)}\le  C_\ep e^{k({\varrho}+\ep)}$ for arbitrary $\ep>0$ and $k\in\ZZ_+$, see Proposition~\ref{prop:eigen} and the proof of Theorem~\ref{thm:eig_coeig}\eqref{it:spectral_exp}. The proof of this item is concluded by recalling that, for all $k\in\ZZ_+$,
\[e^{-2{\varrho} t}\int_0^\infty e^{-ks}\bb{P}(\widetilde{\tau}_\beta(t)\in ds)=e^{-t\phi_\beta(k)}.\]
For the next item, applying Jensen's inequality we observe that, for all $\beta, t>0$,
\begin{align*}
	\Ent^\Phi_{\meKp}\left(\eK^{\phi,\tau_\beta}_{t+1} \f\right)\le \int_0^\infty \Ent^\Phi_{\meKp}\left(\eK^\phi_{s+\tau_\beta}\f\right)\bb{P}(\tau_\beta(t)\in ds).
\end{align*}
Using Theorem~\ref{thm:entropy}, when $\sigma^2>0$ and $\beta>\mathrm{m}_\phi$,  and with the right-hand side of the above inequality is bounded above by
\[\int_0^\infty e^{-s}\Ent^\Phi_{\meKp}(\f)\bb{P}(\tau_\beta(t)\in ds)=e^{-t\phi_\beta(1)}\Ent^\Phi_{\meKp}(\f).\] 
This proves \eqref{it:sbo_ent}. Finally, with $\tau_\beta=\tau_\beta(1)$ chosen independent of $(\tau_\beta(t))_{t\geq0}$ which we recall is a subordinator, we have,  by the triangle inequality, that, for any $\f\in\ell^{2}(\meKp)$,
 \begin{align*}	\left\|\eK^{\phi,\tau_\beta}_{t+1}\f\right\|_{\ell^{p(\alpha t)}(\meKp)}&=\left\|\int_0^\infty\eK^{\phi}_{s+\tau_\beta}\f \ \bb{P}(\tau_\beta(t)\in ds)\right\|_{\ell^{p(\alpha t)}(\meKp)}\\
 	&=\left\|\int_0^\infty\eK^{\phi}_{s+2\varrho t+\tau_\beta}\f \ \bb{P}(\widetilde{\tau}_\beta(t)\in ds)\right\|_{\ell^{p(\alpha t)}(\meKp)} \\
 	& \le\int_0^\infty \left\|\eK^\phi_{s+2\varrho t+\tau_\beta}\f\right\|_{\ell^{p(2\varrho t)}(\meKp)}\bb{P}(\widetilde{\tau}_\beta(t)\in ds).
\end{align*}
Invoking Theorem~\ref{thm:hyper}, the right-hand side of the above inequality is bounded above by \[\|\f\|_{\ell^2(\meKp)}\int_0^{\infty}\bb{P}(\widetilde{\tau}_\beta(t)\in ds)=\|\f\|_{\ell^2(\meKp)}\] which completes the proof.

\bibliographystyle{plain}

\end{document}